\documentclass[final]{siamltex}
\usepackage[active]{srcltx}
\usepackage[all]{xy}
\usepackage{subfig}
\usepackage{hyperref}
\setlength{\textwidth}{16cm}
\setlength{\oddsidemargin}{0pt}
\setlength{\evensidemargin}{0pt}

\usepackage{amssymb,latexsym, amsmath}
\usepackage{mathrsfs}
\usepackage{graphics}
\usepackage{latexsym}
\usepackage{psfrag}
\usepackage[dvips]{graphicx}
\usepackage{color}
\usepackage{pifont,marvosym}
\usepackage{mathbbol}
\usepackage{enumerate}
\usepackage{enumitem}

\newtheorem{assumption}{Assumption}
\newtheorem{remark}{Remark}

%


%

\newcommand{\R}{\mathbb{R}}
\newcommand{\N}{\mathbb{N}}

\newcommand{\Id}{\mathbb{I}}

\newcommand{\gr}{\textrm{graph}}

\newcommand{\haus}{\mathcal{H}}
\newcommand{\ve}{\varepsilon}

\newcommand{\erre}{\mathbb{R}}
\newcommand{\enne}{\mathbb{N}}

\newcommand{\f}{\varphi}

\newcommand{\lip}{\textrm{Lip}}

\title{Optimal transport with branching distance costs and the obstacle problem}

\author{Fabio Cavalletti \thanks{SISSA, via Beirut 2, IT-34014 Trieste (ITALY)},({\tt cavallet@sissa.it}). }

\begin{document}

\maketitle

\begin{abstract}
We address the Monge problem in metric spaces with a geodesic distance: $(X,d)$ is a Polish space and 
$d_N$ is a geodesic Borel distance which makes $(X,d_N)$ a possibly branching geodesic space. 
We show that under some assumptions on the transference plan we can reduce the transport problem to transport problems along family of geodesics.
We introduce three assumptions on a given $d_{N}$-monotone transference plan $\pi$ which imply respectively:
strongly consistency of disintegration, continuity of the conditional probabilities of the first marginal 
and a regularity property for the geometry of chain of transport rays. 
We show that this regularity is sufficient for the construction of a transport map with the same transport cost of $\pi$.

We apply these results to the Monge problem in $\erre^{d}$ with smooth, convex and compact obstacle obtaining the existence of an optimal map provided
the first marginal is absolutely continuous w.r.t. the $d$-dimensional Lebesgue measure.
\end{abstract}

\begin{keywords} 
optimal transport, Monge problem, branching spaces 
\end{keywords}

\begin{AMS}
49J52, 28C20
\end{AMS}

\bibliographystyle{plain}

\section{Introduction}
\label{S:intro}

This paper concerns the Monge minimization problem in metric spaces with geodesic structure:
given two Borel probability measure $\mu,\nu \in \mathcal{P}(X)$, where $(X,d)$ is a Polish space, i.e. complete and separable metric space, we study the minimization of the functional
\[ 
\mathcal{I}(T) = \int d_{N}(x,T(x)) \mu(dy)  
\]
where $T$ varies over all Borel maps $T:X \to X$ such that $T_{\sharp}\mu = \nu$ and 
$d_{N}$ is a Borel distance that makes $(X,d_{N})$ a possibly branching geodesic space.
We will apply the results to the obstacle problem: let $C \subset \erre^{d}$ be a convex set with $\partial C = M$ smooth, 
$(d-1)$-dimensional compact submanifold of $\erre^{d}$. 
Let $X = (\erre^{d} \setminus C) \cup M$, $\mu,\nu \in \mathcal{P}(X)$ and $d_{M}(x,y)$ be the infimum among  
all the Lipschitz curves in $X$ connecting to $x$ to $y$ of the euclidean length of such curves. We will prove the existence of a solution for
\[
\min_{T : T_{\sharp} \mu = \nu} \int d_{M}(x,T(x)) \mu(dx),
\] 
provided $\mu \ll \mathcal{L}^{d}$.



Before describing our investigation, we present a little bit of the existing literature referring to \cite{villa:topics} and \cite{villa:Oldnew} 
for a deeper insight into optimal transportation.

In the original formulation given by Monge in 1781 the problem was settled in $\erre^{d}$, 
with the cost given by the Euclidean norm and the measures $\mu, \nu$ were supposed to be absolutely 
continuous and supported on two disjoint compact sets.
The original problem remained unsolved for a long time. 
In 1978 Sudakov \cite{sudak} claimed to have a solution for any distance cost function induced 
by a norm: an essential ingredient in the proof was that if $\mu \ll \mathcal{L}^{d}$ and $\mathcal{L}^{d}$-a.e. $\erre^{d}$ can be decomposed into 
convex sets of dimension $k$, then
then the conditional probabilities are absolutely continuous with respect to the $\haus^{k}$ measure of the correct dimension. 
But it turns out that when $d>2$, $0<k<d-1$ the property claimed by Sudakov is not true. An example with $d=3$, $k=1$ can be found in \cite{larm}.

The Euclidean case has been correctly solved only during the last decade. L. C. Evans and W. Gangbo in \cite{evagangbo} 
 solved the problem under the assumptions that 
$\textrm{spt}\,\mu \cap  \textrm{spt}\,\nu = \emptyset$,  $\mu,\nu \ll \mathcal{L}^{d}$ and their densities are Lipschitz function with compact support.
The first existence results for general absolutely continuous measures $\mu,\nu$ with compact support have been independently obtained by 
L. Caffarelli, M. Feldman and R.J. McCann in \cite{caffafeldmc} and by N. Trudinger and X.J. Wang in \cite{trudiwang}. 
Afterwards M. Feldman and R.J. McCann \cite{feldcann:mani} extended the results to manifolds with geodesic cost. 
The case of a general norm as cost function on $\erre^{d}$, including also the case with non strictly convex unitary ball, 
 has been solved first in the particular case of crystalline norm by L. Ambrosio, B. Kirchheim and A. Pratelli in 
\cite{ambprat:crist}, and then in fully generality by T. Champion and L. De Pascale in \cite{champdepasc:Monge}. 
The case with $(X,d_{N})$ non-branching geodesic space has been studied by S. Bianchini and the author in \cite{biacava:streconv}.

Concerning optimal transportation around a convex obstacle,
it is worth noting that the case with transport cost $d_{M}^{2}$ is studied in \cite{jimenez:obstacle}.

\subsection{Overview of the paper}
\label{Ss:over}

Let $(X,d_N)$ be a geodesic space, not necessarily Polish. To assure that standard measure theory can be used, 
there exists a second distance $d$ on $X$ which makes $(X,d)$ Polish and $d_N$ Borel on $X \times X$ with respect to the metric $d \times d$. 
We will prove that given a $d_N$-cyclically monotone transference plan $\pi \in \Pi(\mu,\nu)$,  
under appropriate assumptions on the first marginal and on the plan $\pi$, there exists an admissible 
map $T:X \to X$ with the same transference cost of $\pi$. 
Since we do not require $d_N$ to be l.s.c., 
the existence of an optimal transference plan is not guaranteed and our strategy doesn't rely on a possible optimality of $\pi$.
Moreover it is worth notice that due to the lack of regularity of $d_{N}$ we will not use the existence of optimal potentials $(\phi,\psi)$.

Our strategy to cope with the Monge problem with branching distance cost is the following: 
\begin{enumerate}
\item reduce the problem, via Disintegration Theorem, to transportation problems in sets where, 
under a regularity assumption on the first marginal and on $\pi$, we know how to produce an optimal map;
\item show that the disintegration of the first marginal $\mu$ on each of this sets verifies this regularity assumption;
\item find a transport map on each of these sets and piece them together.
\end{enumerate}

In the easier case of $d_{N}$ non-branching, given a $d_{N}$-cyclically monotone transference plan it is always possible 
to reduce the problem on single geodesics. The reduced problem becomes essentially one dimensional and there 
the precise regularity assumption is that the first marginal has no atoms (is continuous).

If $d_{N}$ is a branching geodesic distance this reduction can't be done anymore 
and there is not another reference set where the existence of Monge minimizer is known.  
The reduction set will be a concatenation of more geodesics and  to produce an optimal map 
we will need a regularity assumption also on the shape of this set.

As in the non-branching case, the reduction sets come from the class of geodesics used by a $d_{N}$-monotone plan $\pi$. 
This class can be obtained 
from a $d_{N}$-cyclical monotone set $\Gamma$ on which $\pi$ is concentrated: one can construct the set of transport rays $R$,
the transport set $\mathcal{T}_{e}$, i.e. the set of geodesics used by $\pi$, and from them construct
\begin{itemize}
\item the set $\mathcal T$ made of inner points of geodesics,
\item the set $a \cup b := \mathcal T_e \setminus \mathcal T$ of initial points $a$ and end points $b$.
\end{itemize}

Since branching of geodesics is admitted, $R$ is not a partition on $\mathcal{T}$. To obtain an equivalence relation 
we have to consider the set $H$ of chain of transport rays:
it is the set of couples $(x,y)$ such that we can go from $x$ to $y$ with a 
finite number of transport rays such that their common points are not final or initial points. 
 
Hence $H$ will provide the partition of the transport set $\mathcal{T}$ and each equivalence class, $H(y)$ for $y$ in the quotient space, will be a reduction set.

Even if a partition is given, the reduction to transport problems on the equivalence classes is not straightforward: a necessary and sufficient condition is that the disintegration of the measure $\mu$ 
w.r.t. the partition $H$ is  strongly consistent.
This is equivalent to the fact that there exists a $\mu$-measurable quotient map $f : \mathcal T \to \mathcal T$ of the equivalence relation induced by the partition. \\
Since this partition is closely related to the geodesics of $d_{N}$, the strong consistency will follow from 
a topological property of the geodesic as set in $(X,d)$ and from a metric property of $d_{N}$ as a function: 
\begin{enumerate}[label=(1.\alph*),ref=(1.\alph*)]
\item \label{I:comegeo} each chain of transport rays $H(y)$ restricted to a $d_{N}$ closed ball is $d$-closed; 
\item \label{I:comelocpt} $d_{N}(x,\cdot)$ restricted to $H(x)$ is bounded on $d$-bounded sets. 
\end{enumerate}
Observe that these conditions on $H$ and $d_{N}$ are the direct generalization of the ones on geodesics used in \cite{biacava:streconv}
(continuity and local compactness) and they depend on the particular choice of the transference plan.
This assumptions permit to disintegrate $\mu$ restricted to $\mathcal T$. 
Hence one can write
\[
 \mu\llcorner_{\mathcal{T}} = \int \mu_y m(dy), \quad m := f_\sharp \mu,  \quad \mu_y(f^{-1}(y)) = 1,
\]
i.e. the conditional probabilities $\mu_y$ are concentrated on the counterimages $f^{-1}(y)$ (which is an equivalence class). 
The reduced problems are obtained by disintegrating $\pi$ w.r.t. the partition $H \times (X \times X)$,
\[
\pi\llcorner_{\mathcal{T}\times \mathcal{T}} = \int \pi_y m(dy), \quad \nu = \int \nu_y m(dy) \quad \nu_y := (P_2)_\sharp \pi_y,
\]
and considering the problems on the sets $H(y)$ with marginals $\mu_y$, $\nu_y$ and cost $d_{N}$. 

To next step is study the continuity of the conditional probabilities $\mu_y$
and whether $\mu\llcorner_{\mathcal{T}_{e}}=\mu\llcorner_{\mathcal{T}}$ holds true.
To pursue this aim we consider a natural operation on sets: the translation along geodesics. If $A$ is a subset of $\mathcal T$, we denote by $A_t$ the set translated by $t$ in the direction determined by $\pi$. A rigorous definition of the translation of sets along geodesic will be given during the paper.
It turns out that $\mu(a \cup b) = 0$ and the continuity of $\mu_y$ both depend on how the function $t \mapsto \mu(A_t)$ behaves. 
Indeed assuming that: 
\begin{enumerate}[label=(2)]
\item \label{I:NDEatom} for all $A$ Borel there exists a sequence $\{t_{n}\}\subset \erre$ and $C>0$ such that $\mu(A_{t_{n}})\geq C \mu(A)$ as $t_{n}\to 0$,
\end{enumerate}
we have the following.

\begin{theorem}[Proposition \ref{P:puntini} and Proposition \ref{P:nonatoms}]
\label{T:-1}
If Assumption \ref{I:NDEatom} holds,
then $\mu(a \cup b) = 0$ and the conditional probabilities $\mu_y$ are continuous.
\end{theorem}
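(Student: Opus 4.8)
I would split the proof into the two assertions of the theorem, treating each via a study of the set-translation map $t\mapsto \mu(A_t)$, which is where Assumption~\ref{I:NDEatom} enters.

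First, for $\mu(a\cup b)=0$: the key point is that initial points $a$ and end points $b$ are, by construction, the points of each chain $H(y)$ that cannot be translated (forward, resp. backward) for any positive time. So I would fix a small $\delta>0$ and consider the set $A_{a,\delta}$ of points that can be translated backward by less than $\delta$ (a neighbourhood of $a$ inside the transport set); its forward translates $(A_{a,\delta})_t$ for $t\in(0,\delta)$ are pairwise disjoint once one quotients out by $H$, or at least have controlled overlap, because along a single geodesic translation is injective. Using Assumption~\ref{I:NDEatom} one gets a sequence $t_n\to 0$ with $\mu((A_{a,\delta})_{t_n})\ge C\mu(A_{a,\delta})$; combining the disjointness of translates at comparable scales with the fact that $\mu$ is a finite measure forces $\mu(A_{a,\delta})=0$, and letting $\delta\downarrow 0$ (so $A_{a,\delta}\downarrow a$) gives $\mu(a)=0$, and symmetrically $\mu(b)=0$. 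Here I am assuming the machinery of the paper makes ``translate a Borel set along the geodesics selected by $\pi$'' a Borel and measure-compatible operation, and that on each single geodesic the translation is a measure-preserving-up-to-reparametrisation bijection, which should be part of the earlier development (this is exactly the analogue of the non-branching argument in \cite{biacava:streconv}).

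Second, for continuity of the conditionals $\mu_y$: since by the first part $\mu\llcorner_{\mathcal T_e}=\mu\llcorner_{\mathcal T}$, the disintegration $\mu\llcorner_{\mathcal T}=\int \mu_y\, m(dy)$ is carried by the chains $H(y)$, each of which is a concatenation of geodesics parametrised by an interval. Continuity of $\mu_y$ means $\mu_y$ has no atoms. Suppose for contradiction that on a set of $y$ of positive $m$-measure the conditional $\mu_y$ has an atom, say at a point depending measurably on $y$; by a measurable-selection argument I would collect these atoms into a single Borel set $A\subset\mathcal T$ with $\mu(A)>0$ which meets each chain in at most one point. Then every translate $A_t$ again meets each chain in at most one point, so the $A_t$ for distinct $t$ in a small interval are pairwise disjoint (two of them sharing a point would force that point to have two distinct geodesic-coordinates). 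Finitely many disjoint sets each of $\mu$-measure $\ge C\mu(A)>0$ is impossible in a probability space, but Assumption~\ref{I:NDEatom} produces arbitrarily many such translates $A_{t_n}$ with $t_n\to0$ — contradiction. Hence $\mu$-a.e.\ $\mu_y$ is non-atomic, i.e.\ continuous.

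**Main obstacle.** The delicate part is not the contradiction-by-disjointness scheme, which is soft, but justifying that the translation operation behaves well enough: that $A_t$ is $\mu$-measurable when $A$ is, that translation commutes suitably with the disintegration so that ``$A$ meets $H(y)$ in one point'' is preserved, and — most importantly — that the translates at scales $t_n\to 0$ are genuinely (essentially) disjoint rather than merely distinct, given that geodesics may branch. Branching is precisely what could make two forward-translates overlap on a branch even though the original sets met each chain once; controlling this requires restricting to the inner transport set $\mathcal T$ where the chain-structure $H$ is a genuine partition, and using that $H(y)$ restricted to a $d_N$-ball is $d$-closed (Assumption~\ref{I:comegeo}) together with local boundedness (Assumption~\ref{I:comelocpt}) to keep everything inside one equivalence class under small translations. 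I expect the bulk of the real work in Propositions~\ref{P:puntini} and~\ref{P:nonatoms} to be exactly this bookkeeping about translations, with Assumption~\ref{I:NDEatom} invoked only at the very end of each argument.
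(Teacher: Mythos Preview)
Your argument for the continuity of $\mu_y$ is the paper's: select one atom per equivalence class into a Borel set $\hat A$ (the paper uses Lusin's theorem to extract a Borel graph), observe that its translates by distinct $t_n$ are pairwise disjoint because a point on $H(y)$ cannot sit at two different $d_N$-distances from the single selected atom on that class, and contradict finiteness of $\mu$. This is exactly Proposition~\ref{P:nonatoms}.

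Your argument for $\mu(a\cup b)=0$, however, does not go through. The neighbourhood $A_{a,\delta}$ is a \emph{thick} set---an interval of length $\delta$ along each geodesic---so its translates $(A_{a,\delta})_t$ for $t\in(0,\delta)$ are not pairwise disjoint, and the hedge ``controlled overlap'' does not rescue the infinite-sum contradiction: restricting to a $\delta$-separated subsequence of times yields only $O(1/\delta)$ disjoint translates while $\mu(A_{a,\delta})$ is itself expected to be of order $\delta$, so nothing blows up. Taking $A=a$ directly does not help either, since in a branching space a point $z$ may lie at different $d_N$-distances from two distinct initial points, so $a_{t}\cap a_{t'}$ need not be empty. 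In fact the intermediate conclusion $\mu(A_{a,\delta})=0$ that you aim for is generically false.

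The paper's Proposition~\ref{P:puntini} sidesteps this by using a \emph{single} translate together with a shrinking $d_N$-neighbourhood. Take a $d$-closed $\hat A\subset \mathcal T_e\setminus\mathcal T$ of positive measure; any translate $\hat A_{t_n}$ with $t_n\neq 0$ lands in $\mathcal T$ (an endpoint, once moved along the ray, is no longer an endpoint), so $\hat A_{t_n}\cap\hat A=\emptyset$. Both sets sit inside $\hat A^\varepsilon=\{x:d_N(\hat A,x)<\varepsilon\}$ once $|t_n|\le\varepsilon$, giving $\mu(\hat A^\varepsilon)\ge(1+C)\mu(\hat A)$ for every $\varepsilon>0$; letting $\varepsilon\downarrow 0$ yields the contradiction $\mu(\hat A)\ge(1+C)\mu(\hat A)$. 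The translation bookkeeping you anticipated is confined to the $\mathcal A$-measurability Lemma~\ref{L:measumuAt}; the two propositions themselves are short.
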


At this level of generality we don't know how to obtain a $d_{N}$-monotone admissible map 
for the restricted problem even if the marginal $\mu_{y}$ satisfies some regularity assumptions.
Therefore we need to assume that $H(y)$ has a particular structure: 

\begin{enumerate}[label=(3), ref=(3)] 
\item \label{I:clessidra3}  for $m$-a.e. $y$, the chain of transport rays $H(y)$ is contained, up to set of $\mu_{y}$-measure zero, in an uncountable ``increasing'' family of measurable sets. 
\end{enumerate}
A rigorous formulation of Assumption \ref{I:clessidra3} and of ``increasing'' will be given during the paper.
If $H(y)$ satisfies Assumption \ref{I:clessidra3}, then we can perform a disintegration of $\mu_{y}$ with respect to the partition induced 
by the uncountable ``increasing''  family of sets.
Then if the quotient measure and the marginal measures of $\mu_{y}$ are continuous, 
we prove the existence of an optimal map between $\mu_{y}$ and $\nu_{y}$.

\begin{theorem}[Proposition \ref{P:alternativa} and Theorem \ref{T:finale}]\label{T:0}
Let $\pi\in \Pi(\mu,\nu)$ be a $d_{N}$-monotone plan concentrated on a set $\Gamma$.
Assume that Assumptions \ref{I:comegeo}, \ref{I:comelocpt}, \ref{I:NDEatom}, \ref{I:clessidra3} holds and 
that the quotient measure and the marginal measures of $\mu_{y}$ are continuous for $m$-a.e. $y$.
Then  there exists an admissible map with the same transference cost of $\pi$.
\end{theorem}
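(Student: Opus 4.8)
\emph{Step 1: reduction to the classes $H(y)$ and gluing.} We follow the three-step scheme of the overview. Fix the $d_N$-monotone plan $\pi$ concentrated on $\Gamma$, with transport set $\mathcal T_e\supset\mathcal T$, the set $a\cup b=\mathcal T_e\setminus\mathcal T$ of initial/final points, and the partition $H$ of $\mathcal T$ into chains of transport rays. By Assumptions \ref{I:comegeo}, \ref{I:comelocpt} the partition $H$ admits a $\mu$-measurable quotient map $f:\mathcal T\to\mathcal T$, so the disintegration
\[
\mu\llcorner_{\mathcal T}=\int \mu_y\, m(dy),\qquad m=f_\sharp\mu,\qquad \mu_y\bigl(f^{-1}(y)\bigr)=1,
\]
is strongly consistent. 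Disintegrating $\pi$ along $H\times(X\times X)$ gives $\pi\llcorner_{\mathcal T\times\mathcal T}=\int\pi_y\,m(dy)$, with $\pi_y\in\Pi(\mu_y,\nu_y)$, $\nu_y=(P_2)_\sharp\pi_y$, and each $\pi_y$ is $d_N$-monotone, being concentrated on $\Gamma\cap(H(y)\times H(y))$. Assumption \ref{I:NDEatom} and Theorem \ref{T:-1} give $\mu(a\cup b)=0$, so $\mu\llcorner_{\mathcal T_e}=\mu\llcorner_{\mathcal T}$, and $\mu_y$ is continuous for $m$-a.e.\ $y$. Since $\pi$ leaves fixed the part of $\mu$ outside $\mathcal T_e$ (contributing the identity at zero cost), it suffices to produce, for $m$-a.e.\ $y$, a Borel map $T_y$ with $(T_y)_\sharp\mu_y=\nu_y$ and $\int d_N(x,T_y(x))\,\mu_y(dx)=\int d_N\,d\pi_y$, depending $m$-measurably on $y$; then $T(x):=T_{f(x)}(x)$ on $\mathcal T$, extended by the identity, is admissible with $\mathcal I(T)=\int d_N\,d\pi$. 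Joint measurability of $y\mapsto T_y$ is not an extra hypothesis: the constructions below are canonical (disintegrations and one-dimensional monotone rearrangements), so a measurable selection argument of Lusin/von Neumann type yields a jointly measurable choice.

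\emph{Step 2: the transport problem on a single $H(y)$ (Proposition \ref{P:alternativa} and Theorem \ref{T:finale}).} Fix $y$. By Assumption \ref{I:clessidra3}, up to a $\mu_y$-null set $H(y)$ is foliated by an uncountable increasing family $\{S^y_\alpha\}$ of measurable sets; put $h_y(x)=\inf\{\alpha:x\in S^y_\alpha\}$ and disintegrate $\mu_y=\int(\mu_y)_\alpha\,q_y(d\alpha)$ along the cross-sections $\{h_y=\alpha\}$, with quotient $q_y=(h_y)_\sharp\mu_y$, and likewise $\nu_y=\int(\nu_y)_\alpha\,\bar q_y(d\alpha)$ with $\bar q_y=(h_y)_\sharp\nu_y$. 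By hypothesis $q_y$ and the conditionals $(\mu_y)_\alpha$ are continuous. The geometry of a chain of transport rays, together with $d_N$-monotonicity of $\pi_y$, forces the transport to proceed monotonically along the foliation, so $(h_y,h_y)_\sharp\pi_y$ is a monotone plan between the atomless $q_y$ and $\bar q_y$ and is therefore induced by the one-dimensional monotone rearrangement $\gamma_y$; it is here that Proposition \ref{P:alternativa} enters, reducing $H(y)$ to a genuinely one-dimensional problem. One then builds $T_y$ fibrewise: for $q_y$-a.e.\ $\alpha$, transport $(\mu_y)_\alpha$ onto $(\nu_y)_{\gamma_y(\alpha)}$ along the rays joining the cross-sections $\{h_y=\alpha\}$ and $\{h_y=\gamma_y(\alpha)\}$, the existence of a fibrewise \emph{map} being a further one-dimensional rearrangement allowed by the continuity of $(\mu_y)_\alpha$; finally one checks, using the monotone structure, that $\int d_N(x,T_y(x))\,\mu_y(dx)=\int d_N\,d\pi_y$.

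\emph{Main obstacle.} The real work is Step 2, i.e.\ Theorem \ref{T:finale}: showing that $d_N$-cyclical monotonicity genuinely forces the transport to be monotone with respect to the foliation of Assumption \ref{I:clessidra3} (so that the one-dimensional reduction is both legitimate and cost-preserving), and that the mass can be moved fibrewise by a map and not merely a plan. A chain of transport rays is a concatenation of several branching geodesics rather than a single geodesic, so one must rule out the mass being forced to split across branches in a way incompatible with a map; this is exactly where the continuity of the marginals and the precise ``increasing'' structure of Assumption \ref{I:clessidra3} are used. By comparison Step 1 is routine, combining the earlier disintegration results, Theorem \ref{T:-1}, and standard measurable-selection bookkeeping. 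I expect the proof to isolate a one-dimensional lemma of the form ``atomless source plus monotone structure $\Rightarrow$ transport realized by a map with the correct cost'' and to apply it twice: once along the foliation, once within the cross-sections.
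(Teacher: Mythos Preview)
Your overall architecture is right---disintegrate along $H$, solve on each $H(y)$, glue via measurable selection---but you have the two named ingredients swapped and, more importantly, you misidentify the mechanism on a single $H(y)$. In the paper, Theorem \ref{T:finale} is precisely the \emph{gluing} step (a Von Neumann selection of $y\mapsto\pi_y$ from the analytic set of optimal graph-supported plans), while Proposition \ref{P:alternativa} is the construction of $T_y$ on a fixed $H(y)$. Your ``Main obstacle'' paragraph assigns these the other way around.

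The substantive gap is in Step 2. Assumption \ref{I:clessidra3} does not give a single foliation $\{S^y_\alpha\}$ with one level map $h_y$; it gives \emph{two} disjoint families, $\{K_t\}$ carrying $\mu_y$ and $\{Q_s\}$ carrying $\nu_y$, together with the structural inclusion $K_t\times Q_s\subset G$ for $t\le s$. The paper never proves (and does not need) that $d_N$-monotonicity of $\pi_y$ forces a monotone one-dimensional pushforward; nothing in the hypotheses excludes $K_t\times Q_s\cap G\neq\emptyset$ for $t>s$, so your claimed monotonicity of $(h_y,h_y)_\sharp\pi_y$ is unjustified. Proposition \ref{P:alternativa} bypasses $\pi_y$ entirely: from the extra hypothesis $m_{\mu_y}([0,t])\ge m_{\nu_y}([0,t])$ (which you omit) and continuity of $m_{\mu_y}$ one gets an increasing $\psi$ with $\psi_\sharp m_{\mu_y}=m_{\nu_y}$ and $\psi(t)\ge t$; continuity of $\mu_{y,t}$ then yields any Borel $T_t:K_t\to Q_{\psi(t)}$ pushing $\mu_{y,t}$ to $\nu_{y,\psi(t)}$, and since $\psi(t)\ge t$ the graph of the glued map $T_y$ lies in $G$ by the very definition of Assumption \ref{I:clessidra3}. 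Equality of cost with $\pi_y$ is then immediate and not a separate check ``using the monotone structure'': both $T_y$ and $\pi_y$ are $d_N$-cyclically monotone, and $d_N$ restricted to $H(y)\times H(y)$ is finite and, by Assumption \ref{I:comegeo}, lower semicontinuous, so both are optimal. In short, the route you propose through the projected plan would require an argument that is neither in the paper nor clearly available; the paper's route is to ignore $\pi_y$ on $H(y)$ and build a new $G$-supported map directly from the level-set structure.
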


It follows immediately that if we also assume that $\pi$ is optimal in the hypothesis of Theorem \ref{T:0}, then the Monge minimization problem admits a solution. 

Before presenting an application of Theorem \ref{T:0}, we summarize the theoretical results.
Let $\pi \in \Pi(\mu,\nu)$ be a $d_{N}$-cyclically monotone transference plan concentrated on a set $\Gamma$. 
We consider the corresponding family of chain of transport rays and, 
if assumptions \ref{I:comegeo} and \ref{I:comelocpt} are satisfied, we can perform, neglecting the set of initial points, a disintegration 
of $\mu,\nu$ and  $\pi$ with respect to the partition induced by the chain of transport rays.
Then if assumption \ref{I:NDEatom} is satisfied it follows that the set of initial points is $\mu$-negligible and 
the conditional probabilities $\mu_{y}$ are continuous. 
Since the geometry of $H(y)$ can be wild, we need another assumption to build a $d_{N}$-monotone transference map between $\mu_{y}$ 
and $\nu_{y}$. 
If $H(y)$ satisfies assumption \ref{I:clessidra3} we can perform another disintegration and, under additional regularity of 
the conditional probabilities of $\mu_{y}$ and of the quotient measure of $\mu_{y}$, we prove the existence of a 
$d_{N}$-monotone transference map between $\mu_{y}$ and $\nu_{y}$. 
Applying the same reasoning for $m$-a.e. $y$ we prove the existence of a transport map $T$ between $\mu$ and $\nu$ that has the 
same transference cost of the given $d_{N}$-cyclically monotone plan $\pi$.

In the last part of the paper we show an application of Theorem \ref{T:0}. 
Consider a hyper-surface $M \subset \erre^{d}$  that is the boundary of a convex and compact set $C$. 
Let $X$ be the closure, in the euclidean topology, of $\erre^{d} \setminus C$ and take as cost function 
$d_{M}$: the minimum of the euclidean length among all Lipschitz curves in $X$ that do not cross $M$. 
We will study the Monge minimization problem with $C$ as convex obstacle.
We will prove that if $\mu$ is absolutely continuous w.r.t. $\mathcal{L}^{d}$, the Monge minimization problem with cost $d_{M}$ admits a solution.

It is worth noting that the hypothesis of Theorem \ref{T:0}, namely Assumptions \ref{I:comegeo},
\ref{I:comelocpt}, \ref{I:NDEatom}, \ref{I:clessidra3}, are all about the behavior of a given $d_{N}$-cyclically monotone transference plan.  
For the obstacle problem we will prove that any $d_{N}$-cyclically monotone transference plan satisfies this hypothesis.
Therefore this concrete example provide also a confirmation of the validity of the proposed strategy and, in particular, of the 
non artificiality of the assumptions.

\subsection{Structure of the paper}
\label{Ss:structure}

The paper is organized as follows.

In Section \ref{S:preli}, we recall the mathematical tools we use in this paper. 
In Section \ref{Ss:univmeas} the fundamental results of projective set theory are listed. 
In Section \ref{S:disintegrazione} we recall the Disintegration Theorem. 
Next, the basic results of selection principles are listed in Section \ref{Ss:sele}, and in Section \ref{ss:Metric} we define the geodesic structure $(X,d,d_N)$ which is studied in this paper. Finally, Section \ref{Ss:General Facts} recalls some fundamental results in optimal transportation.

Section \ref{S:Optimal} shows how using only the $d_N$-cyclical monotonicity of a set $\Gamma$ we can obtain a partial order relation $G \subset X \times X$ as follows (Lemma \ref{L:analGR} and Proposition \ref{P:equiv}): $xGy$ iff there exists $(w,z) \in \Gamma$ and a geodesic $\gamma$, passing trough $w$ and $z$ and with direction $w \to z$, such that $x$, $y$ belongs to $\gamma$ and $\gamma^{-1}(x) \leq \gamma^{-1}(y)$. This set $G$ is analytic, and allows to define
\begin{itemize}
\item the transport rays set $R$ (\ref{E:Rray}),
\item the transport sets $\mathcal T_e$, $\mathcal T$ (with and without and points) (\ref{E:TR0}),
\item the set of initial points $a$ and final points $b$ (\ref{E:endpoint0}).
\end{itemize}
Even if this part  of Section \ref{S:Optimal} contains the same results of the first part of Section 3 of \cite{biacava:streconv}, 
for being as self contained as possible, we state this results and show their proofs again.
The main difference with the non-branching case is that here $R$ is not an equivalence relation. 
Therefore the approach proposed in \cite{biacava:streconv} doesn't work anymore and indeed 
the only common part with \cite{biacava:streconv} is  the first part of Section \ref{S:Optimal}.

To obtain an equivalence relation $H \subset X \times X$
we have to consider  the set of couples $(x,y)$ for $x,y\in \mathcal{T}$ such that 
there is a continuous path from $x$ to $y$, union of a
 finite number of transport rays never passing through $a\cup b$, Definition \ref{D:concat}. 
In Proposition \ref{P:equiv} we prove that $H$ is an equivalence relation.

Section \ref{S:partition} proves that the compatibility conditions 
\ref{I:comegeo} and \ref{I:comelocpt}
between $d_{N}$ and $d$ imply that the disintegration induced by $H$ on 
$\mathcal T$ is strongly consistent (Proposition \ref{P:sicogrF}). Using this fact we can reduce the analysis on $H(y)$ for $y$ in the quotient set.

In Section \ref{S:pt iniziali e finali} we prove Theorem \ref{T:-1}. We first introduce the operation $A \mapsto A_t$, the translation along geodesics (\ref{E:At}), and show that $t \mapsto \mu(A_t)$ is a $\mathcal{A}$-measurable function if $A$ is analytic (Lemma \ref{L:measumuAt}). \\ Next, we show that under the assumption
\[
\mu(A) > 0 \quad \Longrightarrow \quad  \mu(A_{t_{n}}) \geq C \mu(A)
\]
for an infinitesimal sequence $t_{n}$ and $C>0$,
the set of initial points $a$ is $\mu$-negligible (Proposition \ref{P:puntini}) and the conditional probabilities $\mu_y$ are continuous.

In Section \ref{S:Solution} we prove Theorem \ref{T:0}. 
First in Theorem \ref{T:finale} we prove that gluing all the $d_{N}$-cyclically monotone maps defined on $H(y)$ 
we obtain a measurable transference map $T$ from $\mu$ to $\nu$ $d_{N}$-cyclically monotone. 
Then the assumption on the structure of $\Gamma$ is stated  (Assumption \ref{A:clessidra3}) 
and in Proposition \ref{P:alternativa} we show that on the equivalence class $H(y)$ satisfying Assumption \ref{A:clessidra3}  there exists an 
optimal transference map $T_{y}$ from $\mu_{y}$ to $\nu_{y}$, provided the quotient measure and the marginal probabilities of $\mu_{y}$
induced by the partition given by Assumption \ref{A:clessidra3} are continuous.

Section \ref{S:application} gives an application of Theorem \ref{T:0}: $M$ is a connected smooth hyper-surface  of $\erre^{d}$ that
is the boundary of a convex and compact set $C$. Let $X=cl (\erre^{d}\setminus C)$. 
The distance $d_{M}$ is the minimum of the euclidean length among all the Lipschitz curves in $X$ (\ref{E:distostc}). 
Hence $C$ is to be intended as an obstacle for euclidean geodesics.
The geodesic space $(X,d_{M})$ fits into the setting of Theorem \ref{T:0} (Lemma \ref{L:cont} and Remark \ref{O:sempre}). 
If $\mu \ll \mathcal{L}^{d}$ then the $\mu$-measure of the set of initial points is zero and the marginal $\mu_{y}$ are continuous (Lemma \ref{L:appl}).  
Finally we show in Proposition \ref{P:fondamentale} and Proposition \ref{P:mappaostacolo1} that any $d_{M}$-cyclically monotone set and $\mu$ satisfy the hypothesis of Proposition \ref{P:alternativa}. 
It follows the existence of a solution for the Monge minimization problem.

\section{Preliminaries}
\label{S:preli}

In this section we recall some general facts about projective classes, the Disintegration Theorem for measure, measurable selection principles, geodesic spaces and optimal transportation problems.

\subsection{Borel, projective and universally measurable sets}
\label{Ss:univmeas}

The \emph{projective class $\Sigma^1_1(X)$} is the family of subsets $A$ of the Polish space $X$ for which there exists $Y$ Polish and $B \in \mathcal{B}(X \times Y)$ such that $A = P_1(B)$. The \emph{coprojective class $\Pi^1_1(X)$} is the complement in $X$ of the class $\Sigma^1_1(X)$. The class $\Sigma^1_1$ is called \emph{the class of analytic sets}, and $\Pi^1_1$ are the \emph{coanalytic sets}.

The \emph{projective class $\Sigma^1_{n+1}(X)$} is the family of subsets $A$ of the Polish space $X$ for which there exists $Y$ Polish and $B \in \Pi^1_n(X \times Y)$ such that $A = P_1(B)$. The \emph{coprojective class $\Pi^1_{n+1}(X)$} is the complement in $X$ of the class $\Sigma^1_{n+1}$.

If $\Sigma^1_n$, $\Pi^1_n$ are the projective, coprojective pointclasses, then the following holds (Chapter 4 of \cite{Sri:courseborel}):
\begin{enumerate}
\item $\Sigma^1_n$, $\Pi^1_n$ are closed under countable unions, intersections (in particular they are monotone classes);
\item $\Sigma^1_n$ is closed w.r.t. projections, $\Pi^1_n$ is closed w.r.t. coprojections;
\item if $A \in \Sigma^1_n$, then $X \setminus A \in \Pi^1_n$;
\item the \emph{ambiguous class} $\Delta^1_n = \Sigma^1_n \cap \Pi^1_n$ is a $\sigma$-algebra
and $\Sigma^1_n \cup \Pi^1_n \subset \Delta^1_{n+1}$.
\end{enumerate}
We will denote by $\mathcal{A}$ the $\sigma$-algebra generated by $\Sigma^1_1$: clearly $\mathcal{B} = \Delta^1_1 \subset \mathcal{A} \subset \Delta^1_2$.

We recall that a subset of $X$ Polish is \emph{universally measurable} if it belongs to all completed $\sigma$-algebras of all Borel measures on $X$:
it can be proved that every set in $\mathcal{A}$ is universally measurable.
We say that $f:X \to \erre \cup \{\pm \infty\}$ is a \emph{Souslin function} if $f^{-1}(t,+\infty] \in \Sigma^{1}_{1}$.

\begin{lemma}
\label{L:measuregP}
If $f : X \to Y$ is universally measurable, then $f^{-1}(U)$ is universally measurable if $U$ is.
\end{lemma}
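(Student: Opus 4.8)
The plan is to reduce the claim to the defining property of a universally measurable map — that the preimage of every Borel set is measurable for the completion of every Borel measure — by pushing an arbitrary measure from $X$ to $Y$ and running a sandwich argument there.

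First I would fix a Borel measure $\mu$ on $X$; it then suffices to prove $f^{-1}(U)\in\overline{\mathcal{B}(X)}^{\mu}$, the $\mu$-completion of $\mathcal{B}(X)$. Since $f$ is universally measurable, $f^{-1}(B)\in\overline{\mathcal{B}(X)}^{\mu}$ for every $B\in\mathcal{B}(Y)$, so $\nu(B):=\bar\mu(f^{-1}(B))$ is a well-defined Borel measure on $Y$ ($\sigma$-additivity passes from $\bar\mu$ through the fact that $f^{-1}$ commutes with countable disjoint unions). Because $U$ is universally measurable, applying this to the measure $\nu$ produces Borel sets $B_{1}\subset U\subset B_{2}$ with $\nu(B_{2}\setminus B_{1})=0$.

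Taking preimages gives $f^{-1}(B_{1})\subset f^{-1}(U)\subset f^{-1}(B_{2})$, and
\[
\bar\mu\bigl(f^{-1}(B_{2})\setminus f^{-1}(B_{1})\bigr)=\bar\mu\bigl(f^{-1}(B_{2}\setminus B_{1})\bigr)=\nu(B_{2}\setminus B_{1})=0 .
\]
Since $f^{-1}(B_{1})$ and $f^{-1}(B_{2})$ both lie in the complete $\sigma$-algebra $\overline{\mathcal{B}(X)}^{\mu}$ and their difference is $\mu$-null, the set $f^{-1}(U)$ trapped between them also lies in $\overline{\mathcal{B}(X)}^{\mu}$. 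As $\mu$ was arbitrary, $f^{-1}(U)$ is universally measurable.

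I expect no genuine obstacle here, only careful bookkeeping with completions; the one point to keep in mind is why the quick argument does not suffice. The family $\{U\subset Y:f^{-1}(U)\ \text{universally measurable}\}$ is a $\sigma$-algebra containing $\mathcal{B}(Y)$, but by itself this only yields the conclusion for Borel $U$, whereas the universally measurable sets form a strictly larger $\sigma$-algebra; bridging that gap is precisely the role of the pushforward measure $\nu$ and the completion–sandwich above.
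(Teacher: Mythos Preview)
Your argument is correct and is precisely the standard pushforward--sandwich proof of this fact. The paper itself does not prove the lemma; it only refers the reader to \cite{biacava:streconv}, so there is no in-paper argument to compare against, but the proof given there is the same as yours.
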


See \cite{biacava:streconv} for the proof.

\subsection{Disintegration of measures}
\label{S:disintegrazione}

We follow the approach of \cite{biacar:cmono}.

Given a measurable space $(R, \mathscr{R})$ and a function $r: R \to S$, with $S$ generic set, we can endow $S$ with the \emph{push forward $\sigma$-algebra} $\mathscr{S}$ of $\mathscr{R}$:
$$
Q \in \mathscr{S} \iff r^{-1}(Q) \in \mathscr{R},
$$
which could be also defined as the biggest $\sigma$-algebra on $S$ such that $r$ is measurable. Moreover given a measure space 
$(R,\mathscr{R},\rho)$, the \emph{push forward measure} $\eta$ is then defined as $\eta := (r_{\sharp}\rho)$.

Consider a probability space $(R, \mathscr{R},\rho)$ and its push forward measure space $(S,\mathscr{S},\eta)$ induced by a map $r$. From the above definition the map $r$ is clearly measurable and inverse measure preserving.

\begin{definition}
\label{defi:dis}
A \emph{disintegration} of $\rho$ \emph{consistent with} $r$ is a map $\rho: \mathscr{R} \times S \to [0,1]$ such that
\begin{enumerate}
\item  $\rho_{s}(\cdot)$ is a probability measure on $(R,\mathscr{R})$, for all $s\in S$,
\item  $\rho_{\cdot}(B)$ is $\eta$-measurable for all $B \in \mathscr{R}$,
\end{enumerate}
and satisfies for all $B \in \mathscr{R}, C \in \mathscr{S}$ the consistency condition
$$
\rho\left(B \cap r^{-1}(C) \right) = \int_{C} \rho_{s}(B) \eta(ds).
$$
A disintegration is \emph{strongly consistent with r} if for all $s$ we have $\rho_{s}(r^{-1}(s))=1$. 
\end{definition}

We say that a $\sigma$-algebra $\mathcal{A}$ is \emph{essentially countably generated} with respect to a measure $m$, if there exists a countably generated $\sigma$-algebra $\hat{\mathcal{A}}$ such that for all $A \in \mathcal{A}$ there exists $\hat{A} \in \hat{\mathcal{A}}$ such that $m (A \vartriangle \hat{A})=0$.

We recall the following version of the theorem of disintegration of measure that can be found on \cite{Fre:measuretheory4}, Section 452.

\begin{theorem}[Disintegration of measure]
\label{T:disintr}
Assume that $(R,\mathscr{R},\rho)$ is a countably generated probability space, $R = \cup_{s \in S}R_{s}$ a decomposition of R, $r: R \to S$ the quotient map  
and $\left( S, \mathscr{S},\eta \right)$ the quotient measure space. Then  $\mathscr{S}$ is essentially countably generated w.r.t. $\eta$ and there exists a unique disintegration $s \to \rho_{s}$ in the following sense: 
if $\rho_{1}, \rho_{2}$ are two consistent disintegration then $\rho_{1,s}(\cdot)=\rho_{2,s}(\cdot)$ for $\eta-$a.e. $s$.
 
If $\left\{ S_{n}\right\}_{n\in \enne}$ is a family essentially generating 
$\mathscr{S}$ define the equivalence relation:
$$
s \sim s' \iff \   \{  s \in S_{n} \iff s'\in S_{n}, \ \forall\, n \in \enne\}.
$$
Denoting with p the quotient map associated to the above equivalence relation and with $(L,\mathscr{L}, \lambda)$ the quotient measure space, the following properties hold:
\begin{itemize}
\item $R_{l}:= \cup_{s\in p^{-1}(l)}R_{s} = (p \circ r)^{-1}(l)$ is $\rho$-measurable and $R= \cup_{l\in L}R_{l}$;
\item the disintegration $\rho = \int_{L}\rho_{l} \lambda(dl)$ satisfies $\rho_{l}(R_{l})=1$, for $\lambda$-a.e. $l$. In particular there exists a 
strongly consistent disintegration w.r.t. $p \circ r$;
\item the disintegration $\rho = \int_{S}\rho_{s} \eta(ds)$ satisfies $\rho_{s}= \rho_{p(s)}$, for $\eta$-a.e. $s$.
\end{itemize}
\end{theorem}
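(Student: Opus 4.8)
\emph{Proof (sketch).}
The plan is to reduce the statement to the classical Rokhlin--von Neumann disintegration theorem over a standard Borel probability space; passing to the quotient $L$ is the device that upgrades ordinary consistency to strong consistency, since on $S$ two distinct points may be $\mathscr{S}$-indistinguishable (so a disintegration along $r$ need not be concentrated on the fibres $r^{-1}(s)$) whereas on $L$ they are separated by countably many sets of $\mathscr{L}$. First, $\mathscr{S}$ is essentially countably generated with respect to $\eta$: since $\eta = r_\sharp\rho$, the map $[Q]\mapsto[r^{-1}(Q)]$ embeds the measure algebra of $(S,\mathscr{S},\eta)$ isometrically into that of $(R,\mathscr{R},\rho)$ for the metrics $([A],[B])\mapsto\rho(A\triangle B)$; the latter algebra is separable because $\mathscr{R}$ is countably generated and $\rho$ a probability, and a metric subspace of a separable metric space is separable, which is exactly essential countable generation of $\mathscr{S}$. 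Fix $\{S_n\}_{n\in\enne}$ essentially generating $\mathscr{S}$; by the very definition of $\sim$ each $S_n$ is a union of $\sim$-classes, so $p(S_n)\in\mathscr{L}$, $p^{-1}(p(S_n))=S_n$, and $\{p(S_n)\}_n$ separates the points of $L$.

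\emph{The quotient $L$ as a standard Borel space, and the fibres.} Let $\psi:S\to\{0,1\}^{\enne}$, $\psi(s):=(\ind_{S_n}(s))_{n\in\enne}$, so that $s\sim s'\iff\psi(s)=\psi(s')$ and $L$, $p$, $\lambda$ are identified with $\psi(S)$, $\psi$, $\psi_\sharp\eta$. Since $p^{-1}$ of the trace Borel $\sigma$-algebra of $\psi(S)$ is generated by the sets $p^{-1}(\{x_n=1\})=S_n$, it agrees modulo $\eta$-null sets with $\mathscr{S}$; pushing this forward, $\mathscr{L}$ agrees modulo $\lambda$-null sets with the trace Borel $\sigma$-algebra of $\psi(S)$, so $(L,\mathscr{L},\lambda)$ is, modulo null sets, isomorphic to a Borel probability measure on the Polish space $\{0,1\}^{\enne}$, i.e.\ a standard Borel probability space. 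Finally, for $l\in L$ the fibre $p^{-1}(l)$ is a countable intersection of sets of the form $S_n$ or $S\setminus S_n$, hence lies in $\mathscr{S}$; consequently $R_l:=\bigcup_{s\in p^{-1}(l)}R_s=r^{-1}(p^{-1}(l))=(p\circ r)^{-1}(l)\in\mathscr{R}$, and $R=\bigcup_{l\in L}R_l$ is immediate.

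\emph{Disintegration along $p\circ r$ and along $r$; uniqueness.} Apply the classical disintegration theorem to the measurable map $q:=p\circ r:R\to L$ (legitimate since $\mathscr{R}$ is countably generated and $(L,\mathscr{L},\lambda)$ is standard mod null): one obtains $\rho=\int_L\rho_l\,\lambda(dl)$ with $\rho_l\bigl((p\circ r)^{-1}(l)\bigr)=\rho_l(R_l)=1$ for $\lambda$-a.e.\ $l$, i.e.\ strong consistency with respect to $p\circ r$. Define $\rho_s:=\rho_{p(s)}$; this is a probability on $\mathscr{R}$ for each $s$, $s\mapsto\rho_s(B)=\rho_{p(s)}(B)$ is $\mathscr{S}$-measurable for each $B\in\mathscr{R}$, and for $C\in\mathscr{S}$, writing $C=p^{-1}(C')$ mod $\eta$-null with $C'\in\mathscr{L}$ (whence $r^{-1}(C)=q^{-1}(C')$ mod $\rho$-null),
\[
\rho\bigl(B\cap r^{-1}(C)\bigr)=\rho\bigl(B\cap q^{-1}(C')\bigr)=\int_{C'}\rho_l(B)\,\lambda(dl)=\int_{C}\rho_{p(s)}(B)\,\eta(ds)=\int_C\rho_s(B)\,\eta(ds),
\]
so $\{\rho_s\}$ is a disintegration consistent with $r$ satisfying $\rho_s=\rho_{p(s)}$. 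For uniqueness, if $s\mapsto\rho^1_s$ and $s\mapsto\rho^2_s$ are two disintegrations consistent with $r$, then for each fixed $B\in\mathscr{R}$ the $\mathscr{S}$-measurable functions $s\mapsto\rho^i_s(B)$ have equal integral over every $C\in\mathscr{S}$, hence coincide $\eta$-a.e.; letting $B$ range over a countable algebra generating $\mathscr{R}$ and using that two probabilities agreeing on a generating algebra coincide, one gets $\rho^1_s=\rho^2_s$ for $\eta$-a.e.\ $s$. This establishes all the assertions.

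\emph{Main obstacle.} The real content lies in the first two steps: producing a standard Borel model and, above all, a \emph{strongly} consistent disintegration. This rests on the separability of the measure algebra and on the point-separating property deliberately engineered into $L$ — concretely on the machinery recalled in Section~\ref{Ss:univmeas} (universal measurability of analytic sets, measurable selection, the isomorphism theorem for separable measure algebras), which is exactly what fails for $(S,\mathscr{S})$ itself and forces the detour through $(L,\mathscr{L})$. Once $L$ is in standard form, the remaining steps are routine bookkeeping with push-forward $\sigma$-algebras.
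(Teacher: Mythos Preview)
The paper does not prove this theorem: it is stated as a recalled result and attributed to Fremlin, \emph{Measure Theory}, Section~452. Your sketch is essentially the standard argument and is correct in outline; there is nothing to compare against on the paper's side beyond the citation.

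One point deserves a sharper sentence. When you ``apply the classical disintegration theorem to $q=p\circ r$'', the hypothesis on $(R,\mathscr{R},\rho)$ is only that $\mathscr{R}$ is countably generated, not that $R$ is standard Borel. The Rokhlin--von~Neumann theorem in its usual form needs the source to be standard; you allude to the fix (the isomorphism theorem for separable measure algebras, i.e.\ a Maharam-type reduction of $(R,\mathscr{R},\rho)$ to a Lebesgue space mod null sets) only in the ``main obstacle'' paragraph. It would be cleaner to invoke that reduction explicitly \emph{before} applying disintegration, rather than leaving it implicit: map $(R,\mathscr{R},\rho)$ mod null to a standard Borel probability space via a measure-algebra isomorphism, disintegrate there along the induced map to $\{0,1\}^{\enne}$, and pull back. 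Apart from this ordering issue the steps --- separability of the measure algebra giving essential countable generation of $\mathscr{S}$, the coding map $\psi$ into $\{0,1\}^{\enne}$ to realize $L$ as standard mod null, and the uniqueness via a countable generating algebra --- are all sound.
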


In particular we will use the following corollary.

\begin{corollary}
\label{C:disintegration}
If $(S,\mathscr{S})=(X,\mathcal{B}(X))$ with $X$ Polish space, then the disintegration is strongly consistent.
\end{corollary}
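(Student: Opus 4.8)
The plan is to read this off directly from Theorem \ref{T:disintr}, exploiting the single extra feature of a Polish space that we have not yet used: it carries a \emph{countable} family of Borel sets that \emph{separates points}. So the first step would be to produce such a family. Since $\mathscr{S}=\mathcal{B}(X)$ with $X$ Polish, fix a countable dense set $\{x_k\}\subset X$ and let $\{S_n\}_{n\in\enne}$ be an enumeration of all balls $B(x_k,q)$ with $q$ a positive rational. Because $X$ is second countable this family generates $\mathcal{B}(X)$, and because $X$ is metric it separates points: if $x\neq x'$ then some $B(x_k,q)$ contains exactly one of them. In particular $\{S_n\}$ is a (genuine, not merely essential) countable generating family for $\mathscr{S}$, so it is admissible in the last part of Theorem \ref{T:disintr}.

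The second step is to feed this particular $\{S_n\}$ into that statement. The equivalence relation there, $s\sim s' \iff (s\in S_n \Leftrightarrow s'\in S_n \text{ for all } n)$, now collapses to the identity: $s\sim s'$ forces $s=s'$. Hence the associated quotient map $p$ may be taken to be $\id_S$, the quotient measure space $(L,\mathscr{L},\lambda)$ coincides with $(S,\mathscr{S},\eta)$, and $R_l=(p\circ r)^{-1}(l)=r^{-1}(l)$. The second bullet of Theorem \ref{T:disintr} then says precisely that the unique disintegration $\rho=\int_S \rho_s\,\eta(ds)$ satisfies $\rho_s\bigl(r^{-1}(s)\bigr)=1$ for $\eta$-a.e.\ $s$.

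It remains to upgrade ``for $\eta$-a.e.\ $s$'' to ``for all $s$'', as Definition \ref{defi:dis} requires for strong consistency. On the $\eta$-negligible set $N$ where $\rho_s(r^{-1}(s))=1$ fails, redefine $\rho_s$ to be an arbitrary probability measure concentrated on $r^{-1}(s)=R_s$ (this fiber is nonempty since $R=\cup_{s\in S}R_s$ is a decomposition); modifying the conditional measures on an $\eta$-null set leaves the consistency identity $\rho(B\cap r^{-1}(C))=\int_C\rho_s(B)\,\eta(ds)$ untouched. This produces a strongly consistent disintegration, which is the claim. The only points needing any care are the two just highlighted — choosing the generating family to be point-separating, and the a.e.-to-everywhere adjustment — and both are routine; beyond that the corollary is pure bookkeeping on top of Theorem \ref{T:disintr}.
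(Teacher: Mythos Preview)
Your argument is correct and is exactly the unpacking the paper has in mind: the corollary is stated without proof, as an immediate consequence of Theorem~\ref{T:disintr}, and the only extra input is that a Polish $X$ admits a countable point-separating generating family for $\mathcal{B}(X)$, which forces the equivalence $\sim$ in the theorem to be the identity. Your Step~3 is more scrupulous than the paper itself; note that the second bullet of Theorem~\ref{T:disintr} already calls the $\lambda$-a.e.\ conclusion ``strongly consistent'', so in practice the ``for all $s$'' in Definition~\ref{defi:dis} is used in the a.e.\ sense and the redefinition on a null set, while harmless, is not strictly needed for how the result is applied downstream.
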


\subsection{Selection principles}
\label{Ss:sele}

Given a multivalued function $F: X \to Y$, $X$, $Y$ metric spaces, the \emph{graph} of $F$ is the set
\begin{equation}
\label{E:graphF}
\textrm{graph}(F) := \big\{ (x,y) : y \in F(x) \big\}.
\end{equation}
The \emph{inverse image} of a set $S\subset Y$ is defined as:
\begin{equation}
\label{E:inverseF}
F^{-1}(S) := \big\{ x \in X\ :\ F(x)\cap S \neq \emptyset \big\}.
\end{equation}
For $F \subset X \times Y$, we denote also the sets
\begin{equation}
\label{E:sectionxx}
F_x := F \cap \{x\} \times Y, \quad F^y := F \cap X \times \{y\}.
\end{equation}
In particular, $F(x) = P_2(\gr(F)_x)$, $F^{-1}(y) = P_1(\gr(F)^y)$. We denote by $F^{-1}$ the graph of the inverse function
\begin{equation}
\label{E:F-1def}
F^{-1} := \big\{ (x,y): (y,x) \in F \big\}.
\end{equation}

We say that $F$ is \emph{$\mathcal{R}$-measurable} if $F^{-1}(B) \in \mathcal{R}$ for all $B$ open. We say that $F$ is \emph{strongly Borel measurable} if inverse images of closed sets are Borel. A multivalued function is called \emph{upper-semicontinuous} if the preimage of every closed set is closed: in particular u.s.c. maps are strongly Borel measurable.

In the following we will not distinguish between a multifunction and its graph. Note that the \emph{domain of $F$} (i.e. the set $P_1(F)$) is in general a subset of $X$. The same convention will be used for functions, in the sense that their domain may be a subset of $X$.

Given $F \subset X \times Y$, a \emph{section $u$ of $F$} is a function from $P_1(F)$ to $Y$ such that $\textrm{graph}(u) \subset F$. We recall the following selection principle, Theorem 5.5.2 of \cite{Sri:courseborel}, page 198.

\begin{theorem}[Von Neumann]
\label{T:vanneuma}
Let $X$ and $Y$ be Polish spaces, $A \subset X \times Y$ analytic, and $\mathcal{A}$ the $\sigma$-algebra generated by the analytic subsets of X. Then there is an $\mathcal{A}$-measurable section $u : P_1(A) \to Y$ of $A$.
\end{theorem}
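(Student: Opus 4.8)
The plan is to reduce the selection problem to one over the Baire space $\enne^{\enne}$ and there to choose, in each fibre, the lexicographically least branch; the point is that "least branch" depends on the base point only through sets of the form "a fibre meets a basic cylinder", and these are projections of open sets, hence analytic, which is exactly why the resulting section lands in $\mathcal{A}$ rather than in $\mathcal{B}$.

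\textbf{Parametrization.} If $A=\emptyset$ there is nothing to prove, so assume $A\neq\emptyset$. Since $A$ is analytic and nonempty there is a continuous surjection $g\colon \enne^{\enne}\to A$. Write $g=(g_{1},g_{2})$ with $g_{1}:=P_{1}\circ g\colon \enne^{\enne}\to X$ and $g_{2}:=P_{2}\circ g\colon \enne^{\enne}\to Y$, both continuous, and observe $P_{1}(A)=g_{1}(\enne^{\enne})$, which is analytic. For a finite string $t\in \enne^{<\enne}$ let $N_{t}\subset \enne^{\enne}$ denote the clopen cylinder of all extensions of $t$.

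\textbf{The leftmost branch.} For $x\in P_{1}(A)$ the fibre $g_{1}^{-1}(x)$ is a nonempty closed subset of $\enne^{\enne}$, and every nonempty closed subset of $\enne^{\enne}$ has a lexicographically least element (build it coordinate by coordinate, at stage $k$ taking the least $n$ for which the cylinder through the current prefix and $n$ still meets the closed set; closedness forces the limit sequence to lie in the set). Define $\sigma(x)\in\enne^{\enne}$ to be this lexicographically least element of $g_{1}^{-1}(x)$, and set $u:=g_{2}\circ\sigma\colon P_{1}(A)\to Y$.

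\textbf{Measurability.} It suffices to check that $\{x\in P_{1}(A):\sigma(x)\in N_{s}\}\in\mathcal{A}$ for every $s\in \enne^{<\enne}$, since the cylinders $N_{s}$ generate the topology of $\enne^{\enne}$, and then for $U\subseteq Y$ open one has $u^{-1}(U)=\sigma^{-1}\bigl(g_{2}^{-1}(U)\bigr)$ with $g_{2}^{-1}(U)$ open in $\enne^{\enne}$, hence a countable union of cylinders, so $u^{-1}(U)\in\mathcal{A}$ because $\mathcal{A}$ is a $\sigma$-algebra. Now, by the construction of the leftmost branch, the condition $\sigma(x)\in N_{s}$ is a finite Boolean combination of conditions of the form $g_{1}^{-1}(x)\cap N_{t}\neq\emptyset$, where $t$ ranges over $s$, its initial segments, and the strings obtained from an initial segment of $s$ by decreasing its last letter; and $\{x:g_{1}^{-1}(x)\cap N_{t}\neq\emptyset\}=g_{1}(N_{t})$ is analytic, being the continuous image of the Polish space $N_{t}$. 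A finite Boolean combination of analytic sets lies in $\mathcal{A}$, so $\sigma$, and therefore $u$, is $\mathcal{A}$-measurable. Finally $g_{1}(\sigma(x))=x$ for every $x\in P_{1}(A)$, whence $(x,u(x))=\bigl(g_{1}(\sigma(x)),g_{2}(\sigma(x))\bigr)=g(\sigma(x))\in A$, i.e. $\gr(u)\subseteq A$, so $u$ is a section of $A$.

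The main obstacle is the measurability step: one must identify precisely how the lexicographically least branch of $g_{1}^{-1}(x)$ depends on $x$ and recognize the sets "fibre meets a cylinder" as images of Borel sets under a continuous map, i.e. as analytic sets. This is what both makes the argument work and explains why the section can only be guaranteed to be $\mathcal{A}$-measurable. The auxiliary facts — existence of a continuous surjection from $\enne^{\enne}$ onto a nonempty analytic set, and existence of lexicographically least elements in closed subsets of $\enne^{\enne}$ — are standard.
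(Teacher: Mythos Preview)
The paper does not give its own proof of this theorem; it simply records it as Theorem~5.5.2 of Srivastava's book and uses it as a black box. Your argument is correct and is essentially the classical Von~Neumann proof one finds in that reference (and in Kechris): parametrize the analytic set by a continuous surjection from Baire space, select the lexicographically least branch in each fibre, and check $\mathcal{A}$-measurability by expressing $\{\sigma(x)\in N_s\}$ as a finite Boolean combination of the analytic sets $g_1(N_t)$. So there is nothing to compare against, and nothing to correct.
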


A \emph{cross-section of the equivalence relation $E$} is a set $S \subset E$ such that the intersection of $S$ with each equivalence class is a singleton. We recall that a set $A \subset X$ is saturated for the equivalence relation $E \subset X \times X$ if $A = \cup_{x \in A} E(x)$.

The next result is taken from \cite{Sri:courseborel}, Theorem 5.2.1.

\begin{theorem}
\label{T:KRN}
Let $Y$ be a Polish space, $X$ a nonempty set, and $\mathcal{L}$ a $\sigma$-algebra of subset of $X$. 
Every $\mathcal{L}$-measurable, closed value multifunction $F:X \to Y$ admits an $\mathcal{L}$-measurable section.
\end{theorem}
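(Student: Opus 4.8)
The plan is to run the classical Kuratowski--Ryll-Nardzewski argument: build a uniformly Cauchy sequence of countably-valued $\mathcal{L}$-measurable maps that are closer and closer to the values of $F$, and take the pointwise limit. First I would fix a complete metric $d$ inducing the topology of $Y$ (replacing $d$ by $d/(1+d)$ if needed, so that $d\le 1$) together with a countable dense set $\{y_i\}_{i\in\N}\subset Y$. Since $F$ is $\mathcal{L}$-measurable, for every $i$ and $r>0$ one has $F^{-1}(B(y_i,r))=\{x:\dist(y_i,F(x))<r\}\in\mathcal{L}$, where $B(y_i,r)$ denotes the open $d$-ball; this measurability of preimages of open balls, together with the density of $\{y_i\}$, is the only structural input that will be used. (If $F(x)=\emptyset$ is permitted for some $x$, I first note $P_1(F)=\bigcup_i F^{-1}(B(y_i,1))\in\mathcal{L}$ and work over $P_1(F)$, where $F$ has nonempty closed values.)

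Next I would construct by induction $\mathcal{L}$-measurable maps $f_n:X\to Y$, each assuming only countably many values, all belonging to $\{y_i\}$, such that $\dist(f_n(x),F(x))<2^{-n}$ for all $x$, and $d(f_n(x),f_{n-1}(x))<3\cdot 2^{-n}$ for all $x$ and all $n\ge 1$. For $n=0$, the sets $C_i:=F^{-1}(B(y_i,1))$ cover $X$ by density, so disjointifying them and putting $f_0\equiv y_i$ on the $i$-th piece does the job. For the inductive step, write $X=\bigsqcup_j E_j$ with $E_j:=f_n^{-1}(y_j)\in\mathcal{L}$; on $E_j$ one has $\dist(y_j,F(x))<2^{-n}$, hence for each $x\in E_j$ there is $w\in F(x)$ with $d(y_j,w)<2^{-n}$ and then some $y_k$ with $d(y_k,w)<2^{-(n+1)}$, whence $d(y_k,y_j)<2^{-n}+2^{-(n+1)}=3\cdot 2^{-(n+1)}$ and $x\in E_j\cap F^{-1}(B(y_k,2^{-(n+1)}))$. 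Thus, retaining only the indices $k$ with $d(y_k,y_j)<3\cdot 2^{-(n+1)}$, the sets $A_{j,k}:=E_j\cap F^{-1}(B(y_k,2^{-(n+1)}))\in\mathcal{L}$ cover $E_j$; disjointifying the $A_{j,k}$ inside each $E_j$ and setting $f_{n+1}\equiv y_k$ on the corresponding piece produces an $\mathcal{L}$-measurable, countably-valued $f_{n+1}$ with $\dist(f_{n+1}(x),F(x))<2^{-(n+1)}$ and $d(f_{n+1}(x),f_n(x))<3\cdot 2^{-(n+1)}$, as required.

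Finally I would pass to the limit. From the second estimate the sequence $(f_n(x))_n$ is $d$-Cauchy, uniformly in $x$ (the tails of $\sum 3\cdot 2^{-n}$ vanish), so completeness of $(Y,d)$ gives a well-defined $f(x):=\lim_n f_n(x)$. The map $f$ is $\mathcal{L}$-measurable because $x\mapsto d(f(x),y)=\lim_n d(f_n(x),y)$ is $\mathcal{L}$-measurable for each $y$ in the countable dense set, and this characterizes $\mathcal{L}$-measurability of maps into the separable metric space $Y$. Moreover, using that $y\mapsto\dist(y,F(x))$ is $1$-Lipschitz, $\dist(f(x),F(x))=\lim_n\dist(f_n(x),F(x))\le\lim_n 2^{-n}=0$, and since $F(x)$ is closed this forces $f(x)\in F(x)$; hence $f$ is an $\mathcal{L}$-measurable section of $F$. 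The one delicate point, and the step I expect to be the main obstacle to get exactly right, is the inductive construction: one has to produce, $\mathcal{L}$-measurably, a new countably-valued approximation that is simultaneously $2^{-(n+1)}$-close to $F(x)$ and uniformly $O(2^{-n})$-close to the previous approximation, and this balancing act is precisely what forces the bookkeeping with the constants and the double use of $F^{-1}(\text{open})\in\mathcal{L}$ and of density of $\{y_i\}$; everything else is routine.
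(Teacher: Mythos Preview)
Your proof is correct and is precisely the classical Kuratowski--Ryll-Nardzewski argument. Note, however, that the paper does not supply its own proof of this statement: it is quoted verbatim as Theorem 5.2.1 of \cite{Sri:courseborel} and used as a black box, so there is nothing to compare against beyond observing that your argument is the standard one found in that reference.
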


A standard corollary of the above selection principle is that if the disintegration is strongly consistent in a Polish space, then up to a saturated set of negligible measure there exists a Borel cross-section.

In particular, we will use the following corollary.

\begin{corollary}
\label{C:weelsupprr}
Let $F \subset X \times X$ be $\mathcal{A}$-measurable, $X$ Polish, such that $F_x$ is closed and define the equivalence relation $x \sim y \ \Leftrightarrow \ F(x) = F(y)$. Then there exists a $\mathcal{A}$-section $f : P_1(F) \to X$ such that $(x,f(x)) \in F$ and $f(x) = f(y)$ if $x \sim y$.
\end{corollary}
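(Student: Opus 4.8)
The plan is to produce $f$ by a canonical version of the Kuratowski--Ryll-Nardzewski selection procedure of Theorem \ref{T:KRN}, arranged so that the value $f(x)$ depends on $x$ only through the set $F(x)$; the invariance $f(x)=f(y)$ for $x\sim y$ is then automatic, and the other two required properties come directly from the construction.

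First I would fix a complete metric $d$ inducing the topology of $X$ and a countable dense sequence $\{x_{n}\}_{n\in\enne}$. Working on $P_{1}(F)=F^{-1}(X)\in\mathcal{A}$, on which $F(x)$ is a nonempty closed set, I would build inductively maps $\sigma_{k}:P_{1}(F)\to X$ taking values in $\{x_{n}\}$ such that $F(x)\cap B(\sigma_{k}(x),2^{-k})\neq\emptyset$ and $d(\sigma_{k}(x),\sigma_{k+1}(x))<2^{-k}$, by always choosing $\sigma_{k}(x)=x_{m}$ with $m$ the least index for which the relevant ball meets $F(x)$ and is compatible with $\sigma_{k-1}(x)$. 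Then $\{\sigma_{k}(x)\}_{k}$ is $d$-Cauchy uniformly in $x$, and since $F(x)$ is closed the limit $f(x):=\lim_{k}\sigma_{k}(x)$ satisfies $(x,f(x))\in F$.

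The two points to check are measurability and invariance. For measurability, each decision in the recursion is governed by sets of the form $\{x: F(x)\cap B(x_{n},2^{-k})\neq\emptyset\}=F^{-1}(B(x_{n},2^{-k}))$, which lie in $\mathcal{A}$ since $F$ is $\mathcal{A}$-measurable, together with the level sets of the previously constructed $\sigma_{k-1}$; because $\mathcal{A}$ is a $\sigma$-algebra, each $\sigma_{k}$ is $\mathcal{A}$-measurable with countable range, hence $f=\lim_{k}\sigma_{k}$ is $\mathcal{A}$-measurable. For invariance, observe that the whole recursion consults $x$ only through the truth values of the statements ``$F(x)\cap B(x_{n},2^{-k})\neq\emptyset$''; so if $F(x)=F(y)$ these truth values all coincide, the sequences $\{\sigma_{k}(x)\}$ and $\{\sigma_{k}(y)\}$ are literally equal, and therefore $f(x)=f(y)$.

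The only real subtlety, and the step I would be most careful about, is that this argument requires entering the proof of Theorem \ref{T:KRN} rather than invoking it as a black box: a generic measurable selection need not be constant on the fibres of $x\mapsto F(x)$, and it is precisely the least-index (greedy) bookkeeping that makes the output a function of the value set alone. An equivalent and perhaps cleaner packaging, which I would mention, is to regard $x\mapsto F(x)$ as an $\mathcal{A}$-measurable map into the space of closed subsets of $X$ endowed with the Effros Borel structure --- $\mathcal{A}$-measurability being exactly the statement $F^{-1}(U)\in\mathcal{A}$ for open $U$ --- and then to post-compose it with a fixed Borel selection $s$ of nonempty closed sets furnished by Theorem \ref{T:KRN}; the resulting $f$ manifestly depends only on $F(x)$ and is $\mathcal{A}$-measurable, being the composition of an $\mathcal{A}$-measurable map with a Borel one.
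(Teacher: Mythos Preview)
Your proposal is correct, but the paper's route is shorter and, contrary to your belief, does invoke Theorem \ref{T:KRN} as a black box. The paper's trick is to apply Theorem \ref{T:KRN} not with the $\sigma$-algebra $\mathcal{A}$ but with the coarser $\sigma$-algebra $\mathcal{R}$ generated by the sets $F^{-1}(G)$, $G\subset X$ open; since $F$ is $\mathcal{A}$-measurable one has $\mathcal{R}\subset\mathcal{A}$, and by construction $x\mapsto F(x)$ is $\mathcal{R}$-measurable. Theorem \ref{T:KRN} then yields an $\mathcal{R}$-measurable section $f$, which is automatically $\mathcal{A}$-measurable. The invariance comes for free: if $F(x)=F(y)$ then $x$ and $y$ lie in the same atom of $\mathcal{R}$, and any $\mathcal{R}$-measurable map into a Polish space is constant on atoms. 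Your second packaging via the Effros Borel structure is exactly this argument in disguise ($\mathcal{R}$ is precisely the pullback of the Effros $\sigma$-algebra under $x\mapsto F(x)$), so you were closer to the paper's proof than you realized; what the paper buys is avoiding both the explicit greedy recursion and any mention of the Effros space, at the price of the single sentence ``apply KRN with $\mathcal{R}$ instead of $\mathcal{A}$''. Your first approach works too, and has the minor advantage of being fully self-contained, but it does require re-running the KRN construction rather than citing it.
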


\begin{proof}
For all open sets $G \subset X$, consider the sets $F^{-1}(G) = P_1(F \cap X \times G) \in \mathcal{A}$, and let $\mathcal{R}$ be the $\sigma$-algebra generated by $F^{-1}(G)$. Clearly $\mathcal{R} \subset \mathcal{A}$.

If $x \sim y$, then
\[
x \in F^{-1}(G) \quad \Longleftrightarrow \quad y \in F^{-1}(G),
\]
so that each equivalence class is contained in an atom of $\mathcal{R}$, and moreover by construction $x \mapsto F(x)$ is $\mathcal{R}$-measurable.

We thus conclude by using Theorem \ref{T:KRN} that there exists an $\mathcal{R}$-measurable section $f$: this measurability condition implies that $f$ is constant on atoms, in particular on equivalence classes.
\end{proof}


\subsection{Metric setting}
\label{ss:Metric}

In this section we refer to \cite{burago}.

\begin{definition}
\label{D:lengthstr}
A \emph{length structure} on a topological space $X$ is a class $\mathtt{A}$ of admissible paths, which is a subset of all continuous paths in X, together with a map $L: \mathtt{A} \to [0,+\infty]$: the map $L$ is called \emph{length of path}. The class $\mathtt{A}$ satisfies the following assumptions:
\begin{description}
\item[closure under restrictions] if $\gamma : [a,b] \to X$ is admissible and $a \leq c\leq d \leq b$, then $\gamma \llcorner_{[c,d]}$ is also admissible.
\item[closure under concatenations of paths] if $\gamma : [a,b] \to X$ is such that its restrictions $\gamma_1, \gamma_2$ to $[a,c]$ and $[c,b]$ are both admissible, then so is $\gamma$.
\item[closure under admissible reparametrizations] for an admissible path $\gamma : [a,b] \to X$ and a for $\f: [c,d]\to[a,b]$, $\f \in B$, with $B$ class of admissible homeomorphisms that includes the linear one, the composition $\gamma(\f(t))$ is also admissible.
\end{description}

The map $L$ satisfies the following properties:
\begin{description}
\item[additivity] $L(\gamma \llcorner_{[a,b]}) = L(\gamma \llcorner_{[a,c]}) + L(\gamma \llcorner_{[c,b]})$ for any $c\in [a,b]$.
\item[continuity] $L(\gamma \llcorner_{[a,t]})$ is a continuous function of $t$.
\item[invariance] The length is invariant under admissible reparametrizations.
\item[topology] Length structure agrees with the topology of $X$ in the following sense: for a neighborhood $U_x$ of a point $x \in X$, the length of paths connecting $x$ with points of the complement of $U_x$ is separated from zero:
\[
\inf \big\{ L(\gamma) : \gamma(a)=x, \gamma(b) \in X\setminus U_{x} \big\} >0.
\]
\end{description}
\end{definition}

Given a length structure, we can define a distance
\[
d_N(x,y) = \inf \Big\{ L(\gamma): \gamma:[a,b]\to X, \gamma \in \mathtt{A}, \gamma(a)=x, \gamma(b)=y \Big\},
\]
that makes $(X,d_{N})$ a metric space (allowing $d_{N}$ to be $+\infty$). The metric $d_{N}$ is called \emph{intrinsic}. 
It follows from Proposition 2.5.9 of \cite{burago} that every admissible curve of finite length admits a constant speed parametrization, i.e.
$\gamma$ defined on $[0,1]$ and $L(\gamma\llcorner[t,t'])= v (t'-t)$, with $v$ velocity.

\begin{definition}
A length structure is said to be \emph{complete} if for every two points $x,y$ there exists an admissible path joining them whose length $L(\gamma)$ is equal to $d_{N}(x,y)$.
\end{definition}
Observe that in the previous definition we do no require $d_{N}(x,y)<+\infty$.

Intrinsic metrics associated with complete length structure are said to be \emph{strictly intrinsic}. The metric space $(X,d_N)$ with $d_N$ strictly intrinsic is called a \emph{geodesic space}. A curve whose length equals the distance between its end points is called \emph{geodesic}.

From now on we assume the following: \label{P:assumpDL}

\medskip
\begin{enumerate}
\item $(X,d)$ Polish space;
\item $d_N : X \times X \to [0,+\infty]$ is a  Borel distance;
\item $(X,d_N)$ is a geodesic space;
\end{enumerate}
\medskip

Since we have two metric structures on $X$, we denote the quantities relating to $d_N$ with the subscript $N$: for example
\[
B_r(x) = \big\{ y : d(x,y) < r \big\}, \quad B_{r,N}(x) = \big\{ y : d_N(x,y) < r \big\}.
\]
In particular we will use the notation
\[
D_N(x) = \big\{ y : d_N(x,y) < + \infty \big\},
\]
$(\mathcal{K},d_H)$ for the compact sets of $(X,d)$ with the Hausdorff distance $d_H$ and $(\mathcal{K}_N,d_{H,N})$ for the compact sets of $(X,d_N)$ with the Hausdorff distance $d_{H,N}$. We recall that $(\mathcal{K},d_H)$ is Polish.

\begin{lemma}
\label{L:measclos}
If $A$ is analytic in $(X,d)$, then $\{x : d_N(A,x) < \varepsilon \}$ is analytic for all $\varepsilon > 0$.
\end{lemma}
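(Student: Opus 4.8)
The plan is to rewrite the set in question as a projection of an analytic set and invoke the standard stability properties of the analytic pointclass $\Sigma^1_1$ recorded in Section \ref{Ss:univmeas}. First I would observe that, by definition of $d_N(A,x) = \inf_{a \in A} d_N(a,x)$,
\[
\big\{ x : d_N(A,x) < \varepsilon \big\} = \big\{ x : \exists\, a \in A \text{ with } d_N(a,x) < \varepsilon \big\} = P_2\Big( (A \times X) \cap \big\{ (a,x) : d_N(a,x) < \varepsilon \big\} \Big),
\]
where $P_2 : X \times X \to X$ is the projection onto the second coordinate. (If $A = \emptyset$ both sides are empty, hence trivially analytic, so we may assume $A \neq \emptyset$.)

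Next I would check that the set being projected is analytic in $X \times X$. Since $A \in \Sigma^1_1(X)$, the product $A \times X$ belongs to $\Sigma^1_1(X \times X)$: indeed if $A = P_1(B)$ with $B \in \mathcal{B}(X \times Y)$, then $A \times X = P_{1,2}\big( \{(x,x',y) : (x,y) \in B\} \big)$ is the projection of a Borel set. By the standing assumption of the paper, $d_N : X \times X \to [0,+\infty]$ is Borel with respect to $d \times d$, so $\{(a,x) : d_N(a,x) < \varepsilon\} = d_N^{-1}\big([0,\varepsilon)\big) \in \mathcal{B}(X \times X)$. The intersection of an analytic set with a Borel set is analytic (analytic sets form a monotone class and are closed under countable intersections, and every Borel set is analytic), so $(A \times X) \cap \{d_N < \varepsilon\} \in \Sigma^1_1(X \times X)$.

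Finally, since $\Sigma^1_1$ is closed under projections, the image $P_2$ of this set is analytic in $(X,d)$, which is the claim. There is no real obstacle here: the only points to verify carefully are that $A \times X$ remains analytic (routine, from the definition via a Borel preimage) and that $d_N$ being jointly Borel makes the sublevel set $\{d_N < \varepsilon\}$ Borel in the product — both are immediate from the hypotheses in force. The argument is exactly the analogue, in the branching setting, of the corresponding fact used in \cite{biacava:streconv}.
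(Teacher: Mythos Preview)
Your proof is correct and follows exactly the same route as the paper: write the set as a projection of the intersection of $A \times X$ (analytic) with the Borel sublevel $\{d_N < \varepsilon\}$, then use closure of $\Sigma^1_1$ under projections. The paper's version is just terser, recording only the identity $\{x : d_N(A,x) < \varepsilon\} = P_1\big( X \times A \cap \{(x,y) : d_N(x,y) < \varepsilon\} \big)$ and invoking projection-stability.
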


\begin{proof}
Observe that
\[
\big\{ x : d_N(A,x) < \varepsilon \big\} = P_1 \Big( X \times A \cap \big\{ (x,y) :d_N(x,y) < \varepsilon \big\} \Big),
\]
so that the conclusion follows from the invariance of the class $\Sigma^1_1$ w.r.t. projections.
\end{proof}

In particular, $\overline{A}^{_{d_N}}$, the closure of $A$ w.r.t. $d_N$, is analytic if $A$ is analytic.

\subsection{General facts about optimal transportation}
\label{Ss:General Facts}

Let $(X,\Omega,\mu)$ and $(Y,\Sigma,\nu)$ be two probability spaces and  $c:X \times Y \to \erre^{+}$ be a $\Omega \times \Sigma$ measurable function. Consider the set of transference plans
\[
\Pi (\mu,\nu) : =\Big\{ \pi \in \mathcal{P}(X\times Y) : (P_{1})_{\sharp}\pi = \mu, (P_{2})_{\sharp}\pi = \nu   \Big\},
\]
where $P_{i}(x_{1},x_{2})= x_{i}, i=1,2$. Define the functional
\begin{equation}
\label{E:Ifunct}
\begin{array}{ccccl} 
\mathcal{I} &:& \Pi(\mu,\nu) &\longrightarrow& \erre^{+} \cr
 & & \pi & \longmapsto &\displaystyle \mathcal{I}(\pi):=\int_{X\times Y} c \pi.
\end{array}
\end{equation}
The \emph{Monge-Kantorovich minimization problem} is the minimization of $\mathcal{I}$ over all transference plans.

If we consider a map $T : X \to Y$ such that $T_{\sharp}\mu=\nu$, the functional (\ref{E:Ifunct}) becomes
\[
\mathcal{I}(T):= \mathcal{I}( (Id \times T)_{\sharp}\mu ) = \int_{X} c(x,T(x)) \mu(dx).
\]
The minimization problem over all $T$ is called \emph{Monge minimization problem}.

The Kantorovich problem admits a (pre) dual formulation: before stating it, we introduce two definitions.

\begin{definition}
A map $\varphi : X \to \erre \cup \left\{ -\infty \right\} $ is said to be \emph{$c$-concave} if it is not identically $-\infty$ and there exists $\psi : Y \to \erre \cup \left\{ - \infty \right\}$, $\psi \not\equiv{-\infty}$, such that
\[
\varphi (x) = \inf_{y \in Y} \left\{ c(x,y) - \psi (y) \right\}.
\]
The \emph{$c$-transform} of $\varphi$ is the function
\begin{equation}
\label{E:ctransf}
\varphi^{c}(y):= \inf_{x\in X}  \left\{ c(x,y) - \varphi (x) \right\}.
\end{equation}
The \emph{$c$-superdifferential $\partial^{c}\f$} of $\varphi$ is the subset of $X \times Y$ defined by
\begin{equation}
\label{E:csudiff}
(x,y) \in \partial^{c}\f \iff\  c(x,y) - \f(x) \leq c(z,y) - \f(z) \quad \forall z \in X. 
\end{equation}
\end{definition}

\begin{definition}
A set $\Gamma \subset X\times Y$ is said to be 
\emph{c-cyclically monotone} if, for any $n \in \mathbb{N}$ and for any family $(x_{1},y_{1}), \dots, (x_{n},y_{n})$ of points of $\Gamma$, the following inequality holds
\[
\sum_{i=0}^{n}c(x_{i},y_{i}) \leq \sum_{i=0}^{n}c(x_{i+1},y_{i}),
\]
with $x_{n+1}=x_{1}$. A transference plan is said to be \emph{$c$-cyclically monotone} (or just \emph{$c$-monotone}) if it is concentrated on a $c$-cyclically monotone set.
\end{definition}

Consider the set
\begin{equation}
\label{E:Phicset}
\Phi_c := \Big\{ (\varphi,\psi) \in L^1(\mu) \times L^1(\nu): \varphi(x) + \psi(y) \leq c(x,y) \Big\}.
\end{equation}
Define for all $(\varphi,\psi)\in \Phi_c$ the functional
\begin{equation}
\label{E:Jfunct}
J(\varphi,\psi) := \int \varphi \mu + \int \psi \nu.
\end{equation}

The following is a well known result (see Theorem 5.10 of \cite{villa:Oldnew}).
\begin{theorem}[Kantorovich Duality]
\label{T:kanto}
Let X and Y be Polish spaces, let $\mu \in \mathcal{P}(X)$ and $\nu \in \mathcal{P}(Y)$, and let $c: X\times Y \to \erre^{+}\cup \left\{ +\infty \right\}$ be lower semicontinuous. 
Then the following holds:
\begin{enumerate}
\item Kantorovich duality:
\[
\inf_{\pi \in \Pi[\mu,\nu]} \mathcal{I} (\pi) = \sup _{(\varphi,\psi)\in \Phi_{c}} J(\varphi,\psi).
\]
Moreover, the infimum on the left-hand side is attained and the right-hand side is also equal to
\[
\sup _{(\varphi,\psi)\in \Phi_{c}\cap C_{b}} J(\varphi,\psi),
\]
where $C_{b}= C_{b}(X, \erre) \times C_{b}(Y,\erre)$.
\item If $c$ is real valued and the optimal cost 
\[
C(\mu,\nu):=\inf _{\pi \in \Pi(\mu,\nu)} I (\pi)
\]
is finite, then there is a measurable $c$-cyclically monotone set $\Gamma \subset X\times Y$, closed if $c$ is continuous, such that for any $\pi \in \Pi(\mu,\nu)$ the following statements are equivalent:
\begin{enumerate}
\item $\pi$ is optimal;
\item $\pi$ is $c$-cyclically monotone;
\item $\pi$ is concentrated on $\Gamma$;
\item there exists a $c$-concave function $\f$ such that $\pi$-a.s. $\f(x)+\f^{c}(y)=c(x,y)$.
\end{enumerate}

\item If moreover
\[
c(x,y) \leq c_{X}(x) + c_{Y}(y), \quad \ c_{X}\ \mu\textrm{-measurable}, \ c_{Y}\ \nu\textrm{-measurable},
\]
then there exist a couple of potentials and the optimal transference plan $\pi$ is concentrated on the set
\[
\big\{ (x,y) \in X\times Y\, | \, \varphi (x) + \psi (y) = c(x,y) \big\}.
\]
Finally if $(c_{X},c_{Y})\in \mathcal{L}^{1}(\mu) \times \mathcal{L}^{1}(\nu)$ then the supremum is attained 
\[
\sup_{\Phi_{c}} J = J(\f, \f^{c}).
\]
\end{enumerate}
\end{theorem}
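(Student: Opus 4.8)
The plan is to follow the classical route, treating the three items in turn; since the statement is standard I would ultimately cite Villani's Theorem 5.10 for the full details. First, the easy inequality and existence of a minimizer. For every $(\varphi,\psi)\in\Phi_c$ and $\pi\in\Pi(\mu,\nu)$,
\[
J(\varphi,\psi)=\int\big(\varphi(x)+\psi(y)\big)\,\pi(dx\,dy)\le\int c\,d\pi=\mathcal I(\pi),
\]
by the marginal conditions and $\varphi(x)+\psi(y)\le c(x,y)$; taking the supremum over $\Phi_c$ (or over $\Phi_c\cap C_b$) and the infimum over $\Pi(\mu,\nu)$ gives one inequality. That the infimum is attained follows from weak compactness of $\Pi(\mu,\nu)$ --- tightness by Prokhorov, as $\mu,\nu$ are tight Borel measures on Polish spaces --- together with lower semicontinuity of $\pi\mapsto\int c\,d\pi$, which one gets by writing the nonnegative l.s.c.\ cost $c$ as an increasing pointwise limit of bounded continuous functions and applying monotone convergence.

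\emph{The duality equality.} For the nontrivial inequality $\inf\mathcal I\le\sup_{\Phi_c\cap C_b}J$ I would invoke the Fenchel--Rockafellar theorem on the Banach space $C_b(X\times Y)$. Set $\Theta(u)=0$ if $u\ge -c$ and $+\infty$ otherwise, and $\Xi(u)=-\int\varphi\,d\mu-\int\psi\,d\nu$ if $u(x,y)=\varphi(x)+\psi(y)$ with $\varphi\in C_b(X)$, $\psi\in C_b(Y)$, and $+\infty$ otherwise; both functionals are convex and $\Theta$ is continuous at the constant $1\ge -c$ (recall $c\ge 0$), so the qualification hypothesis holds. The theorem equates $\inf_u\big(\Theta(u)+\Xi(u)\big)=-\sup_{\Phi_c\cap C_b}J$ with $\max_{\ell}\big(-\Theta^*(-\ell)-\Xi^*(\ell)\big)$; identifying $\ell$ with a nonnegative Radon measure $\pi$ (finiteness of $\Theta^*$ forces positivity and $\Theta^*(-\ell)=\int c\,d\pi$) whose marginals are forced to be $\mu,\nu$ by testing $u$ against functions of $x$ alone and of $y$ alone (finiteness of $\Xi^*$, with value $0$), the right-hand side becomes $-\inf_{\Pi(\mu,\nu)}\mathcal I$. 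With the easy inequality this closes the circle and identifies all three quantities.

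\emph{The cyclical-monotonicity characterization.} Assume now $c$ real-valued with $C(\mu,\nu)<\infty$; I would establish (d)$\Rightarrow$(a)$\Rightarrow$(b)$\Leftrightarrow$(c)$\Rightarrow$(d). If $\varphi$ is $c$-concave and $\varphi(x)+\varphi^c(y)=c(x,y)$ $\pi$-a.e., then integrating yields $\mathcal I(\pi)=J(\varphi,\varphi^c)\le\inf\mathcal I$, so $\pi$ is optimal (the needed integrability of $\varphi,\varphi^c$ coming from item (3) or a truncation argument). For (a)$\Rightarrow$(b) one argues by contradiction: an optimal plan whose support admits a finite cost-decreasing rearrangement can be perturbed on small neighbourhoods of the relevant points to produce a strictly cheaper competitor in $\Pi(\mu,\nu)$; the measurable version (Ambrosio--Pratelli, Schachermayer--Teichmann) requires only measurability of $c$, and one then takes $\Gamma$ to be a $\sigma$-compact --- closed if $c$ is continuous --- $c$-cyclically monotone set carrying $\pi$. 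Finally (b)/(c)$\Rightarrow$(d) is the Rockafellar construction: fix $(x_0,y_0)\in\Gamma$ and set
\[
\varphi(x):=\inf\Big\{\textstyle\sum_{i=0}^{n}\big(c(x_{i+1},y_i)-c(x_i,y_i)\big)\ :\ n\in\N,\ (x_i,y_i)\in\Gamma,\ x_{n+1}=x\Big\},
\]
which is finite by $c$-cyclical monotonicity, $c$-concave, and satisfies $\varphi+\varphi^c=c$ on $\Gamma$, hence $\pi$-a.e.; its measurability follows on taking the infimum over a countable family of chains.

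\emph{Item (3) and the main difficulty.} When $c(x,y)\le c_X(x)+c_Y(y)$ with $c_X$ $\mu$-measurable and $c_Y$ $\nu$-measurable, the potential $\varphi$ above obeys $\varphi(x)\le c_X(x)+\mathrm{const}$ and symmetrically for $\varphi^c$, so both positive parts are integrable; since $\int\varphi\,d\mu+\int\varphi^c\,d\nu=\mathcal I(\pi)$ is finite, $\varphi\in L^1(\mu)$ and $\varphi^c\in L^1(\nu)$, whence $(\varphi,\varphi^c)\in\Phi_c$ and $\sup_{\Phi_c}J=J(\varphi,\varphi^c)$ once $c_X\in L^1(\mu)$, $c_Y\in L^1(\nu)$. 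I expect the main obstacle to be precisely the two technical cores just sketched: the correct functional-analytic setup of the Fenchel--Rockafellar duality (choice of spaces, the constraint qualification, and the representation of the dual functionals by measures in $\Pi(\mu,\nu)$) and the measurable form of (a)$\Rightarrow$(b) --- which is exactly why the result is quoted rather than reproved here.
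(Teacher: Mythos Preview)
Your proposal is fine and in fact goes further than the paper does: the paper gives no proof of this theorem at all, simply citing it as a well-known result from Villani's book (Theorem~5.10 of \emph{Optimal Transport, Old and New}). Since you explicitly plan to cite the same reference for the full details, your approach and the paper's coincide; the sketch you provide of the Fenchel--Rockafellar duality and the Rockafellar-type construction of $\varphi$ is the standard route taken there.
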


We recall also that if $-c$ is Souslin, then every optimal transference plan $\pi$ is concentrated on a $c$-cyclically monotone set \cite{biacar:cmono}.

\section{Optimal transportation in geodesic spaces}
\label{S:Optimal}
Let $\mu, \nu \in \mathcal{P}(X)$ and consider the transportation problem with cost $c(x,y)= d_N(x,y)$, and let $\pi \in \Pi(\mu,\nu)$ be a $d_N$-cyclically monotone transference plan with finite cost. By inner regularity, we can assume that the optimal transference plan is concentrated on a $\sigma$-compact $d_N$-cyclically monotone set $\Gamma \subset \{d_N(x,y) < +\infty\}$. 

Consider the set
\begin{eqnarray}
\label{E:gGamma}
\Gamma' & :=&~ \bigg\{ (x,y) : \exists I \in \enne_0, (w_i,z_i) \in \Gamma \ \textrm{for}
 \ i = 0,\dots,I, \ z_I = y \crcr
&  &~ \qquad \qquad w_{I+1} = w_0 = x, \ \sum_{i=0}^I d_N(w_{i+1},z_i) - d_N(w_i,z_i) = 0 \bigg\}.
\end{eqnarray}
In other words, we concatenate points $(x,z), (w,y) \in \Gamma$ if they are initial and final point of a cycle with total cost $0$.

\begin{lemma}
\label{L:gGamma}
The following holds:
\begin{enumerate}
\item $\Gamma \subset \Gamma' \subset \{d_N(x,y) < +\infty\}$;
\item if $\Gamma$ is analytic, so is $\Gamma'$;
\item if $\Gamma$ is $d_N$-cyclically monotone, so is $\Gamma'$.
\end{enumerate}
\end{lemma}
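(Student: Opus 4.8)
The plan is to handle the three assertions separately, the first two being essentially bookkeeping and the third carrying the actual content. For \textbf{(1)}, I would obtain $\Gamma \subseteq \Gamma'$ by taking $I = 0$ with the trivial one-term chain $(w_0,z_0) = (x,y) \in \Gamma$, so that $w_1 = w_0 = x$, $z_0 = y$ and $\sum_{i=0}^{0}\bigl[d_N(w_{i+1},z_i) - d_N(w_i,z_i)\bigr] = d_N(x,y) - d_N(x,y) = 0$; and I would get $\Gamma' \subseteq \{d_N < +\infty\}$ by rewriting the defining identity as $\sum_{i=0}^{I} d_N(w_{i+1},z_i) = \sum_{i=0}^{I} d_N(w_i,z_i)$, noting that the right-hand side is finite because $\Gamma \subseteq \{d_N < +\infty\}$, hence so is the left-hand side, and in particular $d_N(x,y) = d_N(w_{I+1},z_I) < +\infty$.

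For \textbf{(2)}, I would write $\Gamma' = \bigcup_{I \in \N_{0}} \Gamma'_I$, where $\Gamma'_I$ is the image, under the projection onto the $(w_0,z_I)$-coordinates, of
\[
E_I := \Big\{ (w_0,z_0,\dots,w_I,z_I) \in X^{2(I+1)} : (w_i,z_i) \in \Gamma \ \forall i,\ \ \textstyle\sum_{i=0}^{I} d_N(w_{i+1},z_i) = \sum_{i=0}^{I} d_N(w_i,z_i) \Big\}
\]
with the cyclic convention $w_{I+1} = w_0$. Each condition $(w_i,z_i) \in \Gamma$ cuts out an analytic subset of $X^{2(I+1)}$ (a Borel preimage of the analytic set $\Gamma$ under a coordinate projection), and a finite intersection of these stays analytic. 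Since $d_N$ is Borel, the two sums are Borel $[0,+\infty]$-valued functions on $X^{2(I+1)}$, so the locus where they agree is Borel; intersecting it with the analytic set above leaves an analytic set. Projections preserve $\Sigma^1_1$, so each $\Gamma'_I$ is analytic and hence so is the countable union $\Gamma'$.

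For \textbf{(3)}, fix $(x_1,y_1),\dots,(x_n,y_n) \in \Gamma'$ and, for each $j$, a witnessing chain $(w^j_i,z^j_i) \in \Gamma$, $i = 0,\dots,I_j$, with $w^j_0 = w^j_{I_j+1} = x_j$, $z^j_{I_j} = y_j$ and $\sum_{i=0}^{I_j}\bigl[d_N(w^j_{i+1},z^j_i) - d_N(w^j_i,z^j_i)\bigr] = 0$. Abbreviate $a^j_i := d_N(w^j_i,z^j_i)$ and $b^j_i := d_N(w^j_{i+1},z^j_i)$; all $a^j_i$ are finite, and since $\sum_i b^j_i = \sum_i a^j_i < +\infty$ with $b^j_i \geq 0$, all $b^j_i$ are finite too. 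Solving the defining identity of chain $j$ for its last shifted term gives the exact equality $d_N(x_j,y_j) = b^j_{I_j} = \sum_{i=0}^{I_j} a^j_i - \sum_{i=0}^{I_j-1} b^j_i$. Then I would concatenate all the pairs $(w^j_i,z^j_i)$ into one cyclic family — chain $1$ in the order $i=0,\dots,I_1$, then chain $2$, and so on through chain $n$, then back to the start of chain $1$ — and apply $d_N$-cyclical monotonicity of $\Gamma$ to it. The diagonal sum is $\sum_{j,i} a^j_i$; the cross terms internal to chain $j$ are exactly $b^j_0,\dots,b^j_{I_j-1}$, while the step from the last pair of chain $j$ to the first pair of chain $j+1$ produces the cross term $d_N(w^{j+1}_0,z^j_{I_j}) = d_N(x_{j+1},y_j)$ (cyclically, with $x_{n+1} := x_1$). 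Hence $\sum_j\sum_{i=0}^{I_j} a^j_i \leq \sum_j\sum_{i=0}^{I_j-1} b^j_i + \sum_j d_N(x_{j+1},y_j)$; since the first two sums are finite this rearranges to $\sum_j\bigl(\sum_{i=0}^{I_j} a^j_i - \sum_{i=0}^{I_j-1} b^j_i\bigr) \leq \sum_j d_N(x_{j+1},y_j)$, and the left-hand side equals $\sum_j d_N(x_j,y_j)$ by the exact equality above. This is exactly the $d_N$-cyclical monotonicity inequality for $\Gamma'$.

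The only genuinely delicate step is the combinatorics in \textbf{(3)}: one must order the concatenated chains so that the cross terms internal to each chain telescope against that chain's own zero-cost identity, leaving precisely the ``new'' cross terms $d_N(x_{j+1},y_j)$, and one must track the finiteness of all the costs involved so that subtracting $\sum_{j,i} b^j_i$ from both sides of the cyclical-monotonicity inequality is legitimate. Once the cyclic ordering is set up correctly, the inequality for $\Gamma'$ drops out with no estimates at all.
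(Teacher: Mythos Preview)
Your proof is correct and follows essentially the same approach as the paper: the trivial one-term chain for $\Gamma\subset\Gamma'$, the countable-union-of-projections argument for analyticity, and the concatenation of witnessing chains followed by an application of the $d_N$-cyclical monotonicity of $\Gamma$ for part (3). If anything, you are more careful than the paper in two places: in (2) you phrase the constraint as equality of two $[0,+\infty]$-valued Borel sums rather than as a difference equal to zero, and in (3) you explicitly verify that all the $b^j_i$ are finite before subtracting them across the inequality; the paper's writeup glosses over both of these points.
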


\begin{proof}
For the first point, set $I=0$ and $(w_{n,0},z_{n,0}) = (x,y)$ for the first inclusion. If $d_N(x,y) = +\infty$, then $(x,y) \notin \Gamma$ and all finite set of points in $\Gamma$ are bounded.

For the second point, observe that
\begin{eqnarray*}
\Gamma' &=&~ \bigcup_{I \in \enne_0} P_{1,2I+1} (A_I) \crcr
&=&~ \bigcup_{I \in \enne_0} P_{1,2I+1} \bigg( \prod_{i=0}^I \Gamma \cap \bigg\{ \prod_{i=0}^I (w_i,z_i) : \sum_{i=0}^I d_N(w_{i+1},z_i) - d_N(w_i,z_i) = 0, w_{I+1} = w_0 \bigg\} \bigg).
\end{eqnarray*}
For each $I \in \N_0$, since $d_N$ is Borel, it follows that
\begin{eqnarray*}
\bigg\{ \prod_{i=0}^I (w_i,z_i) : \sum_{i=0}^I d_N(w_{i+1},z_i) - d_N(w_i,z_i) = 0, w_{I+1} = w_0 \bigg\}
\end{eqnarray*}
is Borel in $\prod_{i=0}^I (X \times X)$, so that for $\Gamma$ analytic each set $A_{n,I}$ is analytic. Hence $P_{1,2I+1}(A_I)$ is analytic, and since the class $\Sigma^1_1$ is closed under countable unions and intersections it follows that $\Gamma'$ is analytic.

For the third point, observe that for all $(x_j,y_j) \in \Gamma'$, $j=0,\dots,J$, there are $(w_{j,i},z_{j,i}) \in \Gamma$, $i = 0,\dots,I_j$, such that
\begin{eqnarray*}
d_N(x_j,y_j) + \sum_{i=0}^{I_j-1} d_N(w_{j,i+1},z_{j,i}) - \sum_{i=0}^{I_j} d_N(w_{j,i},z_{j,i}) = 0.
\end{eqnarray*}
Hence we can write for $x_{J+1} = x_0$, $w_{j,I_j+1} = w_{j+1,0}$, $w_{J+1,0} = w_{0,0}$
\begin{eqnarray*}
\sum_{j=0}^J d_N(x_{j+1},y_j) - d_N(x_j,y_j) =&~ \sum_{j=0}^J \sum_{i=0}^{I_j} d_N(w_{j,i+1},z_{j,i}) - d_N(w_{j,i},z_{j,i}) \geq 0,
\end{eqnarray*}
using the $d_N$-cyclical monotonicity of $\Gamma$.
\end{proof}

\begin{definition}[Transport rays]
\label{D:Gray}
Define the \emph{set of oriented transport rays}
\begin{equation}
\label{E:trG}
G := \Big\{ (x,y): \exists (w,z) \in \Gamma', d_N(w,x) + d_N(x,y) + d_N(y,z) = d_N(w,z) \Big\}.
\end{equation}

For $x \in X$, the \emph{outgoing transport rays from $x$} is the set $G(x)$ and the \emph{incoming transport rays in $x$} is the set $G^{-1}(x)$. Define the \emph{set of transport rays} as the set
\begin{equation}
\label{E:Rray}
R := G \cup G^{-1}.
\end{equation}
\end{definition}
The set $G$ is the set of all couples of points on oriented geodesics with endpoints in $\Gamma'$.
In $R$ the couples are non oriented.

\begin{lemma}
\label{L:analGR}
The following holds:
\begin{enumerate}
\item $G$ is $d_N$-cyclically monotone;
\item $\Gamma' \subset G \subset \{d_N(x,y) < +\infty\}$;
\item the sets $G$, $R := G \cup G^{-1}$ are analytic.
\end{enumerate}
\end{lemma}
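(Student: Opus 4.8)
The plan is to verify the three assertions of Lemma \ref{L:analGR} in order, largely by unwinding the definition \eqref{E:trG} of $G$ and appealing to Lemma \ref{L:gGamma}. The $d_N$-cyclical monotonicity of $G$ is the substantive point; the other two are bookkeeping.

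For the inclusion $\Gamma' \subset G \subset \{d_N(x,y) < +\infty\}$, I would argue as follows. If $(x,y) \in \Gamma'$, then by Lemma \ref{L:gGamma}(1) we have $(x,y) \in \Gamma' \subset \{d_N < +\infty\}$, and we may take $(w,z) = (x,y) \in \Gamma'$ in \eqref{E:trG}: with $w = x$ and $z = y$ the equation $d_N(w,x) + d_N(x,y) + d_N(y,z) = d_N(w,z)$ reduces to $0 + d_N(x,y) + 0 = d_N(x,y)$, so $(x,y) \in G$. Conversely, if $(x,y) \in G$ with witness $(w,z) \in \Gamma'$, then $d_N(x,y) \le d_N(w,x) + d_N(x,y) + d_N(y,z) = d_N(w,z) < +\infty$ since $(w,z) \in \{d_N < +\infty\}$, again by Lemma \ref{L:gGamma}(1). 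For analyticity of $G$: writing
\[
G = P_{1,2}\Big( \big\{ (x,y,w,z) : d_N(w,x) + d_N(x,y) + d_N(y,z) = d_N(w,z) \big\} \cap (X \times X \times \Gamma') \Big),
\]
the set in braces is Borel because $d_N$ is a Borel function on $X \times X$ (here one uses that $d_N$ is finite on $\Gamma'$ so the sums make sense, or equivalently intersects first with the Borel set where all four distances are finite), and $\Gamma'$ is analytic by Lemma \ref{L:gGamma}(2); since the class $\Sigma^1_1$ is closed under finite intersections and under projections (Section \ref{Ss:univmeas}), $G$ is analytic. Then $G^{-1}$ is analytic, being the image of $G$ under the Borel coordinate swap $(x,y) \mapsto (y,x)$, and $R = G \cup G^{-1}$ is analytic since $\Sigma^1_1$ is closed under countable (in particular finite) unions.

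The main obstacle is the $d_N$-cyclical monotonicity of $G$. The idea is to reduce a cycle in $G$ to a cycle in $\Gamma'$ and invoke Lemma \ref{L:gGamma}(3). Take points $(x_j, y_j) \in G$, $j = 0, \dots, J$, with $x_{J+1} = x_0$; for each $j$ pick a witness $(w_j, z_j) \in \Gamma'$ with
\[
d_N(w_j, x_j) + d_N(x_j, y_j) + d_N(y_j, z_j) = d_N(w_j, z_j).
\]
I want to show $\sum_j d_N(x_{j+1}, y_j) \ge \sum_j d_N(x_j, y_j)$. Using the triangle inequality on the geodesic metric $d_N$, bound $d_N(w_{j+1}, z_j) \le d_N(w_{j+1}, x_{j+1}) + d_N(x_{j+1}, y_j) + d_N(y_j, z_j)$ for each $j$ (indices mod $J+1$). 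Summing over $j$, applying the $d_N$-cyclical monotonicity of $\Gamma'$ to the points $(w_j, z_j)$ — namely $\sum_j d_N(w_{j+1}, z_j) \ge \sum_j d_N(w_j, z_j)$ — and then substituting the witness identities $d_N(w_j, z_j) = d_N(w_j, x_j) + d_N(x_j, y_j) + d_N(y_j, z_j)$, the terms $d_N(w_j, x_j)$ and $d_N(y_j, z_j)$ cancel between the two sides (the $w$-to-$x$ terms match after reindexing $w_{j+1} \to$ paired with $z_j$), leaving exactly $\sum_j d_N(x_{j+1}, y_j) \ge \sum_j d_N(x_j, y_j)$, which is the claim. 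I would carry out this cancellation carefully, being attentive to the cyclic reindexing $w_{J+1} = w_0$ etc., since that is where a sign error would creep in; the rest is just triangle inequality plus the already-established Lemma \ref{L:gGamma}(3).
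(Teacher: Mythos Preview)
Your proposal is correct and follows essentially the same approach as the paper: parts (2) and (3) are handled identically, and for (1) both you and the paper reduce the $G$-cycle $(x_j,y_j)$ to the $\Gamma'$-cycle $(w_j,z_j)$ via triangle inequalities together with the witness identity $d_N(w_j,z_j)=d_N(w_j,x_j)+d_N(x_j,y_j)+d_N(y_j,z_j)$, then invoke Lemma~\ref{L:gGamma}(3). The only cosmetic difference is that the paper chains the inequalities pointwise before summing, while you apply a single triangle inequality and sum immediately; the telescoping cancellation is the same.
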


\begin{proof}
The second point follows by the definition: if $(x,y) \in \Gamma'$, just take $(w,z) = (x,y)$ in the r.h.s. of (\ref{E:trG}).

The third point is consequence of the fact that
\[
G = P_{34} \Big( \big( \Gamma' \times X \times X \big) \cap \Big\{ (w,z,x,y) : d_N(w,x) + d_N(x,y) + d_N(y,z) = d_N(w,z) \Big\} \Big),
\]
and the result follows from the properties of analytic sets.

The first point follows from the following observation: if $(x_i,y_i) \in \gamma_{[w_i,z_i]}$, then from triangle inequality
\begin{eqnarray*}
d_N(x_{i+1},y_i) - d_N(x_i,y_i)  &\geq &~ d_N(x_{i+1},z_i) - d_N(z_i,y_i) - d_N(x_i,y_i) \crcr
&=&~ d_N(x_{i+1},z_i) - d_N(x_i,z_i) \crcr
&\geq&~  d_N(w_{i+1},z_i) - d_N(w_{i+1},x_{i+1})  - d_N(x_i,z_i) \crcr
&=&~  d_N(w_{i+1},z_i) - d_N(w_i,z_i) + d_N(w_i,x_{i}) - d_N(w_{i+1},x_{i+1}).
\end{eqnarray*}
Since $(w_{n+1},x_{n+1})=(w_{1},x_{1})$, it follows that
\[
\sum_{i=1}^{n} d_N(x_{i+1},y_i) - d_N(x_i,y_i) \geq \sum_{i=1}^{n} d_N(w_{i+1},z_i) - d_N(w_i,z_i) \geq 0.
\]
Hence the set $G$ is $d_N$-cyclically monotone.
\end{proof}

\begin{definition} Define the \emph{transport sets}
\begin{subequations}
\label{E:TR0}
\begin{eqnarray}
\label{E:TR}
\mathcal T :=&~ P_1 \big( G^{-1} \setminus \{x = y\} \big) \cap P_1 \big( G \setminus \{x = y\} \big), \\
\label{E:TRe}
\mathcal T_e :=&~ P_1 \big( G^{-1} \setminus \{x = y\} \big) \cup P_1 \big( G \setminus \{x = y\} \big).
\end{eqnarray}
\end{subequations}
\end{definition}

Since $G$ and $G^{-1}$ are analytic sets, $\mathcal{T}$, $\mathcal{T}_e$ are analytic. The subscript $e$ refers to the endpoints of the geodesics: clearly we have
\begin{equation}
\label{E:RTedef}
\mathcal{T}_e = P_1(R \setminus \{x = y\}).
\end{equation}

The following lemma shows that we have only to study the Monge problem in $\mathcal{T}_e$.

\begin{lemma}
\label{L:mapoutside}
It holds $\pi(\mathcal{T}_e \times \mathcal{T}_e \cup \{x = y\}) = 1$.
\end{lemma}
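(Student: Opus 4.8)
The plan is to show that $\pi$ is concentrated on the set $(\mathcal T_e\times \mathcal T_e)\cup\{x=y\}$ by arguing that $\pi$-almost every pair $(x,y)$ with $x\neq y$ lies in $\mathcal T_e\times \mathcal T_e$. First I would recall that $\pi$ is concentrated on $\Gamma$, and $\Gamma\subset\Gamma'\subset G$ by Lemma \ref{L:gGamma} and Lemma \ref{L:analGR}. So for $\pi$-a.e.\ $(x,y)$ we have $(x,y)\in\Gamma'\subset G$, hence also $(x,y)\in G^{-1}$ read as $(y,x)$; more precisely $(x,y)\in G\cap (G^{-1})^{-1}$ in the sense that $(x,y)\in G$ and $(y,x)\in G^{-1}$. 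The point is then purely set-theoretic: if $(x,y)\in\Gamma'$ and $x\neq y$, I must produce witnesses showing $x\in\mathcal T_e$ and $y\in\mathcal T_e$.

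For $x$: since $(x,y)\in\Gamma'\subset G$ with $x\neq y$, we have $(x,y)\in G\setminus\{x=y\}$, so $x\in P_1(G\setminus\{x=y\})\subset\mathcal T_e$ by \eqref{E:TRe}. For $y$: since $(x,y)\in G$, by the symmetry built into $R=G\cup G^{-1}$ we get $(y,x)\in G^{-1}$, i.e.\ $(y,x)\in R$, and $y\neq x$, so $y\in P_1(R\setminus\{x=y\})=\mathcal T_e$ by \eqref{E:RTedef}. (Equivalently, one checks directly from \eqref{E:trG} that if $(w,z)\in\Gamma'$ realizes $(x,y)\in G$ via $d_N(w,x)+d_N(x,y)+d_N(y,z)=d_N(w,z)$, then the same witness $(w,z)$ shows $(y,x)\in G^{-1}$ since $G^{-1}$ is by definition the set of reversed pairs, so $y\in P_1(G^{-1}\setminus\{x=y\})$.) Therefore $(x,y)\in\mathcal T_e\times\mathcal T_e$ whenever $(x,y)\in\Gamma'$ and $x\neq y$.

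Putting this together: $\Gamma\subset\{x=y\}\cup(\Gamma'\setminus\{x=y\})\subset\{x=y\}\cup(\mathcal T_e\times\mathcal T_e)$, where I used $\Gamma\subset\Gamma'$. Since $\pi(\Gamma)=1$, it follows that $\pi\big((\mathcal T_e\times\mathcal T_e)\cup\{x=y\}\big)=1$, which is the claim. A minor technical point worth noting is measurability of the set $\{x=y\}$ and of $\mathcal T_e\times\mathcal T_e$: the diagonal is closed hence Borel in $X\times X$, and $\mathcal T_e$ is analytic (as remarked after its definition), hence universally measurable, so $\pi$ assigns it a well-defined value; this is the only place where one invokes the measure-theoretic framework rather than pure set inclusions.

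I do not expect a serious obstacle here — the statement is essentially unwinding the definitions of $G$, $G^{-1}$, $R$, $\mathcal T_e$ and the chain of inclusions $\Gamma\subset\Gamma'\subset G\subset R$. The only thing to be careful about is not to conflate "$(x,y)\in G^{-1}$" with "$(x,y)\in G$ reversed": keeping the bookkeeping of first/second projections straight when passing between $G$ and $G^{-1}$ is the one spot where a slip could occur, and using the reformulation $\mathcal T_e = P_1(R\setminus\{x=y\})$ from \eqref{E:RTedef} is the cleanest way to avoid it.
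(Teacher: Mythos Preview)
Your argument is correct and is essentially the same as the paper's: both proofs observe that $\Gamma\subset G$, so any $(x,y)\in\Gamma$ with $x\neq y$ gives $x\in P_1(G\setminus\{x=y\})\subset\mathcal{T}_e$ and $y\in P_1(G^{-1}\setminus\{x=y\})\subset\mathcal{T}_e$, whence $\Gamma\setminus(\mathcal{T}_e\times\mathcal{T}_e)\subset\{x=y\}$. Your added remarks on measurability and the detour through $\Gamma'$ are harmless but not needed.
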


\begin{proof}
If $x \in P_1(\Gamma \setminus \{x=y\})$, then $x \in G^{-1}(y) \setminus \{y\}$. Similarly, $y \in P_2(\Gamma \setminus \{x=y\})$ implies that $y \in G(x) \setminus \{x\}$. Hence $\Gamma \setminus \mathcal{T}_e \times \mathcal{T}_e \subset \{x = y\}$.
\end{proof}

As a consequence, $\mu(\mathcal{T}_e) = \nu(\mathcal{T}_e)$ and any maps $T$ such that for $\nu \llcorner_{\mathcal{T}_e} = T_\sharp \mu \llcorner_{\mathcal{T}_e}$ can be extended to a map $T'$ such that $\nu = T_\sharp \mu$ with the same cost by setting

\begin{equation} \label{E:extere}
T'(x) = 
\begin{cases}
 T(x)  &  x \in \mathcal{T}_e  \\  x & x \notin \mathcal{T}_e 
\end{cases}
\end{equation}


\begin{definition}
\label{D:endpoint}
Define the multivalued \emph{endpoint graphs} by:
\begin{subequations}
\label{E:endpoint0}
\begin{eqnarray}
\label{E:endpointa}
a :=&~ \big\{ (x,y) \in G^{-1} : G^{-1}(y) \setminus \{y\} = \emptyset \big\}, \\
\label{E:endpointb}
b :=&~ \big\{ (x,y) \in G : G(y) \setminus \{y\} = \emptyset \big\}.
\end{eqnarray}
\end{subequations}
We call $P_2(a)$ the set of \emph{initial points} and $P_2(b)$ the set of \emph{final points}.
\end{definition}

\begin{proposition}
The following holds:
\begin{enumerate}
\item the sets
\[
a,b \subset X \times X, \quad a(A), b(A) \subset X,
\]
belong to the $\mathcal{A}$-class if $A$ analytic;
\item $a \cap b \cap \mathcal{T}_e \times X = \emptyset$;
\item $a(\mathcal{T}) = a(\mathcal{T}_e)$, $b(\mathcal{T}) = b(\mathcal{T}_e)$;
\item $\mathcal{T}_e = \mathcal{T} \cup a(\mathcal{T}) \cup b(\mathcal{T})$, $\mathcal{T} \cap (a(\mathcal{T}) \cup b(\mathcal{T})) = \emptyset$.
\end{enumerate}
\end{proposition}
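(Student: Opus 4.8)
The plan is to handle the four assertions in the order they are stated, since each relies on the structural facts already assembled (the analyticity of $G$, $G^{-1}$ from Lemma \ref{L:analGR}, and the definitions of $\mathcal{T}$, $\mathcal{T}_e$). First, for the measurability claim (1), I would rewrite $a$ and $b$ using projections and the complement of a projection. Observe that $G^{-1}(y) \setminus \{y\} = \emptyset$ is equivalent to saying $y \notin P_2\big((G^{-1} \setminus \{x=y\})\big)$, i.e. $y$ is \emph{not} the second coordinate of any non-trivial incoming ray. Since $G^{-1} \setminus \{x=y\}$ is analytic, its projection onto the second coordinate is analytic, so its complement is coanalytic, hence in $\mathcal{A}$; intersecting with the analytic set $G^{-1}$ (to impose $(x,y) \in G^{-1}$) keeps us in $\mathcal{A}$ by the fact that $\mathcal{A}$ is a $\sigma$-algebra containing $\Sigma^1_1$ and $\Pi^1_1$. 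The sets $a(A)$, $b(A)$ for $A$ analytic are then images of $\mathcal{A}$-sets under the projections $P_1$, $P_2$, which land in $\Delta^1_2 \subset$ (the universally measurable sets); more carefully, one writes $a(A) = P_2(a \cap (A \times X))$ and uses that $a$, being the intersection of an analytic set with a coanalytic one, is in $\mathcal{A}$, and that $\mathcal{A}$-sets have $\mathcal{A}$-measurable (indeed universally measurable) projections via Von Neumann's selection theorem, Theorem \ref{T:vanneuma}.

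For (2), suppose $(x,y) \in a \cap b$ with $x \in \mathcal{T}_e$. From $(x,y) \in a \subset G^{-1}$ and $(x,y) \in b \subset G$ we get both $(y,x) \in G$ and $(x,y) \in G$; combined with $G^{-1}(y) \setminus \{y\} = \emptyset$ and $G(y) \setminus \{y\} = \emptyset$, the only way $(y,x) \in G$ and $(x,y)\in G$ can hold is $x = y$. But $x \in \mathcal{T}_e = P_1(R \setminus \{x=y\})$ forces $x$ to lie on some non-trivial ray, and one checks using the $d_N$-cyclical monotonicity and the geodesic structure that $y$ being simultaneously an initial and a final point with $x=y$ contradicts $x$ lying in the interior or as a proper endpoint of a non-degenerate geodesic — more precisely, if $x=y$ then $x$ is both an initial and final point, so $G(x)\setminus\{x\}=\emptyset=G^{-1}(x)\setminus\{x\}$, whence $x \notin P_1(R\setminus\{x=y\})=\mathcal{T}_e$, a contradiction. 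Hence $a \cap b \cap (\mathcal{T}_e \times X) = \emptyset$.

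For (3), the inclusion $a(\mathcal{T}) \subset a(\mathcal{T}_e)$ is immediate from $\mathcal{T} \subset \mathcal{T}_e$. For the reverse, take $y \in a(\mathcal{T}_e)$, so there is $x \in \mathcal{T}_e$ with $(x,y) \in a$, i.e. $y$ is an initial point and $(x,y) \in G^{-1}$, $x \neq y$ (since if $x = y$ then $x$ is an initial point, and being in $\mathcal{T}_e$ it must also have an outgoing ray, so $x \in P_1(G\setminus\{x=y\})$; I would trace the geodesic through $x$ to produce a point $x'$ strictly between, which then lies in both $P_1(G\setminus\{x=y\})$ and $P_1(G^{-1}\setminus\{x=y\})$, hence in $\mathcal{T}$, with $(x',y) \in a$). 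This reparametrization-along-the-geodesic argument — sliding the base point $x$ to an interior point of the transport ray on which the pair sits — is the recurring device, and it is where the geodesic completeness of $(X,d_N)$ and the definition of $G$ via $\Gamma'$ genuinely enter. The same argument gives $b(\mathcal{T}) = b(\mathcal{T}_e)$.

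For (4), the inclusion $\mathcal{T} \cup a(\mathcal{T}) \cup b(\mathcal{T}) \subset \mathcal{T}_e$ follows from (3) together with $\mathcal{T} \subset \mathcal{T}_e$ and the fact that initial/final points of rays with endpoints in $\Gamma'$ lie on those rays, hence in $P_1(R \setminus \{x=y\}) = \mathcal{T}_e$. Conversely, if $x \in \mathcal{T}_e$, then $x$ lies on some non-degenerate transport ray; either $x$ has both a non-trivial incoming and a non-trivial outgoing ray (so $x \in \mathcal{T}$), or one of the two is empty, in which case $x$ is by definition a final point (if no outgoing) or an initial point (if no incoming), placing $x$ in $b(\mathcal{T}_e) = b(\mathcal{T})$ or $a(\mathcal{T}_e) = a(\mathcal{T})$ respectively. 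Disjointness $\mathcal{T} \cap (a(\mathcal{T}) \cup b(\mathcal{T})) = \emptyset$ is exactly the statement that a point with both a non-trivial incoming and outgoing ray is neither initial nor final, which is immediate from the definitions of $a$, $b$. The main obstacle I anticipate is the bookkeeping in (2) and (3): making the "slide to an interior point" argument rigorous requires care about whether such an interior point exists and stays inside the relevant projected sets, and about degenerate cases where a ray has zero $d_N$-length; these are handled by the geodesic property and by the convention $\Gamma \subset \{d_N < +\infty\}$, but they must be spelled out.
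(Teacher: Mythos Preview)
Your overall strategy matches the paper's, and for parts (2)--(4) you give correct arguments that are in fact more detailed than the paper's terse ``the other points follow easily.'' The sliding-to-an-interior-point device you use for (3) is exactly what is needed and is implicit in the paper's setup.

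There is, however, a genuine gap in your treatment of $a(A)$ and $b(A)$ in part (1). You assert that $\mathcal{A}$-sets have $\mathcal{A}$-measurable projections, invoking Von Neumann's selection theorem. This is incorrect on two counts. First, Theorem~\ref{T:vanneuma} produces $\mathcal{A}$-measurable \emph{sections} of \emph{analytic} sets; it says nothing about projections of general $\mathcal{A}$-sets. Second, projections of $\mathcal{A}$-sets are not in general in $\mathcal{A}$, nor even universally measurable in ZFC: already the projection of a coanalytic set is $\Sigma^1_2$, and $\Sigma^1_2$ sets can fail to be Lebesgue measurable. So writing $a(A) = P_2(a \cap (A \times X))$ and then projecting an $\mathcal{A}$-set does not establish $a(A) \in \mathcal{A}$.

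The repair is easy and uses the structure you already identified. Since $a = G^{-1} \cap (X \times N)$ with $N := \{y : G^{-1}(y)\setminus\{y\}=\emptyset\}$ coanalytic, one has
\[
a(A) = P_2\big((G^{-1} \cap (A \times X)) \cap (X \times N)\big) = P_2\big(G^{-1} \cap (A \times X)\big) \cap N,
\]
which is the intersection of an analytic set (projection of analytic) with a coanalytic set, hence in $\mathcal{A}$. This is exactly the paper's route, which introduces the auxiliary analytic set $C = (G \times X) \cap (X \times G) \cap \mathcal{T}_e^3$ and writes $a(A)$, $b(A)$ explicitly as differences of two analytic sets. (There is also a minor indexing slip in your first paragraph: $G^{-1}(y)\setminus\{y\}=\emptyset$ is equivalent to $y \notin P_1(G^{-1}\setminus\{x=y\})$, equivalently $y \notin P_2(G\setminus\{x=y\})$, not $y \notin P_2(G^{-1}\setminus\{x=y\})$.)
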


\begin{proof}
Define
\[
C := \big\{ (x,y,z) \in \mathcal{T}_e \times \mathcal{T}_e \times \mathcal{T}_e: y \in G(x), z \in G(y) \big\} = (G \times X) \cap (X \times G) 
\cap \mathcal{T}_e \times \mathcal{T}_e \times \mathcal{T}_e,
\]
that is clearly analytic. Then
\[
b = \big\{ (x,y) \in G : y \in G(x), G(y) \setminus \{y\} = \emptyset \} = G \setminus P_{12}(C \setminus X \times \{y=z\}),
\]
\[
b(A) = \big\{ y: y \in G(x), G(y) \setminus \{y\} = \emptyset, x \in A \} = P_2(G \cap A \times X) \setminus P_2(C \setminus X \times \{y=z\}).
\]
A similar computation holds for $a$:
\[
a = G^{-1} \setminus P_{23}(C \setminus \{x=y\} \times X), \quad a(A) = P_1(G_{S} \cap X \times A) \setminus P_1(C \setminus \{x=y\} \times X).
\]
Hence $a,b \in \mathcal{A}(X \times X)$, $a(A), b(A) \in \mathcal{A}(X)$, being the intersection of an analytic set with a coanalytic one.
If $x \in \mathcal{T}_e \setminus \mathcal{T}$, then it follows that  $G(x)=\{x\}$ or $G^{-1}(x)=\{x\}$ hence $x \in a(x) \cup b(x)$.

The other points follow easily.
\end{proof}

\begin{definition}[Chain of transport rays]\label{D:concat}
Define the set of \emph{chain of transport rays}
\begin{eqnarray}
\label{E:partiz}
H &: = &~ \bigg\{ (x,y) \in \mathcal{T}_{e} \times \mathcal{T}_{e}: \exists I \in \enne_{0}, z_{i}\in \mathcal{T} \ \textrm{for}\ 1 \leq i\leq I, \crcr
&&~ \qquad \qquad  (z_{i},z_{i+1}) \in R, \ 0\leq i \leq I+1, \  z_{0}=x, \ z_{I+1}=y  \bigg\}.
\end{eqnarray}
\end{definition}
Using similar techniques of Lemma \ref{L:gGamma} it can be shown that  $H$ is analytic.

\begin{proposition}
\label{P:equiv}
The set $H \cap \mathcal{T} \times \mathcal{T}$ is an equivalence relation on $\mathcal{T}$. The set $G$ is a partial order relation on $\mathcal{T}_e$.
\end{proposition}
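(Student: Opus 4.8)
The plan is to prove the two assertions of Proposition \ref{P:equiv} separately, starting with the easier one.

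\textbf{The partial order.} First I would show that $G$ is a partial order on $\mathcal{T}_e$. Reflexivity is immediate: for $x \in \mathcal{T}_e$ we have $d_N(x,x)=0<+\infty$ and $(x,x)\in G$ by taking $(w,z)=(x,y)$ in (\ref{E:trG}) with a degenerate geodesic, or more carefully by noting that $x \in \mathcal{T}_e$ means $x$ lies on some oriented transport ray. For transitivity, suppose $(x,y),(y,z)\in G$, so there are $(w_1,z_1),(w_2,z_2)\in\Gamma'$ with $d_N(w_1,x)+d_N(x,y)+d_N(y,z_1)=d_N(w_1,z_1)$ and $d_N(w_2,y)+d_N(y,z)+d_N(z,z_2)=d_N(w_2,z_2)$; here the key point is that $x,y$ lie on a single geodesic from $w_1$ to $z_1$ and $y,z$ on one from $w_2$ to $z_2$, and I must glue these at $y$. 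The gluing is legitimate precisely because $G$ is $d_N$-cyclically monotone (Lemma \ref{L:analGR}): applying cyclical monotonicity to the pairs coming from the two geodesics forces the concatenated path through $x,y,z$ to be length-minimizing, hence $(x,z)\in G$ — this is the standard argument that overlapping optimal rays that share an interior point lie on a common geodesic. Antisymmetry: if $(x,y)\in G$ and $(y,x)\in G$ with $x\neq y$, then concatenating gives a nontrivial closed loop lying on geodesics in $\Gamma'$, contradicting $d_N$-cyclical monotonicity (a cycle of strictly positive length with zero net cost). I expect this antisymmetry/transitivity gluing — making precise that two transport rays sharing a point concatenate to a transport ray — to be the technical heart, and it should be isolated as the one genuine lemma.

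\textbf{The equivalence relation.} For $H\cap\mathcal{T}\times\mathcal{T}$, reflexivity for $x\in\mathcal{T}$ follows by taking $I=0$ and $z_0=z_1=x$, using $(x,x)\in R$. Symmetry is immediate from the definition (\ref{E:partiz}): reversing the chain $z_0,\dots,z_{I+1}$ gives a chain witnessing $(y,x)\in H$, since $R=G\cup G^{-1}$ is symmetric and the intermediate points $z_i$ still lie in $\mathcal{T}$. Transitivity is the concatenation of two chains: if $x$ and $y$ are joined by a chain with interior points in $\mathcal{T}$, and $y$ and $z$ by another, I want to splice them at $y$; since $y\in\mathcal{T}$, it is an admissible interior point, so the concatenated sequence is again a chain of transport rays through points of $\mathcal{T}$, giving $(x,z)\in H$. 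The only subtlety is checking the index bookkeeping in (\ref{E:partiz}) — that $y$ itself becomes a legitimate interior node, which holds exactly because we intersected with $\mathcal{T}\times\mathcal{T}$ so $y\notin a(\mathcal{T})\cup b(\mathcal{T})$.

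\textbf{Main obstacle.} The crux is the concatenation-of-rays lemma used for transitivity and antisymmetry of $G$: given $(x,y)\in G$ via a geodesic with endpoints $(w_1,z_1)\in\Gamma'$ and $(y,z)\in G$ via $(w_2,z_2)\in\Gamma'$, one must produce a \emph{single} $(w,z)\in\Gamma'$ (or directly verify the defining additivity identity) realizing $(x,z)\in G$. I would handle this by the four-point cyclical monotonicity estimate already displayed in the proof of Lemma \ref{L:analGR}: the chain of inequalities there, applied to the cycle built from the two rays, collapses to equality, forcing the triangle inequalities $d_N(x,y)+d_N(y,z)=d_N(x,z)$ and the corresponding extensions to the endpoints to be saturated, which is exactly membership in $G$. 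Everything else is routine verification of the axioms.
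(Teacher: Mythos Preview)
Your proposal is correct and follows the same approach as the paper: verify the axioms directly from the definitions, with the $d_N$-cyclical monotonicity of $G$ established in Lemma \ref{L:analGR} doing the work for the transitivity (and antisymmetry) of $G$, and simple chain-concatenation at the interior point $y\in\mathcal{T}$ giving transitivity of $H$. You are in fact more thorough than the paper's own proof, which is only a few lines and does not even spell out the antisymmetry argument.
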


\begin{proof}
Using the definition of $H$, one has in $\mathcal{T}$:
\begin{enumerate}
\item $x \in \mathcal{T}$ clearly implies that $(x,x) \in H$;
\item since $R$ is symmetric, if $y \in H(x)$ then $x \in H(y)$;
\item if $y \in H(x)$, $z \in H(y)$, $x,y,z \in \mathcal{T}$. 
Glue the path from $x$ to $y$ to the one from $y$ to $z$. Since $y \in \mathcal{T}$, $z \in H(x)$.
\end{enumerate}

The second part follows similarly:
\begin{enumerate}
\item $x \in \mathcal{T}_e$ implies that
\[
\exists (x,y) \in \big( G \setminus \{x=y\} \big) \cup \big( G^{-1} \setminus \{x=y\} \big),
\]
so that in both cases $(x,x) \in G$;
\item $(x,y), (y,z) \in G \setminus \{x=y\}$ implies by $d_N$-cyclical monotonicity that $(x,z) \in G$.
\end{enumerate}
\end{proof}

We finally show that we can assume that the $\mu$-measure of final points and the $\nu$-measure of the initial points are $0$.

\begin{lemma}
\label{L:finini0}
The sets $G \cap b(\mathcal{T}) \times X$, $G \cap X \times a(\mathcal{T})$ is a subset of the graph of the identity map.
\end{lemma}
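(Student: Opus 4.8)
The plan is to argue by pure unwinding of definitions: the assertion is essentially a tautology once one recalls, from Definition \ref{D:endpoint} and the Proposition immediately following it, that a point of $b(\mathcal{T})$ is a \emph{final point} of a transport ray and a point of $a(\mathcal{T})$ is an \emph{initial point}; concretely, $x \in b(\mathcal{T})$ entails $G(x) \setminus \{x\} = \emptyset$ and $y \in a(\mathcal{T})$ entails $G^{-1}(y) \setminus \{y\} = \emptyset$.

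First I would take $(x,y) \in G \cap b(\mathcal{T}) \times X$. Then $x \in b(\mathcal{T})$, so $G(x) \setminus \{x\} = \emptyset$, i.e. $G(x) \subseteq \{x\}$. Since $(x,y) \in G$ is exactly the statement $y \in G(x)$, we get $y = x$, hence $(x,y)$ belongs to the graph of the identity. For the second set I would take $(x,y) \in G \cap X \times a(\mathcal{T})$; now $y \in a(\mathcal{T})$ gives $G^{-1}(y) \setminus \{y\} = \emptyset$, i.e. $G^{-1}(y) \subseteq \{y\}$, and $(x,y) \in G$ is the same as $x \in G^{-1}(y)$, whence $x = y$. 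This already proves the lemma.

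There is no real obstacle; the only point deserving a word of care is that $x \in b(\mathcal{T})$ must be read as ``$x$ is itself a final point'' (and symmetrically that $y \in a(\mathcal{T})$ means $y$ is itself an initial point), which is precisely what the set-theoretic descriptions of $a(A)$ and $b(A)$ computed in the Proposition after Definition \ref{D:endpoint} guarantee, so it is enough to cite them rather than reprove anything. It is worth noting the intended use: together with Lemma \ref{L:mapoutside} and the extension (\ref{E:extere}), this lemma forces any transport directed by $G$ to act as the identity on $b(\mathcal{T})$ and to leave the mass sitting over $a(\mathcal{T})$ unchanged, so that one may freely reduce to the case in which $\mu$ gives no mass to final points and $\nu$ gives no mass to initial points.
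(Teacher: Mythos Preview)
Your proposal is correct and follows exactly the same approach as the paper: the paper's proof simply notes that $x \in b(\mathcal{T})$ implies $G(x)\setminus\{x\}=\emptyset$ (with the symmetric statement for $a$), which is precisely your argument.
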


\begin{proof}
From the definition of $b$ one has that
\[
x \in b(\mathcal{T}) \quad \Longrightarrow \quad G(x) \setminus \{x\} = \emptyset,
\]
A similar computation holds for $a$.
\end{proof}

Hence we conclude that
\[
\pi (b(\mathcal{T}) \times X) = \pi(G \cap b(\mathcal{T}) \times X) = \pi(\{x = y\})
\]
and following (\ref{E:extere}) we can assume that
\[
\mu(b(\mathcal{T})) = \nu(a(\mathcal{T})) = 0.
\]

\section{Partition of the transport set}
\label{S:partition}
To perform a disintegration we have to assume some regularity of the support $\Gamma$ of the transport plan $\pi \in \Pi(\mu,\nu)$. 
From now on we will assume the following: \label{P:assumpDN}

\begin{assumption}\label{A:assu1}
We say that $\Gamma$ satisfies Assumption \ref{A:assu1} if
\begin{enumerate}[label=(\alph*), ref= (\alph*)]
\item \label{A:comegeo} for all $x\in \mathcal{T}$ and for all $r>0$ the set $H(x) \cap \overline{B_{r,N}(x)}^{d_N}$ is $d$-closed;
\item \label{A:comelocpt} for all $x\in \mathcal{T}$ there exists $r>0$ such that $d_{N}(x,\cdot)_{ \llcorner  H(x) \cap \overline{B_{r}(x)}} $
is bounded.
\end{enumerate}
\end{assumption}
Note that points \ref{A:comegeo} and \ref{A:comelocpt} of Assumption \ref{A:assu1} were already introduced at page \pageref{I:comegeo}.
Let $\{x_i\}_{i \in \N}$ be a dense sequence in $(X,d)$.


\begin{lemma}
\label{L:reguloclco}
The sets
\[
W_{ijk} := \Big\{ x \in \mathcal{T} \cap \bar B_{2^{-j}}(x_i): 
d_{N}(x,\cdot)_{\llcorner H(x)\cap \bar B_{2^{-j}}(x_i) } \leq k \Big\}
\]
form a countable covering of $\mathcal{T}$ of class $\mathcal{A}$.
\end{lemma}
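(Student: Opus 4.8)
The plan is to check the two assertions of the lemma independently: first that $\{W_{ijk}\}_{i,j,k\in\enne}$ covers $\mathcal{T}$, and then that each $W_{ijk}$ lies in $\mathcal{A}$; countability is free since the indices range over $\enne$.

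For the covering I would fix $x\in\mathcal{T}$ and apply Assumption \ref{A:comelocpt}: there exist $r>0$ and $M<+\infty$ such that $d_N(x,y)\le M$ for every $y\in H(x)\cap\overline{B_r(x)}$. Choose $j\in\enne$ so large that $2^{-j+1}\le r$, and then, using the density of $\{x_i\}$ in $(X,d)$, an index $i$ with $d(x,x_i)\le 2^{-j}$, so that $x\in\bar B_{2^{-j}}(x_i)$. By the triangle inequality every $y\in\bar B_{2^{-j}}(x_i)$ satisfies $d(x,y)\le 2^{-j}+2^{-j}=2^{-j+1}\le r$, hence $\bar B_{2^{-j}}(x_i)\subset\overline{B_r(x)}$ and therefore $H(x)\cap\bar B_{2^{-j}}(x_i)\subset H(x)\cap\overline{B_r(x)}$. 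Consequently $d_N(x,\cdot)$ restricted to $H(x)\cap\bar B_{2^{-j}}(x_i)$ is bounded by $M$, and picking any integer $k\ge M$ gives $x\in W_{ijk}$.

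For the measurability I would rewrite the ``for all $y$'' condition defining $W_{ijk}$ as the negation of an existential one, namely
\[
W_{ijk} = \big(\mathcal{T}\cap\bar B_{2^{-j}}(x_i)\big)\setminus P_1(E_{ijk}),\qquad E_{ijk}:=H\cap\big(X\times\bar B_{2^{-j}}(x_i)\big)\cap\big\{(x,y):d_N(x,y)>k\big\}.
\]
Since $H$ is analytic, $\bar B_{2^{-j}}(x_i)$ is $d$-closed and $d_N$ is Borel, the set $E_{ijk}$ is analytic, hence so is its projection $P_1(E_{ijk})$; as $\mathcal{T}\cap\bar B_{2^{-j}}(x_i)$ is analytic as well, $W_{ijk}$ is the difference of two analytic sets, i.e.\ the intersection of a $\Sigma^1_1$ set with a $\Pi^1_1$ set, and therefore belongs to $\mathcal{A}$ (which contains $\Sigma^1_1\cup\Pi^1_1$ and is a $\sigma$-algebra). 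This is the same type of computation already used to show $a,b\in\mathcal{A}$.

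The step I would treat with the most care — the only genuinely delicate point — is this passage from a universal quantifier over $y\in H(x)\cap\bar B_{2^{-j}}(x_i)$ to the projection of the analytic ``bad set'' $E_{ijk}$: it relies essentially on $H$ being analytic, so that its $x$-sections, intersected with the Borel ball and the Borel superlevel set of $d_N$, yield an analytic subset of $X\times X$ whose first projection remains analytic. Note that Assumption \ref{A:comegeo} is not needed for this lemma; it is Assumption \ref{A:comelocpt} alone that makes the covering argument work.
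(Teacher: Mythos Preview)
Your proof is correct and follows essentially the same route as the paper: the paper also writes $W_{ijk}=A_{ij}\cap P_1(D_{ijk})^c$ with $A_{ij}=\mathcal{T}\cap\bar B_{2^{-j}}(x_i)$ and $D_{ijk}=\{(x,y)\in H: d(x_i,y)\le 2^{-j},\ d_N(x,y)>k\}$, which is exactly your $E_{ijk}$, and then uses Assumption~\ref{A:assu1}\ref{A:comelocpt} for the covering in the same way. Your observation that only part \ref{A:comelocpt} of Assumption~\ref{A:assu1} is used here is also correct.
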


\begin{proof}
We first prove the measurability. We consider separately the conditions defining $W_{ijk}$.

{\it Point 1.} The set
\[
A_{ij} := \mathcal{T} \cap \bar B_{2^{-j}}(x_i)
\]
is clearly analytic.

{\it Point 2.} 
The set
\[
D_{ijk}: = \bigg\{  (x,y) \in H : d(x_{i},y)\leq 2^{-j}, d_{N}(x,y) > k  \bigg\}
\]
is again analytic.
We finally can write
\begin{eqnarray*}
W_{ijk} = A_{ij}  \cap P_{1}( D_{ijk})^{c},
\end{eqnarray*}
and the fact that $\mathcal{A}$ is a $\sigma$-algebra proves that $W_{ijk} \in \mathcal{A}$.

To show that it is a covering, notice that from \ref{A:comelocpt} of Assumption \ref{A:assu1} for all $x \in \mathcal{T}$ there exists $r>0$ such that, on the set $H(x)\cap \bar B_{r}(x)$, 
$d_{N}(x,\cdot)$ is bounded. Choose $j$ and $i$ such that $2^{-j-1} \leq r$ and 
$d(x_{i},x)\leq 2^{-j-1}$, hence 
 \[
\bar B_{2^{-j}}(x_{i}) \subset \bar B_{r}(x)
\]
and therefore for some $\bar k \in \N$ we obtain that $x \in W_{ijk}$.
\end{proof}

\begin{remark}
\label{R:compFF}
Observe that $\bar B_{2^{-j}}(x_i) \cap H(x)$ is closed for all $x \in W_{ijk}$.
 
Indeed take $\{y_{n}\}_{n\in \enne} \subset \bar B_{2^{-j}}(x_i) \cap H(x)$ with $d(y_{n},y) \to 0$ as $n \to + \infty$, then since $x \in W_{ijk}$ 
it holds $d_{N}(x,y_{n}) \leq k$. By  \ref{A:comegeo} of Assumption \ref{A:assu1}, $d_{N}(x,y)\leq k$ and $y \in \bar B_{2^{-j}}(x_i) \cap H(x)$.
\end{remark}

\begin{lemma}
\label{L:partitilW}
There exist $\mu$-negligible sets $N_{ijk} \subset W_{ijk}$ such that the family of sets
\begin{eqnarray*}
\mathcal{T}_{ijk} = H^{-1}(W_{ijk} \setminus N_{ijk}) \cap\mathcal{T}
\end{eqnarray*}
is a countable covering of $\mathcal{T} \setminus \cup_{ijk} N_{ijk}$ into saturated analytic sets.
\end{lemma}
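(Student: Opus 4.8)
The plan is to observe that, in passing from $W_{ijk}$ to $W_{ijk}\setminus N_{ijk}$ to $\mathcal{T}_{ijk}$, the only obstruction to obtaining analytic sets is that the $W_{ijk}$ of Lemma~\ref{L:reguloclco} merely belong to the class $\mathcal{A}$, whereas $H^{-1}$ sends analytic sets to analytic sets: indeed $H$ is analytic, so $H^{-1}(A)=P_1\big(H\cap(X\times A)\big)$ is analytic whenever $A$ is, by stability of $\Sigma^1_1$ under intersections and projections. So first I would replace each $W_{ijk}$ by an analytic subset of full $\mu$-measure.

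Concretely, each $W_{ijk}\in\mathcal{A}$ is universally measurable, hence $\mu$-measurable, so by inner regularity of the finite Borel measure $\mu$ on the Polish space $(X,d)$ there is a $\sigma$-compact (in particular analytic) set $K_{ijk}\subset W_{ijk}$ with $\mu(W_{ijk}\setminus K_{ijk})=0$. Put $N_{ijk}:=W_{ijk}\setminus K_{ijk}$: this is a $\mu$-negligible subset of $W_{ijk}$, one has $W_{ijk}\setminus N_{ijk}=K_{ijk}$, and hence
\[
\mathcal{T}_{ijk}=H^{-1}(K_{ijk})\cap\mathcal{T}=P_1\big(H\cap(X\times K_{ijk})\big)\cap\mathcal{T}
\]
is analytic, $H$ and $\mathcal{T}$ being analytic. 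Since a countable union of null sets is null, $\mu\big(\bigcup_{ijk}N_{ijk}\big)=0$, so $\mathcal{T}\setminus\bigcup_{ijk}N_{ijk}$ has full $\mu$-measure in $\mathcal{T}$.

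It remains to check that the $\mathcal{T}_{ijk}$ cover $\mathcal{T}\setminus\bigcup_{ijk}N_{ijk}$ and are saturated. For the covering, take $x\in\mathcal{T}\setminus\bigcup_{ijk}N_{ijk}$; since the $W_{ijk}$ cover $\mathcal{T}$ by Lemma~\ref{L:reguloclco}, there is $(i,j,k)$ with $x\in W_{ijk}$, and $x\notin N_{ijk}$ forces $x\in K_{ijk}$; as $x\in\mathcal{T}$ gives $(x,x)\in H$ (reflexivity, Proposition~\ref{P:equiv}), we get $x\in H^{-1}(K_{ijk})\cap\mathcal{T}=\mathcal{T}_{ijk}$, the reverse inclusion being trivial. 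For saturation with respect to the equivalence relation $H\cap\mathcal{T}\times\mathcal{T}$ of Proposition~\ref{P:equiv}: if $x\in\mathcal{T}_{ijk}$ and $(x,y)\in H$ with $y\in\mathcal{T}$, then $H(x)\cap\mathcal{T}=H(y)\cap\mathcal{T}$, and since $K_{ijk}\subset W_{ijk}\subset\mathcal{T}$ this yields $H(x)\cap K_{ijk}=H(y)\cap K_{ijk}$, so $H(y)\cap K_{ijk}\neq\emptyset$ and $y\in\mathcal{T}_{ijk}$.

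I do not expect any serious difficulty: the argument is a routine measure-theoretic approximation. The only points to keep straight are that the Borel/$\sigma$-compact approximation must be performed \emph{inside} each $W_{ijk}$, so that after discarding the $\mu$-null remainder one is left with an analytic set to which $H^{-1}$ may be applied, and that saturation is automatic precisely because $K_{ijk}\subset\mathcal{T}$ and $H$ restricts to an equivalence relation on $\mathcal{T}$.
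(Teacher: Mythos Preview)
Your proof is correct and follows essentially the same approach as the paper's: both replace each $W_{ijk}\in\mathcal{A}$ by a subset of full $\mu$-measure that is nice enough (the paper takes it Borel, you take it $\sigma$-compact via inner regularity) so that applying $H^{-1}$ and intersecting with $\mathcal{T}$ yields analytic sets, and then the covering and saturation follow from Lemma~\ref{L:reguloclco} and the fact that $H$ is an equivalence relation on $\mathcal{T}$. Your write-up is simply more explicit about the covering and saturation steps, which the paper dismisses with ``it follows immediately''.
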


\begin{proof}
First of all, since $W_{ijk} \in \mathcal{A}$, then there exists $\mu$-negligible set $N_{ijk} \subset W_{ijk}$ such that $W_{ijk} \setminus N_{ijk} \in \mathcal{B}(X)$. Hence $\{W_{ijk} \setminus N_{ijk}\}_{i,j,k \in \N}$ is a countable covering of $\mathcal{T} \setminus \cup_{ijk} N_{ijk}$. It follows immediately that $\{\mathcal{T}_{ijk}\}_{i,j,k \in \N}$ satisfies the lemma.
\end{proof}

From any analytic countable covering, we can find a countable partition into $\mathcal{A}$-class saturated sets by defining
\begin{equation}
\label{E:Zkije}
\mathcal{Z}_{m} := \mathcal{T}_{i_mj_mk_m} \setminus \bigcup_{m' = 1}^{m-1} \mathcal{T}_{i_{m'}j_{m'}k_{m'}}, 
\end{equation}
where
\[
\N \ni m \mapsto (i_m,j_m,k_m) \in \N^3
\]
is a bijective map. 
Since $H$ is an equivalence relation on $\mathcal{T}$, we use this partition to prove the strong consistency.

On $\mathcal{Z}_m$, $m > 0$, we define the closed valued map
\begin{equation}
\label{E:mapTijkF}
\mathcal{Z}_m \ni x \mapsto F(x) := H(x) \cap \bar B_{2^{-j_m}}(x_{i_m}).
\end{equation}

\begin{proposition}
\label{P:sicogrF}
There exists a $\mu$-measurable cross section $f : \mathcal{T} \to \mathcal{T}$ for the equivalence relation $H$.
\end{proposition}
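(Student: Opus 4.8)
The plan is to construct $f$ on each piece of the partition $\{\mathcal{Z}_m\}_{m\in\N}$ of $(\ref{E:Zkije})$ and then glue. Recall that each $\mathcal{T}_{ijk}$ of Lemma \ref{L:partitilW}, hence each $\mathcal{Z}_m$, is saturated for the equivalence relation $H$ (Proposition \ref{P:equiv}); moreover $\bigcup_m \mathcal{Z}_m = \bigcup_{ijk}\mathcal{T}_{ijk}$ is a $\mu$-conull saturated subset of $\mathcal T$, its complement being contained in the $\mu$-negligible set $\bigcup_{ijk}N_{ijk}$. Since each $\mathcal{Z}_m$ is saturated, an $H$-class meeting $\bigcup_m\mathcal{Z}_m$ is entirely contained in exactly one $\mathcal{Z}_m$; therefore it suffices to produce, for each $m$, an $\mathcal{A}$-measurable section $f_m:\mathcal{Z}_m\to X$ with $(x,f_m(x))\in H$ which is constant on the $H$-classes contained in $\mathcal{Z}_m$. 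Setting $f:=f_m$ on $\mathcal{Z}_m$ and $f:=\id$ on the negligible remainder then gives a globally $\mathcal{A}$-measurable — hence $\mu$-measurable, every $\mathcal A$-set being universally measurable — cross-section of $H$.

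To build $f_m$ I would apply Corollary \ref{C:weelsupprr} to the map $F$ of $(\ref{E:mapTijkF})$ on $\mathcal{Z}_m$, namely $F(x)=H(x)\cap\bar B_{2^{-j_m}}(x_{i_m})$. Three facts are needed. First, $\gr(F)=H\cap\big(\mathcal{Z}_m\times\bar B_{2^{-j_m}}(x_{i_m})\big)\in\mathcal A$, because $H$ is analytic and $\mathcal{Z}_m\in\mathcal A$. Second, for every $x\in\mathcal{Z}_m$ the section $F(x)$ is non-empty and $d$-closed: since $\mathcal{Z}_m\subset H^{-1}(W_{i_mj_mk_m}\setminus N_{i_mj_mk_m})\cap\mathcal T$, there is $y\in W_{i_mj_mk_m}\setminus N_{i_mj_mk_m}$ with $y\in H(x)$, and as $x,y\in\mathcal T$ with $H$ an equivalence relation, $H(x)=H(y)$; hence
\[
F(x)=H(x)\cap\bar B_{2^{-j_m}}(x_{i_m})=H(y)\cap\bar B_{2^{-j_m}}(x_{i_m}),
\]
which contains $y$ and is $d$-closed by Remark \ref{R:compFF} applied at $y\in W_{i_mj_mk_m}$. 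Third, because $\mathcal{Z}_m$ is saturated, $x\sim_H x'$ in $\mathcal{Z}_m$ forces $H(x)=H(x')$ and thus $F(x)=F(x')$, so the equivalence relation ``$F(x)=F(x')$'' of Corollary \ref{C:weelsupprr} is coarser than $H\llcorner_{\mathcal{Z}_m\times\mathcal{Z}_m}$; consequently the section it produces, being constant on $F$-classes, is constant on the $H$-classes inside $\mathcal{Z}_m$. This gives the desired $f_m$, with $f_m(x)\in F(x)\subset H(x)$.

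I expect the crux to be the second point: upgrading the purely pointwise information of Assumption \ref{A:assu1} to honest $d$-closedness of the sets $F(x)$ over the whole saturated piece $\mathcal{Z}_m$. The subtlety is that the ball $\bar B_{2^{-j_m}}(x_{i_m})$ on which $d_N$ is uniformly bounded is pinned to a representative $y\in W_{i_mj_mk_m}$ of the class of $x$, not to $x$ itself; this is why one must pass through the identity $F(x)=H(y)\cap\bar B_{2^{-j_m}}(x_{i_m})$, after which the closedness is precisely Remark \ref{R:compFF}, which in turn combines Assumption \ref{A:assu1}\ref{A:comegeo} with the defining bound $d_N(y,\cdot)_{\llcorner H(y)\cap\bar B_{2^{-j_m}}(x_{i_m})}\le k_m$. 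Everything else — measurability of the graphs, the countable relabelling $m\mapsto(i_m,j_m,k_m)$, and the gluing — is routine manipulation within the class $\mathcal A$ set up in Sections \ref{Ss:univmeas} and \ref{S:partition}.
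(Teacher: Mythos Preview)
Your proposal is correct and follows the same route as the paper: apply Corollary \ref{C:weelsupprr} to the closed-valued multifunction $F$ of (\ref{E:mapTijkF}) on each saturated piece $\mathcal{Z}_m$, then glue and define $f$ arbitrarily on the $\mu$-negligible remainder. Your treatment of the closedness of $F(x)$ is in fact more careful than the paper's, which simply invokes Remark \ref{R:compFF} without spelling out the passage from $x\in\mathcal{Z}_m$ to a representative $y\in W_{i_mj_mk_m}$ with $H(x)=H(y)$; one small point is that Corollary \ref{C:weelsupprr} is applied via $\mathcal{A}$-measurability of the multifunction (i.e.\ $F^{-1}(G)\in\mathcal{A}$ for open $G$), which the paper verifies directly, whereas you record only $\gr(F)\in\mathcal{A}$---but your formula $\gr(F)=H\cap(\mathcal{Z}_m\times\bar B_{2^{-j_m}}(x_{i_m}))$ immediately yields $F^{-1}(G)=\mathcal{Z}_m\cap P_1\big(H\cap X\times(\bar B_{2^{-j_m}}(x_{i_m})\cap G)\big)\in\mathcal{A}$, so no gap arises.
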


\begin{proof}
First we show that $F$ is $\mathcal{A}$-measurable: for $\delta > 0$,
\begin{eqnarray*}
F^{-1}(B_\delta(y)) =&~ \Big\{ x \in \mathcal{Z}_m: H(x) \cap B_{\delta}(y) \cap \bar B_{2^{-{j_m}}}(x_{i_m}) \not= \emptyset \Big\} \crcr
=&~ \mathcal{Z}_m \cap P_1 \Big( H \cap \big( X \times B_{\delta}(y) \cap \bar B_{2^{-{j_m}}}(x_{i_m}) \big) \Big).
\end{eqnarray*}
Being the intersection of two $\mathcal{A}$-class sets, $F^{-1}(B_\delta(y))$ is in $\mathcal{A}$. 
In Remark \ref{R:compFF} we have observed that $F$ is a closed-valued map, hence,
from Lemma 5.1.4 of \cite{Sri:courseborel}, $\gr(F)$ is $\mathcal{A}$-measurable.

By Corollary \ref{C:weelsupprr} there exists a $\mathcal{A}$-class section $f_m : \mathcal Z_m \to \bar B_{2^{-{j_m}}}(x_{i_m})$. The proposition follows by setting $f \llcorner_{\mathcal Z_m} = f_m$ on $\cup_m \mathcal Z_m$, and defining it arbitrarily on $\mathcal T \setminus \cup_m \mathcal Z_m$: the latter being
$\mu$-negligible, $f$ is $\mu$-measurable.
\end{proof}

Up to a $\mu$-negligible saturated set $\mathcal{T}_N$, we can assume it to have $\sigma$-compact range: just let $S \subset f(\mathcal{T})$ be a $\sigma$-compact set where $f_\sharp \mu$ is concentrated, and set
\begin{equation}
\label{E:TNngel}
\mathcal{T}_S := H^{-1}(S) \cap \mathcal{T}, \quad \mathcal{T}_N := \mathcal{T} \setminus \mathcal{T}_S, \quad \mu(\mathcal{T}_N) = 0.
\end{equation}

Hence we have a measurable cross-section
\[
\mathcal{S} := S \cup f(\mathcal{T}_N) = (\textrm{Borel}) \cup (f(\textrm{$\mu$-negligible})).
\]
Hence Disintegration Theorem \ref{T:disintr} yields 
\begin{equation}
\label{E:disiparzmu}
\mu\llcorner_{\mathcal{T}} = \int_S \mu_y m(dy), \quad m = f_\sharp \mu\llcorner_{\mathcal{T}}, \ \mu_y \in \mathcal{P}(H(y))
\end{equation}
and the disintegration is strongly consistent since the quotient map $f : \mathcal T \to \mathcal T$ is $\mu$-measurable and $(\mathcal T,\mathcal B(\mathcal T))$ is countably generated.

Observe that $H$ induces an equivalence relation also on $\mathcal{T} \times X \cap \Gamma$ where the equivalence classes are $H(y)\cap \mathcal{T} \times X$ and
the quotient map is the $f$ of Proposition \ref{P:sicogrF}. Hence  
\begin{equation}
\label{E:disiparzpi}
\pi\llcorner_{\mathcal{T} \times X \cap \Gamma} = 
\int_S \pi_y m_{\pi}(dy), \quad m_{\pi} = f_\sharp \pi\llcorner_{\mathcal{T} \times X \cap \Gamma}, \ \pi_y \in \mathcal{P}(H(y)\cap \mathcal{T} \times X).
\end{equation}
Observe that $m = m_{\pi}$.

\section{Regularity of the disintegration}
\label{S:pt iniziali e finali}

In this Section we consider the translation of Borel sets by the optimal geodesic flow, we introduce the fundamental regularity assumption (Assumption \ref{A:NDEatom}) on the measure $\mu$ and we show that an immediate consequence is that the set of initial points is negligible and consequently we obtain a disintegration of $\mu$
on the whole space. A second consequence is that the disintegration of $\mu$ w.r.t. the $H$ has continuous conditions probabilities.

\subsection{Evolution of Borel sets}
\label{Ss:evolution}

Let $A \subset \mathcal{T}_e$ be an analytic set and define for $t \in \R$ the \emph{$t$-evolution $A_t$ of $A$} by: 
\begin{equation}
\label{E:At}
A_t :=
\begin{cases}
P_{2}\big\{ (x,y)\in G \cap A \times X : d_{N}(x,y)=t \big\} & t\geq 0 \\
 P_{2}\big\{ (x,y)\in G^{-1} \cap A \times X : d_{N}(x,y)=t \big\} & t<0.
\end{cases}
\end{equation}

It is clear from the definition that if $A$ is analytic, also $A_t $ is analytic .
We can show that $t \mapsto \mu(A_t)$ is measurable.

\begin{lemma}
\label{L:measumuAt}
Let $A$ be  analytic. The function $t \mapsto \mu(A_t)$ is $\mathcal{A}$-measurable for $t \in \erre$. 
\end{lemma}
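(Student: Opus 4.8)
The plan is to exhibit $t \mapsto \mu(A_t)$ as a composition of maps that manifestly preserve measurability in the $\mathcal{A}$-class, reducing matters to the fact that $\mu$, as a Borel measure, is $\mathcal{A}$-measurable (indeed every $\mathcal{A}$-set is universally measurable, so $\mu$ can be evaluated on it). Concretely, I would first introduce the set
\[
E := \big\{ (t,x,y) : (x,y) \in G, \ d_N(x,y) = t \big\} \cup \big\{ (t,x,y): (x,y) \in G^{-1}, \ d_N(x,y) = t \big\},
\]
viewed inside $\erre \times X \times X$. Since $G$ and $G^{-1}$ are analytic (Lemma \ref{L:analGR}) and $d_N$ is Borel on $X \times X$, the set $E$ is analytic. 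Intersecting with $\erre \times A \times X$ (with $A$ analytic) and projecting onto the last coordinate in the fibre over a fixed $t$, we recover exactly $A_t = P_3\big( E \cap (\{t\} \times A \times X)\big)$, up to the sign bookkeeping built into \eqref{E:At}.

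Next I would set $g(t) := \mu(A_t)$ and show $\{ t : g(t) > \lambda \} \in \mathcal{A}$ for every $\lambda \in \erre$. The natural tool is the measurable-projection / measurable-selection machinery: the map $t \mapsto A_t$ is an analytic-valued multifunction in the sense that its graph $\{(t,y) : y \in A_t\} = P_{134}\big( E \cap (\erre \times A \times X) \big)$ is analytic in $\erre \times X$. For such a multifunction one applies the fact (a standard consequence of von Neumann's selection theorem, Theorem \ref{T:vanneuma}, combined with the theory of analytic sets in Chapter 4 of \cite{Sri:courseborel}) that for a finite Borel measure $\mu$ the function $t \mapsto \mu\big(\{y : (t,y) \in B\}\big)$ is $\mathcal{A}$-measurable whenever $B$ is analytic. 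I would either cite this directly or derive it: approximate $B$ from inside by $\sigma$-compact sets where $\mu$ (in its first-marginal-free slice form) is concentrated, use that for compact-valued multifunctions the fibre measure is upper semicontinuous in $t$, and pass to the monotone limit, using closure of $\mathcal{A}$ under countable unions and intersections.

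The main obstacle I anticipate is precisely this measurability-of-the-fibre-measure step: $t \mapsto \mu(A_t)$ need not be Borel because $A_t$ is only analytic, and one has to be careful that the "slicing" of an analytic set is handled correctly — a fixed fibre $\{t\} \times X \times X$ of an analytic set is analytic, but one needs joint control in $t$ to get measurability of the function, not just measurability of each value. The clean way around this is to never slice: keep the full analytic set $E \cap (\erre \times A \times X)$ and invoke the general principle that $\int \ind_{E'}(t,y)\, \mu(dy)$ is $\mathcal{A}$-measurable in $t$ for $E'$ analytic, which is itself proved by the capacitability / $\sigma$-compact exhaustion argument above. A secondary, purely bookkeeping nuisance is that the definition \eqref{E:At} switches between $G$ and $G^{-1}$ at $t = 0$; I would simply treat $t \geq 0$ and $t < 0$ as two analytic pieces and note $\mathcal{A}$ is closed under the relevant Boolean operations, so the restriction of $g$ to each half-line is $\mathcal{A}$-measurable and hence so is $g$ on all of $\erre$.
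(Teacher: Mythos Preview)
Your reduction to the analytic graph $\hat A=\{(t,y):y\in A_t\}\subset\erre\times X$ is exactly the paper's Step~1, and you correctly isolate the crux: $\mathcal A$-measurability in $t$ of the $\mu$-measure of the slices of an analytic set. Citing this as a standard fact is legitimate; the gap is in your proposed derivation.

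The $\sigma$-compact exhaustion you sketch does not go through. Analytic sets are not $\sigma$-compact in general, and inner regularity only yields, for each \emph{fixed} $t$, compacts $K\subset \hat A_t$ with $\mu(K)$ close to $\mu(\hat A_t)$; there is no measure on the $t$-axis against which to approximate $\hat A$ globally, so you cannot produce a single $\sigma$-compact $K\subset\hat A$ with $\mu(K_t)=\mu(\hat A_t)$ for all $t$. The upper-semicontinuity of $t\mapsto\mu(K_t)$ for compact $K$ therefore never gets you to the analytic set, and the monotone-limit step has nothing to converge to.

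The paper instead gives a self-contained proof of precisely the slice-measure lemma you want to invoke, and it is worth knowing. Write $\hat A=P_{12}(B)$ for a Borel $B\subset X\times\erre\times[0,1]$, and set $\Pi(\mu):=\{\pi\in\mathcal P(X\times[0,1]):(P_1)_\sharp\pi=\mu\}$. Then
\[
\mu(A_t)=\mu(\hat A(t))=\sup_{\pi\in\Pi(\mu)}\pi(B(t)):
\]
the inequality $\geq$ is immediate from $B(t)\subset\hat A(t)\times[0,1]$, and equality is achieved by pushing $\mu$ along a measurable section of $B(t)$ furnished by von Neumann's theorem. Since $B$ is Borel, $(t,\pi)\mapsto\pi(B(t))$ is Borel, and the supremum over the Polish space $\Pi(\mu)$ of a jointly Borel family has analytic super-level sets --- hence $t\mapsto\mu(A_t)$ is $\mathcal A$-measurable. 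This ``lift to Borel, then sup over couplings'' device is what replaces (and in fact proves) the capacitability-type statement you were reaching for.
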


\begin{proof}
We divide the proof in three steps.

{\it Step 1.}
Define the subset of $X\times  \erre$
\[
\hat A:= \big\{ (x,t) : x \in A_{t} \big\}. 
\]
Note that 
\begin{eqnarray*}
\hat A & = &~ P_{13} \bigg\{ (x,y,t) \in X \times X \times \erre^{+} : (x,y) \in G \cap A \times X, d_{N}(x,y)=t \bigg\}  \crcr
&&~ \qquad  \cup P_{13} \bigg\{ (x,y,t) \in X \times X \times \erre^{-} : (x,y) \in G^{-1} \cap A \times X, d_{N}(x,y)= -t \bigg\},
\end{eqnarray*}
hence it is analytic. Clearly $A_{t} = \hat A (t)$. 

{\it Step 2.} Define the closed set in $\mathcal{P}(X \times [0,1])$
$$
\Pi(\mu) : = \big\{ \pi \in \mathcal{P}(X\times [0,1]) : (P_{1})_{\sharp}(\pi)= \mu \big\}
$$
and let $B \subset X\times \R \times [0,1]$ be a Borel set such that $P_{12}(B)= \hat A$.

Consider the function 
\[ 
\R \times \Pi(\mu) \ni (t, \pi) \mapsto \pi(B(t)).
\]
A slight modification of Lemma 4.12 in \cite{biacar:cmono} shows that this function is Borel.

{\it Step 3.}
Since supremum of Borel function are $\mathcal{A}$-measurable, pag. 134 of \cite{Sri:courseborel},  
the proof is concluded once we show that 
\[
\mu(A_{t}) = \mu (\hat A(t)) = \sup_{ \pi \in \Pi(\mu) } \pi(B(t)). 
\]

Since $\hat A(t) \times [0,1] \supset B(t)$
\[ 
\mu (\hat A(t)) =   \pi(\hat A(t) \times [0,1]) \geq \pi (B(t)).
\]
On the other hand from Theorem \ref{T:vanneuma}, there exists an $\mathcal{A}$-measurable section of the analytic set $B(t)$, 
so we have $u: \hat A(t) \to B(t)$.
Clearly for $\pi_{u} = (\Id,u)_{\sharp}(\mu)$ it holds $\pi_{u}(B(t))= \mu (\hat A(t))$.

\end{proof}

The next assumption is the fundamental assumption of the paper.

\begin{assumption}[Non-degeneracy assumption]
\label{A:NDEatom} The measure $\mu$ satisfies Assumption \ref{A:NDEatom}
if for each analytic set $A\subset \mathcal{T}_{e}$ there exists a sequence $\{ t_{n} \}_{n\in \enne} \subset \erre$ and a strictly positive constant $C$ such that $t_{n} \to 0$ as $n\to +\infty$ and $\mu(A_{t_{n}}) \geq C \mu (A)$ for every $n \in \enne$.
\end{assumption}
Note that Assumption \ref{A:NDEatom} was already introduced at page \pageref{I:NDEatom}.
Clearly it is enough to verify Assumption \ref{A:NDEatom} for $A$ compact set. 
An immediate consequence of the Assumption \ref{A:NDEatom} is that the measure $\mu$ is concentrated on $\mathcal{T}$.

\begin{proposition}
\label{P:puntini}
If $\mu$ satisfies Assumption \ref{A:NDEatom} then
\[
\mu(\mathcal{T}_e\setminus \mathcal{T}) = 0.
\]
\end{proposition}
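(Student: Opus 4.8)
The plan is to show that $\mathcal{T}_e \setminus \mathcal{T}$ is contained, modulo a $\mu$-negligible set, in $a(\mathcal T)\cup b(\mathcal T)$, and then to kill each of the two pieces separately. Recall that in Section \ref{S:Optimal} we already arranged $\mu(b(\mathcal{T})) = 0$ via the extension \eqref{E:extere} and Lemma \ref{L:finini0}. So the heart of the matter is the set of initial points $a(\mathcal{T}) = a(\mathcal{T}_e)$: I want to prove $\mu(a(\mathcal{T})) = 0$, and this is exactly where Assumption \ref{A:NDEatom} enters. The key geometric observation is that if $x$ is an initial point, then it has no incoming transport ray, so $x \notin A_t$ for any $t < 0$ whenever $A \subset a(\mathcal{T})$; more importantly, translating $a(\mathcal{T})$ forward by a positive amount $t$ moves it into $\mathcal{T} \setminus a(\mathcal{T})$, i.e. the translates $(a(\mathcal{T}))_t$ for $t>0$ are \emph{disjoint} from each other for distinct values of $t$, since a point interior to a geodesic lies at a well-defined positive distance from the initial point of that geodesic.

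More precisely, the first step is to fix a compact set $A \subset a(\mathcal{T})$ (it suffices to treat compact sets, by inner regularity of $\mu$). The second step is to verify that the forward translates $\{A_t\}_{t>0}$ are pairwise disjoint: if $z \in A_s \cap A_t$ with $0 < s < t$, then there are initial points $x, x' \in A$ with $z$ on the outgoing geodesic from $x$ at distance $s$ and on the outgoing geodesic from $x'$ at distance $t$; using that $G$ is a partial order and the $d_N$-cyclical monotonicity of $G$ (Lemma \ref{L:analGR}), one deduces $x' \in G^{-1}(z)$ lies on the geodesic from $x$ to $z$, hence $x'$ has a nontrivial incoming ray unless $x' = x$, and then $s = t$, a contradiction. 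The third step applies Assumption \ref{A:NDEatom} to $A$: there is an infinitesimal sequence $t_n \to 0$ with $\mu(A_{t_n}) \geq C\mu(A)$. Passing to a subsequence we may assume the $t_n$ are distinct and, by the disjointness just established, the sets $A_{t_n}$ are pairwise disjoint, so
\[
1 \geq \mu\Big( \bigcup_n A_{t_n} \Big) = \sum_n \mu(A_{t_n}) \geq \sum_n C \mu(A),
\]
which forces $\mu(A) = 0$. Since this holds for every compact $A \subset a(\mathcal{T})$ and $\mu$ is inner regular, $\mu(a(\mathcal{T})) = 0$.

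Finally I would assemble the pieces: $\mathcal{T}_e \setminus \mathcal{T} = a(\mathcal{T}) \cup b(\mathcal{T})$ (fourth point of the Proposition following Definition \ref{D:endpoint}), and both sets are now $\mu$-negligible — $b(\mathcal{T})$ by the normalization in Section \ref{S:Optimal}, $a(\mathcal{T})$ by the argument above — so $\mu(\mathcal{T}_e \setminus \mathcal{T}) = 0$. I expect the main obstacle to be the rigorous proof of the disjointness of the forward translates $\{A_t\}_{t>0}$: one has to be careful that branching of geodesics does not allow the same point $z$ to be reached from distinct initial points at distinct distances along geodesics both of which have endpoints in $\Gamma'$. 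This is precisely where one must exploit that $G$ is a partial order relation together with its $d_N$-cyclical monotonicity, rather than any nonbranching hypothesis, and it is the only place where the measurability of $t \mapsto \mu(A_t)$ (Lemma \ref{L:measumuAt}) together with Assumption \ref{A:NDEatom} is genuinely used.
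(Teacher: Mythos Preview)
Your argument hinges on the pairwise disjointness of the forward translates $\{A_t\}_{t>0}$ for $A\subset a(\mathcal T)$, and this fails in branching spaces; neither the partial-order property of $G$ nor $d_N$-cyclical monotonicity rescues it. Take a ``Y''-shaped metric tree: two arms of lengths $1$ and $2$ with tips $x,x'$ meeting at a junction, followed by a long stem, and set $\Gamma=\{(x,p),(x',p)\}$ for some $p$ on the stem. Then $x,x'\in a(\mathcal T)$, and every $z$ on the stem lies in $A_{d_N(x,z)}\cap A_{d_N(x',z)}$ with $d_N(x',z)=d_N(x,z)+1$. Your step ``$x'$ lies on the geodesic from $x$ to $z$'' is simply false here---$x'$ sits on the other arm---and the $2$-cycle inequality for $(x,z),(x',z)\in G$ holds with equality, so monotonicity yields nothing. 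Consequently $\sum_n\mu(A_{t_n})$ cannot be bounded by $\mu\big(\bigcup_n A_{t_n}\big)$ and the contradiction evaporates. You correctly flagged this as the main obstacle, but the resolution you sketch does not work.

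The paper avoids this trap by never comparing two translates with each other. It uses instead that for $t\neq 0$ the translate $\hat A_{t}$ consists of points carrying a nontrivial incoming (or outgoing) ray, hence $\hat A_t\cap A=\emptyset$ where $A=\mathcal T_e\setminus\mathcal T$ (modulo the already-arranged $\mu(b(\mathcal T))=0$). Both $\hat A$ and $\hat A_{t_n}$ then sit inside the $d_N$-neighbourhood $\hat A^{\varepsilon}:=\{x:d_N(\hat A,x)<\varepsilon\}$ whenever $\varepsilon\ge|t_n|$, and letting $\varepsilon\to 0$ gives
\[
\mu(\hat A)=\lim_{\varepsilon\to 0}\mu(\hat A^{\varepsilon})\ge\mu(\hat A)+\mu(\hat A_{t_n})\ge(1+C)\,\mu(\hat A).
\]
Only disjointness of a \emph{single} translate from the endpoint set is needed, and that survives branching.
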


\begin{proof}
Let $A= \mathcal{T}_{e}\setminus \mathcal{T}$. Suppose by contradiction $\mu(A)>0$.
By the inner regularity there exists $\hat A \subset A$ closed with $\mu (\hat A)>0$. By Assumption \ref{A:NDEatom} there exist $C > 0$ and 
$\{ t_{n} \}_{n\in \enne}$ converging to 0 such that $\mu(\hat A_{t_{n}}) \geq C \mu(\hat A)$. 
 
Define $ \hat A^{\ve}:= \big\{ x : d_N(\hat A,x) < \ve \big\}$. 
Since $\hat A \subset A$, for all $n \in \enne$ it holds $\hat A_{t_{n}} \cap A = \emptyset$. 
Moreover for $t_{n}\leq\ve$ we have $\hat A^{\ve} \supset \hat A_{t_{n}}$. So we have
\[ 
\mu(\hat A) = \lim_{\ve \to 0} \mu (\hat A^{\ve}) \geq \mu(\hat A) + \mu(\hat A_{t_{n}}) \geq (1+ C) \mu (\hat A),
\]
that gives the contradiction.
\end{proof}

Once we know that $\mu(\mathcal{T}) = 1$, we can use the Disintegration Theorem \ref{T:disintr} to write
\begin{equation}
\label{E:disintT}
\mu = \int_S \mu_y m(dy), \quad m = f_\sharp \mu, \ \mu_y \in \mathcal{P}(H(y)).
\end{equation}
The disintegration is strongly consistent since the quotient map $f : \mathcal T \to \mathcal T$ is $\mu$-measurable and $(\mathcal T,\mathcal B(\mathcal T))$ is countably generated.

The second consequence of Assumption \ref{A:NDEatom} is that $\mu_y$ is continuous, i.e. $\mu_y(\{x\}) = 0$ for all $x \in X$.

\begin{proposition}
\label{P:nonatoms}
If $\mu$ satisfies Assumption \ref{A:NDEatom} then the conditional probabilities $\mu_y$ are continuous for $m$-a.e. $y \in S$.
\end{proposition}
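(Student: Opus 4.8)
The plan is to argue by contradiction at the level of the quotient: suppose the set of $y$ for which $\mu_y$ has an atom is not $m$-negligible. Using the measurable selection machinery, I would produce a single analytic set $A \subset \mathcal{T}$ which meets each such fiber $H(y)$ in exactly one point (the atom), and which has strictly positive $\mu$-measure; then I would derive a contradiction with Assumption \ref{A:NDEatom} by showing that the translates $A_t$ are pairwise disjoint for distinct $t$ near $0$, so that $\sum_n \mu(A_{t_n}) \le 1$ forces $\mu(A_{t_n}) \to 0$, contradicting $\mu(A_{t_n}) \ge C\mu(A) > 0$.

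In more detail, first I would make precise what "$\mu_y$ has an atom" means measurably. Consider the map $y \mapsto \sup_{x} \mu_y(\{x\})$; using that $x \mapsto \mu_y(\{x\})$ and the disintegration are jointly measurable (via Theorem \ref{T:disintr} and the fact that $(\mathcal{T},\mathcal{B}(\mathcal{T}))$ is countably generated), the set $S' := \{y : \exists x,\ \mu_y(\{x\}) \ge \alpha\}$ is, for each rational $\alpha > 0$, in $\mathcal{A}$; if the conclusion fails, some such $S'$ has $m(S') > 0$. By a selection theorem (Theorem \ref{T:vanneuma} applied to the analytic set $\{(y,x) : \mu_y(\{x\}) \ge \alpha\}$, whose projection is $S'$), there is an $\mathcal{A}$-measurable map $y \mapsto x(y) \in H(y)$ with $\mu_y(\{x(y)\}) \ge \alpha$. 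Let $A := \{x(y) : y \in S'\}$, the image of this section; it is analytic (or at least $\mathcal{A}$-measurable), it is contained in $\mathcal{T}$, it intersects each fiber $H(y)$, $y \in S'$, in a single point, and by the disintegration formula
\[
\mu(A) = \int_S \mu_y(A)\, m(dy) \ge \int_{S'} \mu_y(\{x(y)\})\, m(dy) \ge \alpha\, m(S') > 0.
\]

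The key geometric step is then to show that $A_t \cap A_{t'} = \emptyset$ whenever $t \neq t'$ are small. Because $A$ meets each chain $H(y)$ in only one point, and the evolution $A_t$ moves that point a $d_N$-distance exactly $|t|$ along an oriented transport ray through it (in the direction fixed by $G$), the images for different parameter values land at different points of the same chain — here one uses that the transport ray through $x(y)$ is an honest geodesic so that the arrival point is uniquely determined by $t$, together with $G$ being a partial order on $\mathcal{T}_e$ (Proposition \ref{P:equiv}) to rule out a backtrack. Hence $t \mapsto A_t$ is injective on fibers and the sets $\{A_{t_n}\}_{n}$ given by Assumption \ref{A:NDEatom} are pairwise disjoint, so
\[
1 \ge \mu\Big( \bigcup_n A_{t_n} \Big) = \sum_n \mu(A_{t_n}) \ge \sum_n C\,\mu(A) = +\infty,
\]
the desired contradiction. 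Finally one lets $\alpha$ range over a sequence decreasing to $0$ to conclude that for $m$-a.e. $y$ the measure $\mu_y$ has no atom at all.

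The main obstacle I anticipate is the disjointness of the translates $A_t$: one must be careful that distinct values of $t$ genuinely produce distinct points, which can fail if two different branches of geodesics leaving $x(y)$ were allowed, or if the chain structure let the flow return to an earlier point. This is exactly where the partial order property of $G$ on $\mathcal{T}_e$ and the fact that $A$ picks out a \emph{single} point per equivalence class are used, and where some care with the measurability of the whole construction (staying inside the $\mathcal{A}$-class so that Lemma \ref{L:measumuAt} and Assumption \ref{A:NDEatom} apply) is needed.
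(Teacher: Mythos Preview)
Your proposal is correct and follows essentially the same approach as the paper: construct a section $A$ of $H$ consisting of one atom per bad fiber, show $\mu(A)>0$, and derive a contradiction from the pairwise disjointness of the translates $A_{t_n}$; the paper obtains the section by first restricting to a compact $K\subset S$ on which $y\mapsto\mu_y$ is weakly continuous (so that the atom set is Borel) and then applying Lusin's decomposition into Borel graphs, whereas you use von Neumann selection on $\{(y,x):\mu_y(\{x\})\ge\alpha\}$, but the underlying logic is identical. One minor sharpening of your disjointness step: the concern is not that branching produces multiple arrival points (the set $A_t$ is allowed to be multi-valued over each fiber), but rather that if $z\in A_{t_n}\cap A_{t_m}\cap\mathcal T$ then both preimages in $A$ lie in $H(z)$ and hence coincide because $A$ is a section, which forces $t_n=d_N(x(y),z)=t_m$ when the signs agree and, via the anti-symmetry of the partial order $G$, forces $z=x(y)$ when they differ.
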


\begin{proof}
From the regularity of the disintegration and the fact that $m(S) = 1$, we can assume that the map $y \mapsto \mu_y$ is weakly continuous on a compact set $K \subset S$ of comeasure $<\ve$. 
It is enough to prove the proposition on $K$.

{\it Step 1.} From the continuity of $K \ni y \mapsto \mu_y \in \mathcal{P}(X)$ w.r.t. the weak topology, it follows that the map
\[
y \mapsto A(y) := \big\{ x \in H(y): \mu_y(\{x\}) > 0 \big\} = \cup_n \big\{ x \in H(y): \mu_y(\{x\}) \geq 2^{-n} \big\}
\]
is $\sigma$-closed: in fact, if $(y_m,x_m) \to (y,x)$ and $\mu_{y_m}(\{x_m\}) \geq 2^{-n}$, then $\mu_y(\{x\}) \geq 2^{-n}$ by u.s.c. on compact sets.
Hence $A$ is Borel, where $A = \{ (y,A(y)) : y\in K \}$. 

{\it Step 2.} The claim is equivalent to $\mu(P_{2}(A))=0$. Suppose by contradiction $\mu(P_{2}(A))>0$.
By Lusin Theorem (Theorem 5.8.11 of \cite{Sri:courseborel}) 
$A$ is the countable union of Borel graphs, $A = \cup_{n} A_{n}$.
Therefore we can take a Borel selection of $A$ just considering one of the Borel graphs, say $\hat A$.
Since at least one of $P_{2}(A_{n})$ must have positive $\mu$-measure, we can assume $\mu (P_{2}(\hat A))>0$.
 

By Assumption \ref{A:NDEatom} 
$\mu( (P_{2}(\hat A))_{t_{n}} ) \geq C \mu(P_{2}(\hat A))$ for some $C>0$ and $t_{n}\to 0$.
Since $\hat A$ is a Borel graph, for every $y \in P_{1}(\hat A)$ the set $P_{2}( \{y\} \times X \cap \hat A)$ is a singleton.
Hence $(P_{2}(\hat A))_{t_{n}} \cap (P_{2}(\hat A))_{t_{m}} = \emptyset$. We have a contradiction with the fact that the measure is finite.
\end{proof}

\section{Solution to the Monge problem}
\label{S:Solution}
Throughout the section we assume $\mu$ to satisfy Assumption \ref{A:NDEatom}. 
It follows from Disintegration Theorem \ref{T:disintr}, Proposition \ref{P:puntini} and Proposition \ref{P:nonatoms} that
\[
\mu = \int \mu_y m(dy), \ \pi = \int \pi_y m(dy), \quad \mu_y \ \textrm{continuous}, \ (P_1)_\sharp \pi_y = \mu_y,
\]
where $m = f_\sharp \mu$ and $\mu_y \in \mathcal{P}(H(y))$. 
We write moreover
\[
\nu = \int \nu_y m(dy) = \int (P_2)_\sharp \pi_y m(dy).
\]
Note that $\pi_y \in \Pi(\mu_y,\nu_y)$ is $d_N$-cyclically monotone and (since $d_{N}\llcorner_{H(y) \times H(y)}$  is finite and,
from point \ref{A:comegeo} of Assumption \ref{A:assu1}, lower semi-continuous) optimal for $m$-a.e. $y$. 
If $\nu(\mathcal{T}) = 1$, then the above formula is the disintegration of $\nu$ w.r.t. $H$.

\begin{theorem}\label{T:finale}
Assume that for all $y \in S$ there exists an optimal map $T_{y}$ from $\mu_{y}$ to $\nu_{y}$.
Then there exists a $\mu$-measurable map $T: X \to X$ such that 
\[
\int d_{N}(x,T(x)) \mu(dx) = \int d_{N}(x,z) \pi(dxdz), \qquad T_{\sharp} \mu = \nu.
\] 
\end{theorem}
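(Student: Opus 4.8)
The plan is to glue the fiberwise optimal maps $T_y$ into a single measurable map on $X$ and then verify that it transports $\mu$ to $\nu$ with the claimed cost. First I would set up the measurable structure: recall from Section~\ref{S:partition} that $\mu = \int_S \mu_y\, m(dy)$ with $m = f_\sharp\mu$ and $f:\mathcal T\to\mathcal S$ the $\mu$-measurable cross-section of the equivalence relation $H$, with $\mu_y\in\mathcal P(H(y))$; similarly $\pi = \int_S \pi_y\, m(dy)$ and $\nu = \int_S \nu_y\, m(dy)$ with $\nu_y = (P_2)_\sharp\pi_y$. Since by Proposition~\ref{P:puntini} the measure $\mu$ (and hence $\nu$, after the reduction at the end of Section~\ref{S:Optimal}) is concentrated on $\mathcal T$, it is enough to define $T$ on $\mathcal T$ and extend by the identity off $\mathcal T_e$ as in \eqref{E:extere}.

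The core step is to realize the assignment $y\mapsto T_y$ as something measurable in the joint variable, so that $T(x) := T_{f(x)}(x)$ makes sense as a $\mu$-measurable map. For this I would argue that the $T_y$ can be chosen so that $(y,x)\mapsto T_y(x)$ is $\mathcal A$-measurable: run the argument abstractly, noting that the graphs $\{(y,x,T_y(x))\}$ live inside the analytic set $H\cap(\mathcal T_e\times\mathcal T_e)$ suitably fibered over $y$, and apply the von Neumann selection theorem (Theorem~\ref{T:vanneuma}) together with the uniqueness part of the Disintegration Theorem~\ref{T:disintr} to patch fiberwise choices into a globally measurable one. Concretely, one forms $\gamma := \int_S (\id,T_y)_\sharp\mu_y\, m(dy)$, checks using the measurability of $y\mapsto\mu_y$ and the measurable selection of the maps that this defines a genuine element of $\mathcal P(X\times X)$ concentrated on a graph over $\mathcal T$, and lets $T$ be that graph. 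Then $T_\sharp\mu = \int_S (T_y)_\sharp\mu_y\, m(dy) = \int_S \nu_y\, m(dy) = \nu$ by the defining property $(T_y)_\sharp\mu_y = \nu_y$ of each optimal map and the linearity of push-forward under the disintegration.

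For the cost identity I would compute
\[
\int d_N(x,T(x))\,\mu(dx) = \int_S\!\Big(\!\int d_N(x,T_y(x))\,\mu_y(dx)\Big) m(dy) = \int_S\!\Big(\!\int d_N(x,z)\,\pi_y(dxdz)\Big) m(dy) = \int d_N(x,z)\,\pi(dxdz),
\]
where the middle equality uses that $T_y$ is \emph{optimal} between $\mu_y$ and $\nu_y$ while $\pi_y\in\Pi(\mu_y,\nu_y)$ is also optimal (as noted before the statement, $d_N\llcorner_{H(y)\times H(y)}$ is finite and, by \ref{A:comegeo} of Assumption~\ref{A:assu1}, lower semicontinuous, so all optimal couplings share the same cost), and the outer equalities are the disintegration formulas for $\mu$ and $\pi$ together with Fubini-type measurability of the integrand $y\mapsto\int d_N(x,T_y(x))\,\mu_y(dx)$, which follows from Lemma~\ref{L:measumuAt}-style arguments on the measurability of $y\mapsto d_N$-integrals against $\mu_y$.

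The main obstacle I expect is precisely the \textbf{joint measurability} of $y\mapsto T_y$: the theorem hypothesizes an optimal map on each fiber but says nothing about how these vary with $y$, so one cannot simply quote a fiberwise statement. The remedy is the standard one in this circle of ideas — use that the disintegration is strongly consistent (so the fibers $H(y)$ are the level sets of a measurable map $f$), express "$T_y$ is an optimal map from $\mu_y$ to $\nu_y$" as membership in an analytic subset of an appropriate Polish space of (plan, $y$) pairs, and invoke von Neumann selection to get an $\mathcal A$-measurable choice $y\mapsto T_y$; the $\mu$-negligible sets where $f$ was defined arbitrarily (cf. \eqref{E:TNngel}) do not affect the construction. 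Everything else (the two chains of equalities above) is bookkeeping with the disintegration theorem and the fact that optimal cost is plan-independent on each fiber.
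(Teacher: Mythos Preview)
Your proposal is correct and follows essentially the same route as the paper: both lift the problem to $S \times \mathcal{P}(X^2)$, identify the set of pairs $(y,\pi)$ with $\pi \in \Pi(\mu_y,\nu_y)$ optimal and concentrated on a graph as analytic, and apply von Neumann selection (Theorem~\ref{T:vanneuma}) to obtain a measurable choice $y\mapsto\pi_y$, after which the cost and marginal identities follow by disintegration exactly as you write. The one point the paper makes explicit and you leave as ``membership in an analytic subset'' is that \emph{being concentrated on a Borel graph} is an analytic condition on $\pi\in\mathcal{P}(X^2)$; the paper checks this by approximating with continuous functions and showing the relevant set is a countable intersection of projections of closed sets.
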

Recall $S \subset \mathcal{T}$ introduced in (\ref{E:TNngel}).

\begin{proof}
The idea is to use Theorem \ref{T:vanneuma}.

{\it Step 1.} Let $\mathbf{T} \subset S \times \mathcal{P}(X^2)$ be the set: for $y \in S$, $\mathbf{T}_y$ is the family of optimal transference plans in $\Pi(\tilde \mu_y,\tilde \nu_y)$ concentrated on a graph,
\[
\mathbf{T} = \Big\{ (y, \pi) \in S \times \mathcal{P}(X^2): \pi \in \Pi(\mu_y, \nu_y) \ \textrm{optimal}, \exists T : X \to X, \pi(\textrm{graph}(T)) = 1 \Big\}.
\]
where for optimal in $\Pi(\mu_{y},\nu_{y})$ we mean 
\[ 
\int d_{N} \pi = \min_{\pi \in \Pi(\mu_{y},\nu_{y})}\int d_{N} \pi. 
\]
Note that, since $\pi$ is a Borel measure, in the definition of $\mathbf{T}$, $T$ can be taken Borel.
Moreover the $y$ section $\mathbf{T}_y = \mathbf{T} \cap \{y\} \times \mathcal{P}(X^2)$ is not empty.

{\it Step 2.} Since the projection is a continuous map, then the set
\[
\tilde \Pi = \Big\{ (y, \pi): (P_1)_\sharp \pi =  \mu_y, (P_2)_\sharp \pi =  \nu_y \Big\}
\]
is a Borel subsets of $S \times \mathcal{P}(X^2)$: in fact it is the counter-image of the Borel set $\textrm{graph}( (\mu_y, \nu_y)) \subset 
S \times \mathcal{P}(X)^2$ w.r.t. the weakly continuous map $(y,\pi) \mapsto (y,(P_1)_\sharp \pi,(P_2)_\sharp  \pi)$.

Define the Borel function
\[
S \times \mathcal{P}(X^2) \ni (y, \pi) \mapsto f(y, \pi) :=
\begin{cases}
\int d_{N}  \pi &  \pi \in \Pi( \mu_y, \nu_y) \crcr
+ \infty & \textrm{otherwise}
\end{cases}
\]
It follows that $y \mapsto g(y) := \inf_{\pi} f(y, \pi)$ is an $\mathcal{A}$-function: we can redefine it on a $m$-negligible set to make it Borel, 
where $m$ is the quotient measure of $\mu$. Hence the set
\[
\tilde \Pi^{\textrm{opt}} = \bigg\{ (y, \pi):  \pi \in \tilde \Pi( \mu_y, \nu_y), \int d_{N} \pi \leq g(y) \bigg\} = \tilde \Pi \cap \Big\{ (y, \pi): \int d_{N}  \pi \leq g(y) \Big\}
\]
is Borel.

{\it Step 3.}  Now we show that the set of $\pi \in \mathcal{P}(X^{2})$ concentrated on a graph is analytic. By Borel Isomorphism Theorem, 
see \cite{Sri:courseborel} page 99, it is enough to prove the same statement for $\pi \in \mathcal{P}([0,1])^{2}$.
Consider the function
\[
 \mathcal{P}([0,1]^{2}) \times C_b([0,1],[0,1]) \ni ( \pi,\phi) \mapsto h( \pi,\phi) :=  \pi(\textrm{graph}(\phi)) \in [0,1].
\]
Since $\textrm{graph}(\phi)$ is compact, $h$ is u.s.c.. Hence the set $B^n = h^{-1}([1-2^{-n},1])$ is closed, so that
\[
\mathscr{T} = \bigcap_n P_1(B^n) = \Big\{  \pi: \forall \varepsilon > 0\ \exists \phi_\varepsilon,  \pi(\phi_\varepsilon) > 1 - \varepsilon \Big\}
\]
is an analytic set. It is easy to prove that $\pi \in \mathscr{T}$ iff $\pi$ is concentrated on a graph.

{\it Step 4.} It follows that
\[
\mathbf{T} = S \times \mathscr{T} \cap \tilde \Pi^{\textrm{opt}}
\]
is analytic and by Theorem \ref{T:vanneuma} there exists a $m$-measurable selection $y \mapsto \pi_y \in \mathbf{T}_y$. It is fairly easy to prove that $\int \pi_y m(dy)$ is concentrated on a graph, has the same transference cost of $\pi$ and belongs to $\Pi(\mu,\nu)$.
\end{proof}

It follows from Theorem \ref{T:finale}
that it is enough to solve for each $y \in S$ the Monge minimization problem with marginal $\mu_{y}$ and $\nu_{y}$ on the set $H(y)$. 
In order to solve it, we introduce an assumption on the geometry of the set $H(y)$.

\begin{assumption}
\label{A:clessidra3}
For a given $y \in S$, $H(y)$ satisfies Assumption \ref{A:clessidra3} if  
there exist two families of disjoint $\mathcal{A}$-measurable sets $\{ K_{t}\}_{t \in [0,1]}$ and $\{Q_{s} \}_{s\in [0,1]}$ such that 
\begin{itemize}
\item $\mu_{y}( H(y) \setminus \cup_{t \in [0,1]} K_{t} ) =  \nu_{y}( H(y) \setminus \cup_{s \in [0,1]} Q_{s} )=0$;
\item the associated quotient maps $\f_{K}$ and $\f_{Q}$ are respectively $\mu_{y}$-measurable and $\nu_{y}$-measurable; 
\item for $t\leq s$, $K_{t} \times Q_{s}\subset G$.
\end{itemize}
\end{assumption}
Note that Assumption \ref{A:clessidra3} was already introduced at page \pageref{I:clessidra3}.
In the measurability condition of Assumption \ref{A:clessidra3}, the set $[0,1]$ is equipped with the Borel $\sigma$-algebra $\mathcal{B}([0,1])$.
If $H(y)$ satisfies Assumption \ref{A:clessidra3} we can disintegrate the marginal measures $\mu_{y}$ and $\nu_{y}$ respectively
w.r.t. the family $\{K_{t}\}$ and $\{ Q_{s} \}$: 
\[
\mu_{y} = \int \mu_{y,t} m_{\mu_{y}}(dt), \quad \nu_{y} = \int \nu_{y,t} m_{\nu_{y}}(dt)
\] 
where $m_{\mu_{y}} = \f_{K \, \sharp} \mu_{y}$, $m_{\nu_{y}} = \f_{Q \, \sharp} \nu_{y}$ and the disintegrations are strongly consistent.

\begin{proposition}\label{P:alternativa}
Suppose that $H(y)$ satisfies Assumption \ref{A:clessidra3} and that the following conditions hold true:
\begin{itemize}
\item $m_{\mu_{y}}$ is continuous;
\item $\mu_{y,t}$ is continuous for $m_{\mu_{y}}$-a.e. $t \in [0,1]$;
\item $m_{\mu_{y}} ([0,t]) \geq m_{\nu_{y}} ([0,t])$ for $m_{\mu_{y}}$-a.e. $t \in [0,1]$.
\end{itemize}
Then there exists a $d_{N}$-cyclically monotone $\mu_{y}$-measurable map $T_{y}$ such that $T_{y\, \sharp} \mu_{y} = \nu_{y}$ and 
\[
\int d_{N}(x,T_{y}(x)) \mu_{y}(dx) = \int d_{N}(x,z) \pi_{y}(dxdz). 
\]
\end{proposition}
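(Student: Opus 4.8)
The plan is to combine a one-dimensional monotone rearrangement at the quotient level with a measurable selection of transport maps along the fibers of the disintegration given by Assumption \ref{A:clessidra3}, and then to deduce the cost identity from the fact that the resulting plan is concentrated on the $d_N$-cyclically monotone set $G$.

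First I would build the quotient map. Since $m_{\mu_{y}}$ is continuous, its cumulative distribution $F(t) = m_{\mu_{y}}([0,t])$ is continuous, so the usual generalized-inverse construction produces a nondecreasing (hence Borel) map $\psi:[0,1]\to[0,1]$ with $\psi_{\sharp}m_{\mu_{y}} = m_{\nu_{y}}$. The third hypothesis $m_{\mu_{y}}([0,t]) \ge m_{\nu_{y}}([0,t])$ forces $\psi(t)\ge t$ except on the set where $F$ is locally constant, hence for $m_{\mu_{y}}$-a.e.\ $t$. Next I turn to the fibers: for $m_{\mu_{y}}$-a.e.\ $t$ the measure $\mu_{y,t}$ is continuous and concentrated on $K_{t}$, so by the Borel isomorphism theorem there is a Borel map $T_{y,t}$ with $(T_{y,t})_{\sharp}\mu_{y,t} = \nu_{y,\psi(t)}$ (a continuous Borel probability on a Polish space pushes forward onto an arbitrary Borel probability). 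To obtain these maps measurably in $t$ I would repeat, almost verbatim, the selection argument in the proof of Theorem \ref{T:finale}: the set of $(t,\sigma)\in[0,1]\times\mathcal{P}(X^{2})$ with $\sigma\in\Pi(\mu_{y,t},\nu_{y,\psi(t)})$ and $\sigma$ concentrated on a graph is analytic — using that $t\mapsto\mu_{y,t}$ is $m_{\mu_{y}}$-measurable, that $t\mapsto\nu_{y,\psi(t)}$ is $m_{\mu_{y}}$-measurable because $\psi_{\sharp}m_{\mu_{y}}=m_{\nu_{y}}$, and Step 3 of that proof for the ``concentrated on a graph'' condition — and it has nonempty $t$-sections $m_{\mu_{y}}$-a.e.\ by the previous sentence; Theorem \ref{T:vanneuma} then yields a measurable selection $t\mapsto\sigma_{t}=(\id,T_{y,t})_{\sharp}\mu_{y,t}$.

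Then I would glue: set $T_{y}(x):=T_{y,\f_{K}(x)}(x)$, which is $\mu_{y}$-measurable. By construction $(T_{y})_{\sharp}\mu_{y}=\int(T_{y,t})_{\sharp}\mu_{y,t}\,m_{\mu_{y}}(dt)=\int\nu_{y,\psi(t)}\,m_{\mu_{y}}(dt)=\int\nu_{y,s}\,m_{\nu_{y}}(ds)=\nu_{y}$. Moreover, for $\mu_{y}$-a.e.\ $x$ one has $x\in K_{\f_{K}(x)}$ and $T_{y}(x)\in Q_{\psi(\f_{K}(x))}$ with $\f_{K}(x)\le\psi(\f_{K}(x))$, so the last bullet of Assumption \ref{A:clessidra3} gives $(x,T_{y}(x))\in G$; hence $(\id\times T_{y})_{\sharp}\mu_{y}$ is concentrated on $G$, which is $d_{N}$-cyclically monotone by Lemma \ref{L:analGR}. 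In particular $T_{y}$ is a $d_{N}$-cyclically monotone map. Finally, on $H(y)\times H(y)$ the cost $d_{N}$ is finite and lower semicontinuous (Assumption \ref{A:assu1}), so by Theorem \ref{T:kanto} every $d_{N}$-cyclically monotone element of $\Pi(\mu_{y},\nu_{y})$ is optimal; since $\pi_{y}$ is such a plan, $\int d_{N}(x,T_{y}(x))\,\mu_{y}(dx)$ equals the common optimal value $\int d_{N}(x,z)\,\pi_{y}(dxdz)$.

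The main obstacle is the measurable dependence on $t$ of the fiber maps $T_{y,t}$ — ensuring the relevant set of couplings is analytic and has nonempty sections so that von Neumann's theorem applies — but this is structurally identical to the selection already carried out in the proof of Theorem \ref{T:finale}, with the hypothesis that $\mu_{y,t}$ is continuous (rather than a standing assumption) supplying the nonemptiness of the sections. A secondary point requiring care is the a.e.\ inequality $\psi(t)\ge t$, which has to hold on a set of full $m_{\mu_{y}}$-measure in order for $\gr(T_{y})$ to land inside $G$.
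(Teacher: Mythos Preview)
Your proposal is correct and follows essentially the same route as the paper: build the increasing quotient map $\psi$ by monotone rearrangement, construct fiberwise maps $T_{y,t}$ using continuity of $\mu_{y,t}$, glue them measurably by the selection argument of Theorem \ref{T:finale}, and conclude $d_N$-cyclical monotonicity from $\gr(T_y)\subset G$. You are in fact more explicit than the paper on two points it leaves implicit---that $\psi(t)\ge t$ need only hold $m_{\mu_y}$-a.e., and that the cost identity follows from both $(\id\times T_y)_\sharp\mu_y$ and $\pi_y$ being $d_N$-cyclically monotone (hence optimal by Theorem \ref{T:kanto}, since $d_N$ is l.s.c.\ on $H(y)\times H(y)$).
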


\begin{proof}
{\it Step 1.}
Since $m_{\mu_{y}}$ is continuous and $m_{\mu_{y}} ([0,t]) \geq m_{\nu_{y}} ([0,t])$,
there exists an increasing map $\psi : [0,1] \to [0,1]$ such that  $\psi_{\sharp} m_{\mu_{y}} = m_{\nu_{y}}$. 

Moreover, since for $m_{\mu_{y}}$-a.e. $t \in [0,1]$ $\mu_{y,t}$ is continuous,  
there exists a Borel map $T_{t} : K_{t} \to Q_{\psi(t)}$ such that 
$T_{t\,\sharp}\mu_{y,t} = \nu_{y,\psi(t)}$ for $m_{\mu_{y}}$-a.e. $t \in [0,1]$. 
Since $\psi(t) \geq t$ the map $T_{t}$ is $d_{N}$-cyclically monotone, hence optimal between $\mu_{y,t}$ and $\nu_{y,t}$.

{\it Step 2.} Reasoning as in the proof of Theorem \ref{T:finale}, one can prove the existence of a $\mu_{y}$-measurable map $T_{y}: H(y) \to H(y)$ 
that is the gluing of all the maps $T_{t}$ constructed in {\it Step 1.}.
Hence there exists a $\mu_{y}$-measurable map $T_{y}: H(y) \to H(y)$ such that $T_{y\,\sharp} \mu_{y,t} = \nu_{y,\psi(t)}$. 
It follows from Assumption \ref{A:clessidra3} that $\gr( T_{y}) \subset G$, hence $T_{y}$ is $d_{N}$-cyclically monotone and  
\[
T_{\sharp} \mu_{y} = \int T_{\sharp}\mu_{y,t} m_{\mu_{y}}(dt) = \int \nu_{y,\psi(t)} m_{\mu_{y}}(dt) = 
\int \nu_{y,t} (\psi_{\sharp}m_{\mu_{y}})(dt) = \nu_{y}.  
\]
\end{proof}

The next corollary follows straightforwardly and it sums up all the results.

\begin{corollary}
Let $\pi \in \Pi(\mu,\nu)$ be concentrated on a $d_{N}$-cyclically monotone set $\Gamma$ satisfying Assumption \ref{A:assu1}.
Assume that $\mu$ satisfies Assumption \ref{A:NDEatom} and for $m$-a.e. $y \in S$ the set $H(y)$ satisfies Assumption \ref{A:clessidra3}. 
If for $m$-a.e. $y \in S$ the hypothesis of Proposition \ref{P:alternativa} are verified,
then there exists an Borel map $T : X \to X$ such that
\[
\int d_{N}(x,T(x)) \mu(dx) = \int d_{N}(x,z) \pi(dxdz), \qquad T_{\sharp} \mu = \nu.
\]
If $\pi$ is also optimal, then $T$ solves the Monge minimization problem.
\end{corollary}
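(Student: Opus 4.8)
The plan is to assemble the pieces already established in the excerpt. First I would invoke the reduction to the transport set: by Lemma \ref{L:mapoutside} and the extension formula (\ref{E:extere}), it suffices to construct the map on $\mathcal{T}_e$, and since $\mu$ satisfies Assumption \ref{A:NDEatom}, Proposition \ref{P:puntini} gives $\mu(\mathcal{T}_e\setminus\mathcal{T})=0$, so we may work on $\mathcal{T}$ with $\mu(\mathcal{T})=1$. Because $\Gamma$ satisfies Assumption \ref{A:assu1}, Proposition \ref{P:sicogrF} produces a $\mu$-measurable quotient map $f:\mathcal{T}\to\mathcal{T}$ for the equivalence relation $H$, and the Disintegration Theorem \ref{T:disintr} yields the strongly consistent disintegrations $\mu=\int_S\mu_y\,m(dy)$, $\pi=\int_S\pi_y\,m(dy)$ with $\pi_y\in\Pi(\mu_y,\nu_y)$ $d_N$-cyclically monotone (hence optimal by lower semicontinuity of $d_N$ on $H(y)\times H(y)$) for $m$-a.e.\ $y$. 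By Proposition \ref{P:nonatoms} the $\mu_y$ are continuous for $m$-a.e.\ $y$.

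Next I would handle the one-dimensional-type problem on each equivalence class. For $m$-a.e.\ $y\in S$, Assumption \ref{A:clessidra3} holds and the hypotheses of Proposition \ref{P:alternativa} are verified, so that proposition gives a $d_N$-cyclically monotone, $\mu_y$-measurable map $T_y$ with $T_{y\,\sharp}\mu_y=\nu_y$ and $\int d_N(x,T_y(x))\,\mu_y(dx)=\int d_N(x,z)\,\pi_y(dxdz)$. In particular, for $m$-a.e.\ $y$ there exists an optimal map from $\mu_y$ to $\nu_y$, which is exactly the hypothesis of Theorem \ref{T:finale}.

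Finally I would glue. Theorem \ref{T:finale} then produces a $\mu$-measurable $T:X\to X$ with $T_\sharp\mu=\nu$ and $\int d_N(x,T(x))\,\mu(dx)=\int d_N(x,z)\,\pi(dxdz)$; composing with the extension (\ref{E:extere}) off $\mathcal{T}_e$ (a $\mu$-null, $\nu$-null modification that does not change the cost, by Lemma \ref{L:mapoutside} and the observation $\mu(b(\mathcal T))=\nu(a(\mathcal T))=0$) gives the map on all of $X$. A $\mu$-measurable map can be made Borel by modifying it on a $\mu$-negligible set, which justifies the stated Borel regularity. The last sentence, that $T$ solves the Monge problem when $\pi$ is optimal, is immediate: any admissible map $S$ gives a plan $(\mathrm{Id}\times S)_\sharp\mu\in\Pi(\mu,\nu)$, so $\mathcal I(T)=\mathcal I(\pi)=\min_{\Pi(\mu,\nu)}\mathcal I\leq\mathcal I(S)$.

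The only real subtlety is bookkeeping at the level of quotient measures: one must check that the quotient measure $m_\pi$ of $\pi$ coincides with $m=f_\sharp\mu$ (noted after (\ref{E:disiparzpi})), that $\nu_y=(P_2)_\sharp\pi_y$ is consistent across the two disintegrations, and that the measurable-selection step in Theorem \ref{T:finale} applied with the family $\{T_y\}$ from Proposition \ref{P:alternativa} indeed yields a globally measurable $T$ with the additive cost identity; all of these are routine given the cited results, so no step presents a genuine obstacle — the corollary is essentially an assembly of Proposition \ref{P:puntini}, Proposition \ref{P:nonatoms}, Proposition \ref{P:alternativa} and Theorem \ref{T:finale}.
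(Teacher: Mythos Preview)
Your proposal is correct and follows exactly the approach of the paper: the corollary is stated there with no proof beyond the remark that it ``follows straightforwardly and it sums up all the results,'' and the summary paragraph immediately after it traces precisely the chain Proposition~\ref{P:puntini} $\to$ Proposition~\ref{P:nonatoms} $\to$ Proposition~\ref{P:alternativa} $\to$ Theorem~\ref{T:finale} that you spell out. Your observation that the $\mu$-measurable map from Theorem~\ref{T:finale} can be modified on a $\mu$-null set to become Borel is a useful clarification, since the paper's statement says ``Borel'' while Theorem~\ref{T:finale} only delivers $\mu$-measurability.
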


Let us summarize the theoretical results obtained so far.
Let $\pi \in \Pi(\mu,\nu)$ be a $d_{N}$-cyclically monotone transference plan concentrated on a set $\Gamma$. 
Consider the corresponding family of chain of transport rays and assume  
that $\Gamma$ satisfies Assumption \ref{A:assu1}. 
Then the partition induced by $H$ permits to obtain a strongly consistent disintegration formula of $\mu,\nu$ and $\pi$ holds.
If $\mu$ satisfies Assumption \ref{A:NDEatom} then the set of initial points is $\mu$-negligible and 
the conditional probabilities $\mu_{y}$ are continuous.
 
Since the geometry of $H(y)$ can be wild, we need another assumption to build a $d_{N}$-monotone transference map between $\mu_{y}$ 
and $\nu_{y}$. 
If $H(y)$ satisfies Assumption \ref{A:clessidra3} we can perform another disintegration and, under additional regularity of 
the conditional probabilities of $\mu_{y}$ and of the quotient measure of $\mu_{y}$, we prove the existence of a 
$d_{N}$-monotone transference map between $\mu_{y}$ and $\nu_{y}$. 
Applying the same reasoning for $m$-a.e. $y$ we prove the existence of a transport map $T$ between $\mu$ and $\nu$ that has the 
same transference cost of the given $d_{N}$-cyclically monotone plan $\pi$.

\subsection{Example}\label{ss:esempio}

We conclude this Section with the analysis of a particular case in which the set $H(y)$ satisfies Assumption \ref{A:clessidra3}.  
The hypothesis of Proposition \ref{P:alternativa} and Assumption \ref{A:clessidra3} 
were partially inspired by this example. 
What follows will be useful in the next Section, 
however, since it is not only related to what will be proved in Section \ref{S:application}, we have decided to present it here.

Fix the following notation: a continuous curve $\gamma:[0,1] \to X$ is \emph{increasing} if for $t,s \in [0,1]$
\[
t\leq s \Longrightarrow (\gamma(t),\gamma(s)) \in G
\]
\begin{definition}[Hourglass sets]\label{D:clessidra}
For $z \in X$ define the \emph{hourglass set}
\[
K(z):= \Big\{ (x,y) \in X \times X :  (x,z), (z,y) \in G   \Big\}.
\]
\end{definition}
Assume that there exists an increasing curve $\gamma$ such that  
\[ 
H(y) \times X \cap \Gamma \subset \bigcup_{t \in [0,1]}  K(\gamma(t)) \cap \Gamma.
\]
Note that this assumption is equivalent to request 
that on each chain of transport rays the branching structures can appear only along an increasing curve $\gamma$.

\begin{figure}
\label{Fi:clessidra}
\psfrag{g}{$\gamma$}
\psfrag{x}{$x$}
\psfrag{y}{$y$}
\psfrag{Z}{$z$}
\centerline{\resizebox{11cm}{3cm}{\includegraphics{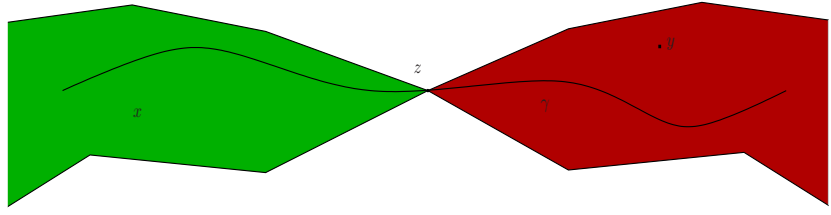}}}
\caption{The hourglass set $K(z)$.}
\end{figure}

Then $H(y)$ satisfies Assumption \ref{A:clessidra3}. 
Indeed first notice that $K(z)$ is analytic, then define the family of sets
\[
K_{t} : = G^{-1}(\gamma(t)) \setminus \bigcap_{s < t} G^{-1}(\gamma(s)), \qquad 
Q_{t} : = G(\gamma(t)) \setminus \bigcap_{t < s} G(\gamma(s)).
\]
Since $\gamma$ is increasing,  $K_{t}$ and $Q_{s}$ are $\mathcal{A}$-measurable and the quotient maps are $\mathcal{A}$-measurable: 
let $[a,b] \subset [0,1]$
\[
\f_{K}^{-1}([a,b]) = \bigcup_{t \in [a,b]} K_{t} = G^{-1}(\gamma(b)) \setminus G^{-1}(\gamma(a)) \cup K_{a}\, \in \mathcal{A}
\]
and the same calculation holds true for $\f_{Q}$. From the increasing property of $\gamma$ it follows that $K_{t} \times Q_{s} \in G$ for every 
$0 \leq t \leq s \leq 1$.
Again from the increasing property of $\gamma$ it follows that $m_{\mu_{y}}([0,t]) \geq m_{\nu_{y}}([0,t])$.

\bigskip

\section{An application}\label{S:application}

Throughout this section $|\cdot|$ will be the euclidean distance of $\erre^{d}$.

Let $C \subset \erre^{d}$ be an open convex set such that $M:=\partial C$ is a smooth compact sub-manifold of $\erre^{d}$ of dimension $d-1$. 
Let $X: = \erre^{d}\setminus C$. Clearly $X$ endowed with the euclidean topology is a Polish space.

Consider the following geodesic distance: $d_{M}:X\times X \to [0,+\infty]$:  
\begin{equation}\label{E:distostc}
d_{M}(x,y) : = \inf \{ L(\gamma): \gamma \in \lip ([0,1], X), \gamma(0)=x,\gamma(1)=y \},
\end{equation}
where $L$ is the standard euclidean arc-length: $L(\gamma)=\int |\dot \gamma|$. 
Hence $M$ can be seen as an obstacle for geodesics connecting points in $X$.
Note that any minimizing sequence has uniformly bounded Lipschitz constant, therefore in the definition of $d_{M}$ we can substitute $\inf$ with $\min$. 
Hence $d_{M}$ is a geodesic distance on $X$.

We will show that given $\mu,\nu \in \mathcal{P}(\erre^{d})$ with $\mu\ll \mathcal{L}^{d}$, 
the Monge minimization problem with geodesic cost $d_{M}$ admits a solution.

From now on we will assume that $\mu \ll \mathcal{L}^{d}$.
and all the sets and structures introduced during the paper will be referred to this Monge problem.

The strategy to solve the Monge minimization problem is the one used in Section \ref{S:Solution}: 
build an optimal map on each equivalence class $H(y)$ and then use Theorem \ref{T:finale}. 
To prove the existence these optimal maps we will show that the geometry of the chain of transport rays $H(y)$
is the one presented in Example \ref{ss:esempio} and that the hypothesis of Proposition \ref{P:alternativa} are satisfied.

\begin{lemma}\label{L:cont}
The distance $d_{M}$ is a continuous map.
\end{lemma}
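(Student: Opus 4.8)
The plan is to show continuity of $d_M$ on $X \times X$ by a two-sided argument: first an upper bound that is local in nature, then a matching lower semicontinuity statement, and finally combine them. Throughout we work with the euclidean topology on $X = \erre^d \setminus C$, and recall that $M = \partial C$ is smooth and compact.

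\emph{Upper semicontinuity.} Fix $(x_0,y_0) \in X \times X$ and let $(x_n,y_n) \to (x_0,y_0)$ in the euclidean metric. The key observation is that, because $M$ is a smooth compact hypersurface, $X$ is locally connected by short euclidean-length paths: there is a radius $\rho > 0$ (depending only on the geometry of $M$, via a uniform tubular-neighbourhood bound) and a constant $L_0$ such that any two points $p,q \in X$ with $|p-q| < \rho$ can be joined by a curve in $X$ of euclidean length at most $L_0 |p-q|$. Indeed, away from $M$ one just uses the straight segment; near $M$, one uses the convexity of $C$ (projecting onto $M$ and moving along a short geodesic of $M$, or equivalently exploiting that the nearest-point projection onto the convex set $C$ is $1$-Lipschitz) to produce a controlled detour. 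Concatenating a near-optimal curve from $x_0$ to $y_0$ with these short connectors from $x_n$ to $x_0$ and from $y_0$ to $y_n$ yields $d_M(x_n,y_n) \le d_M(x_0,y_0) + L_0(|x_n - x_0| + |y_n - y_0|)$, hence $\limsup_n d_M(x_n,y_n) \le d_M(x_0,y_0)$.

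\emph{Lower semicontinuity.} For the reverse inequality, suppose $(x_n,y_n) \to (x_0,y_0)$ and pass to a subsequence realizing $\liminf_n d_M(x_n,y_n) = \lim_n d_M(x_n,y_n) =: \ell$; we may assume $\ell < +\infty$. For each $n$ let $\gamma_n : [0,1] \to X$ be a curve with $\gamma_n(0) = x_n$, $\gamma_n(1) = y_n$ and $L(\gamma_n) = d_M(x_n,y_n)$, parametrized with constant euclidean speed $v_n = L(\gamma_n) \to \ell$. These curves are uniformly Lipschitz and lie in a bounded subset of $X$ (since their lengths and endpoints are bounded), so by Arzelà--Ascoli a subsequence converges uniformly to a Lipschitz curve $\gamma_\infty : [0,1] \to X$ (closedness of $X$ ensures the limit stays in $X$) with $\gamma_\infty(0) = x_0$, $\gamma_\infty(1) = y_0$. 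By lower semicontinuity of the euclidean length functional under uniform convergence, $L(\gamma_\infty) \le \liminf_n L(\gamma_n) = \ell$. Since $\gamma_\infty$ is an admissible competitor, $d_M(x_0,y_0) \le L(\gamma_\infty) \le \ell = \liminf_n d_M(x_n,y_n)$.

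Combining the two bounds gives $\lim_n d_M(x_n,y_n) = d_M(x_0,y_0)$ along an arbitrary sequence, which is continuity. I expect the main obstacle to be the upper bound near $M$: one must make quantitative the claim that points close together in euclidean distance are close in $d_M$, uniformly in a neighbourhood of the obstacle. This is where smoothness and compactness of $M$, together with convexity of $C$, are essential — they give a uniform lower bound on the reach of $M$ and hence a uniform modulus for the "short connector" construction; without such uniformity the argument would only yield continuity away from $M$. The lower semicontinuity half is the standard direct-method compactness argument and should be routine given that minimizing sequences for $d_M$ have uniformly bounded Lipschitz constant, as already noted in the paragraph preceding the lemma.
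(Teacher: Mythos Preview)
Your proof is correct. For the upper-semicontinuity half you use the same idea as the paper: smoothness and compactness of $M$ guarantee short connector curves in $X$ from $x_n$ to $x_0$ and from $y_n$ to $y_0$, which concatenated with a near-optimal curve for $d_M(x_0,y_0)$ give $\limsup_n d_M(x_n,y_n)\le d_M(x_0,y_0)$. The difference lies in the lower-semicontinuity half. You run a direct-method argument: take constant-speed geodesics $\gamma_n$ for $d_M(x_n,y_n)$, extract a uniform limit by Arzel\`a--Ascoli (closedness of $X$ keeps the limit admissible), and invoke lower semicontinuity of the euclidean length. The paper instead reuses the very same connectors in the reverse direction: gluing the connectors to a near-optimal curve for $d_M(x_n,y_n)$ yields a competitor for $d_M(x_0,y_0)$, giving $d_M(x_0,y_0)\le \liminf_n d_M(x_n,y_n)$ immediately. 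The paper's route is more economical---once the connectors are built, both inequalities fall out symmetrically with no compactness argument needed. Your Arzel\`a--Ascoli route, on the other hand, is more robust: it establishes l.s.c.\ of $d_M$ without appealing to smoothness of $M$ at all (only closedness of $X$ and existence of minimizers are used), isolating the geometric input to the u.s.c.\ half where it is genuinely required. A minor remark: for continuity you only need the connector lengths to tend to zero, not the uniform Lipschitz bound $d_M(p,q)\le L_0|p-q|$ you assert; the weaker statement is what the paper uses and is all that is needed.
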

\begin{proof}
{\it Step 1.} Let $\{x_{n}\}_{n\in \enne},\{y_{n}\}_{n\in \enne} \in X$ such that $|x_{n} - x| \to 0$  $|y_{n} - y| \to 0$. 
Since the boundary of $X$ is a smooth manifold,
for every $n \in \enne$ there exist curves $\gamma_{1,n},\gamma_{2,n} \in \lip([0,1],X)$ 
such that 
\begin{itemize}
\item $\gamma_{1,n}(0)=x, \gamma_{1,n}(1)=x_{n}$; 
\item $\gamma_{2,n}(0)=y, \gamma_{2,n}(1)=y_{n}$; 
\item $L(\gamma_{i,n}) \to 0$ as $n \to +\infty$, for $i=1,2$.
\end{itemize}
Consider $\gamma_{n}\in \lip([0,1],X)$ such that $\gamma_{n}(0)=x_{n}, \gamma_{n}(1)=y_{n}$ and $L(\gamma_{n})\leq d_{M}(x_{n},y_{n}) + 2^{-n}$. 
Gluing $\gamma_{1,n}$ and $\gamma_{2,n}$ to $\gamma_{n}$ it follows
\[
d_{M}(x,y) \leq d_{M}(x_{n},y_{n}) + 2^{-n} + L(\gamma_{1,n}) + L(\gamma_{2,n}).
\]
Hence $d_{M}$ is l.s.c..

{\it Step 2.} Taking a minimizing sequence of admissible curves for $d_{M}(x,y)$ and gluing them with $\gamma_{i,n}$ as in \emph{Step 1.}, 
it is fairly easy to prove that $d_{M}$ is u.s.c. and therefore continuous.
\end{proof}

As a corollary we have the existence of an optimal transference plan $\pi$. Hence from now on $\pi$ will be an optimal transference plan
and all the structures defined during the paper starting from a generic $d_{N}$-cyclically monotone plan, are referred to it.
Moreover there exists $\varphi \in \lip_{d_{M}}(X,\erre)$ such that $\Gamma =\Gamma' = G = \{(x,y)\in X\times X: \varphi(x)-\varphi(y)=d_{M}(x,y)\}$. 
Note that $\Gamma$ is closed.

The next result shows that  the sets $H(y)$ have the structure of Example \ref{ss:esempio}. 
The convex assumption on the obstacle is fundamental: 
each transport rays is composed by a straight line, a geodesic on $M$ where branching structures are allowed and again a straight line.

\begin{lemma}\label{L:geoH}
For all $y \in S$, $H(y)$ has the geometry of Example \ref{ss:esempio}: 
there exists an increasing curve $\gamma_{y}: [0,1] \to X$ such that
\[ 
H(y) \times X \cap \Gamma \subset \bigcup_{t \in [0,1]}  K(\gamma_{y}(t)) \cap \Gamma.
\]
\end{lemma}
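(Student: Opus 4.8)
The plan is to exploit the very rigid geometry of $d_M$-geodesics in $X = \erre^d \setminus C$ with $C$ convex and compact. First I would establish the structural description of a single transport ray. Fix $(w,z)\in\Gamma = G$, so that $\varphi(w)-\varphi(z)=d_M(w,z)$, and let $\gamma$ be a $d_M$-geodesic from $w$ to $z$. Because $C$ is convex, a shortest path in $X$ between two points either is the straight Euclidean segment (if that segment avoids the open set $C$) or consists of a straight segment leaving $w$ tangentially to $M=\partial C$, followed by an arc lying on $M$ (a geodesic of the boundary hypersurface, where genuine branching of $d_M$-geodesics can occur), followed by a straight segment arriving at $z$ tangentially to $M$. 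This trichotomy "line -- arc on $M$ -- line" is the key structural fact; it uses smoothness and convexity of $M$ in an essential way, and I would prove it by a standard first-variation / supporting-hyperplane argument (any excursion of a length-minimizer into the region near $C$ must hug $M$, and once it leaves $M$ it goes straight because $X$ is otherwise flat).

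Next I would pass from one ray to a whole chain of transport rays $H(y)$. A point $(x,y)\in H(y)\times X\cap\Gamma$ means that $x$ lies on some transport ray and $(x',y)\in\Gamma$ for the representative; the transport rays meeting $H(y)$ are linked by the equivalence relation $H$, whose concatenation points lie in $\mathcal T$ (interior points of rays), never in $a\cup b$. The claim to extract is that all the branching in the chain $H(y)$ takes place along a single curve lying on $M$. Concretely I would show: whenever two transport rays of the chain branch (share a point but then separate), that common point lies on $M$, and the locus of all such branch points, together with the boundary arcs of the rays themselves, can be arranged along one increasing curve $\gamma_y:[0,1]\to X$. Here "increasing" is in the sense defined just before Definition \ref{D:clessidra}: $t\le s \Rightarrow (\gamma_y(t),\gamma_y(s))\in G$. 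The monotonicity of $\varphi$ along oriented rays (it decreases by exactly $d_M$) gives a natural parameter: order the branch/boundary locus by the value of $\varphi$. Since $G$ is a partial order on $\mathcal T_e$ (Proposition \ref{P:equiv}) and the potential $\varphi$ is a calibration for it, comparable points along this locus are automatically $G$-related in the right direction, which is precisely what "increasing" demands.

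Finally I would verify the containment $H(y)\times X\cap\Gamma\subset\bigcup_{t}K(\gamma_y(t))\cap\Gamma$. Take $(x',y')\in H(y)\times X\cap\Gamma$. By the trichotomy, the transport ray carrying $x'$ (with endpoints in $\Gamma'$) has the form line -- arc on $M$ -- line; its arc portion is a sub-arc of (or meets) the curve $\gamma_y$ built above, so there is $t$ with $\gamma_y(t)$ on that ray, hence $(x',\gamma_y(t))\in G$ or $(\gamma_y(t),x')\in G$ according to position; likewise $\gamma_y(t)$ and $y'$ are $G$-comparable through the same ray. Choosing $t$ so that $\gamma_y(t)$ sits between $x'$ and $y'$ along the ray gives $(x',\gamma_y(t))\in G$ and $(\gamma_y(t),y')\in G$, i.e. $(x',y')\in K(\gamma_y(t))$; and $(x',y')\in\Gamma$ by hypothesis. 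Measurability of $\gamma_y$ and Borel dependence on $y\in S$ follow from the measurable selection machinery already invoked (Theorem \ref{T:vanneuma}), applied to the analytic set of admissible increasing curves compatible with $H(y)$. The main obstacle is the intermediate step: proving rigorously that the branch points of an entire chain of transport rays all lie on $M$ and can be threaded onto a single increasing curve — this is where the convexity of the obstacle and the fine structure of $d_M$-geodesics on the smooth hypersurface $M$ must be used carefully, and it is the heart of the lemma.
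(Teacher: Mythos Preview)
Your outline aligns with the paper's approach: both rest on the structural trichotomy line--arc on $M$--line for $d_M$-geodesics and the consequence that branching can occur only on $M$. The difference is at the step you yourself call ``the heart of the lemma'': you flag, but do not resolve, why the branch locus of the entire chain $H(y)$ can be threaded onto a single increasing curve. The paper supplies a concrete mechanism there. It defines
\[
Z:=\bigcap_{z\in H(y)\cap M} G^{-1}(z)\cap M,\qquad W:=\bigcap_{z\in H(y)\cap M} G(z)\cap M,
\]
argues from $d_M$-monotonicity together with convexity and smoothness of $M$ that for every $z\in H(y)\cap M$ the set $G^{-1}(z)\cap M$ is contained in one and the same geodesic of the manifold $M$, and then uses compactness of $M$ to conclude that $Z=\{z_0\}$ and $W=\{w_0\}$ are singletons with $(z_0,w_0)\in G$. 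The curve $\gamma_y$ is then simply the $M$-geodesic from $z_0$ to $w_0$, and the hourglass containment follows. Your proposal to order branch points by the value of $\varphi$ is compatible with this construction once you know all branch points lie on a common $M$-geodesic; but that fact is exactly what your sketch leaves open, so as written the argument has a gap at its central point. (Your remarks on the measurable dependence $y\mapsto\gamma_y$ go beyond what the paper addresses in this lemma and are not needed for the statement as formulated.)
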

\begin{proof}
Since due to convexity and smoothness of the obstacle, the geodesics of $d_{M}$ are smooth and composed by a first straight line, a geodesic of the manifold and a final straight line, a branching structure can appear only on the manifold $M$. 
If $H(y) \neq R(y)$, consider the following sets: 
\[
Z:= \bigcap_{z\in  H(y)\cap M}  G^{-1}(z) \cap M, \quad W:= \bigcap_{z \in  H(y)\cap M}  G(z) \cap M.
\]
By $d_{M}$-monotonicity, smoothness and convexity of $M$, for all $z\in H(y)\cap M$ the set $ G^{-1}(z) \cap M$ is 
always contained in the same geodesic of $M$. 
Using the compactness of $M$, $Z= \{ z\}$ and $W = \{w \}$ and $(z,w) \in G$. 
Consider the unique increasing geodesic $\gamma_{y} \in \gamma_{[z,w]}$ such that $\gamma_{y} = G(z) \cap G^{-1}(w)$.
Hence 
\[ 
H(y) \times X \cap \Gamma \subset \bigcup_{t \in [0,1]}  K(\gamma_{y}(t)) \cap \Gamma.
\]
\end{proof}

\begin{remark}\label{O:sempre}
From Lemma \ref{L:cont} and Lemma \ref{L:geoH} it follows that, for any transference plan $\pi$, 
the set of chain of transport rays $H$ satisfies Assumption \ref{A:assu1}. 

Indeed consider $H(y)$ and the corresponding geodesic $\gamma_{y}$ from Lemma \ref{L:geoH}.
Then take any sequence $x_{n} \in H(y)$ such that $|x_{n} - x| \to 0$ as $n \to +\infty$. 
Note that there exist $s_{n} \in [0,1]$ and $t_{n}\in \erre$ such that 
$x_{n} =  \gamma_{y}(s_{n}) + t_{n} \nabla\gamma_{y}(s_{n})$. Possibly passing to subsequences, $s_{n}\to s$, $t_{n} \to t$ with 
$x = \gamma_{y}(s) + t \nabla\gamma_{y}(s)$.
Since $(\gamma_{y}(s_{n}), x_{n}) \in G$ and $G$ is closed it follows that $(\gamma_{y}(s), x) \in G$.
From $(y,\gamma_{y}(s)) \in R$ follows $x \in H(y)$. Hence point \ref{A:comegeo} of Assumption \ref{A:assu1} holds true.

Point \ref{A:comelocpt} of Assumption \ref{A:assu1} follows directly from the continuity of $d_{N}$.
\end{remark}

In the following Lemma we prove that the problem can be reduced to the equivalence classes $H(y)$.
We use the following notation: the quotient map induced by $H$ will be denoted by $f^{y}$ and the corresponding quotient measure $f^{y}_{\sharp}\mu$ by $m_{H}$.

\begin{lemma}\label{L:appl}
The $\mu$-measure of the set of initial points is zero, hence
\[
\mu = \int \mu_{y} m_{H}(dy).
\]
Moreover $\mu_{y}$ is continuous for $m_{H}$-a.e. $y$. 
\end{lemma}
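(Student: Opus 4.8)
The plan is to reduce everything to the abstract machinery of Section~\ref{S:pt iniziali e finali}. By Proposition~\ref{P:puntini} and Proposition~\ref{P:nonatoms} it is enough to verify that $\mu$ satisfies the non-degeneracy Assumption~\ref{A:NDEatom} for the transport structure of the optimal $d_M$-cyclically monotone plan $\pi$ fixed above; recall that this structure is built from $\Gamma=\Gamma'=G=\{(x,y):\varphi(x)-\varphi(y)=d_M(x,y)\}$ for a $d_M$-Kantorovich potential $\varphi\in\lip_{d_M}(X,\erre)$, whose existence follows from Lemma~\ref{L:cont} and Kantorovich duality. Granting Assumption~\ref{A:NDEatom}, Proposition~\ref{P:puntini} yields $\mu(\mathcal T_e\setminus\mathcal T)=0$ --- so the set of initial points is $\mu$-negligible and the disintegration $\mu=\int\mu_y\,m_H(dy)$ holds on all of $\mathcal T$ --- and Proposition~\ref{P:nonatoms} gives the continuity of $\mu_y$ for $m_H$-a.e.\ $y$. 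Thus the entire content of the lemma is the verification of Assumption~\ref{A:NDEatom}, and this is the place where the obstacle-specific geometry is used.

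To verify it, fix an analytic $A\subset\mathcal T_e$ with $\mu(A)>0$; by the remark following Assumption~\ref{A:NDEatom} we may take $A$ compact. Since $\mu\ll\mathcal L^d$ and $M$ is a $(d-1)$-dimensional submanifold, $\mu(M)=0$, so $\mu\big(A\cap\{\dist(\cdot,M)\le\delta\}\big)\to0$ as $\delta\to0$; pick $\delta>0$ with $\mu(\tilde A)\ge\tfrac12\mu(A)$ for $\tilde A:=A\cap\{\dist(\cdot,M)\ge\delta\}$. Because the evolution is monotone (if $B\subset A$ then $B_t\subset A_t$), it suffices to bound $\mu(\tilde A_{t_n})$ below by $C\mu(\tilde A)$ along some $t_n\to0$. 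The point of passing to $\tilde A$ is that geodesics of $d_M$ are straight Euclidean segments as long as they do not meet $M$ (here the smoothness and convexity of $C$ enter), so for $|t|<\delta$ the geodesic realising the $t$-translation of a point $x\in\tilde A$ cannot touch $M$ --- a point $p\in M$ on it would give $\delta\le|x-p|\le d_M(x,p)\le|t|<\delta$ --- whence in an open neighbourhood of $\tilde A$ the whole transport structure agrees with the one for the \emph{Euclidean} distance cost $|\cdot|$, and in particular is non-branching. Concretely, for $|t|<\delta$ the set $\tilde A_t$ coincides, up to $\mu$-null sets, with the image of $\tilde A$ under the Euclidean ray flow $x\mapsto x-t\nabla\varphi(x)$, defined $\mathcal L^d$-a.e.\ on $\mathcal T$ (where $|\nabla\varphi|=1$), hence $\mu$-a.e.

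With this reduction the claim becomes the non-degeneracy of an absolutely continuous first marginal under the transport-ray flow for the plain Euclidean distance cost, which is the situation treated in the non-branching references (\cite{caffafeldmc,feldcann:mani}, and \cite{biacava:streconv} for the abstract formulation that applies here verbatim once one is away from $M$): one truncates the rays near their endpoints, uses that on the truncated transport set the ray flow $(x,t)\mapsto x-t\nabla\varphi(x)$ is locally bi-Lipschitz with Lipschitz constants tending to $1$ as $t\to0$ and that its transversal Jacobian is monotone along each ray, and couples this with the $L^1$-continuity in $t$ of the density of $\mu$ precomposed with the flow to obtain $\mu(\tilde A_t)=(1-o(1))\mu(\tilde A)$ as $t\to0$ for $A$ made of interior ray-points; for $A$ charging initial or final ray-points one flows away from the endpoint in question (forward resp.\ backward), where the Jacobian is bounded below, and one invokes again the disjointness of the $\tilde A_t$ for distinct $t$. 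The measurability of $t\mapsto\mu(\tilde A_t)$ required along the way is Lemma~\ref{L:measumuAt}.

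I expect the genuine difficulty to be the endpoint analysis concealed in the last paragraph: exactly at the initial and final points of the rays the ray flow degenerates, and the non-triviality of Assumption~\ref{A:NDEatom} there is precisely the phenomenon Proposition~\ref{P:puntini} feeds on. The reduction to the Euclidean cost disposes of the obstacle, but the endpoint estimate must still be carried out (or carefully quoted) from the classical theory; everything else --- the compactness and away-from-$M$ reductions, the measurability bookkeeping, and the closing appeal to Propositions~\ref{P:puntini} and~\ref{P:nonatoms} --- is routine.
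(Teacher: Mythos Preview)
Your approach is correct and matches the paper's: both reduce to verifying Assumption~\ref{A:NDEatom}, localize to a compact set at positive Euclidean distance from $M$ so that $d_M$ coincides with $|\cdot|$ there, observe that the restricted transport structure is then $|\cdot|$-cyclically monotone and non-branching, and invoke the known non-degeneracy in the Euclidean setting. The only cosmetic differences are that the paper first passes from $\mu$ to $\mathcal L^d$ (writing $\mu=\rho\,\mathcal L^d$ and noting that continuity of the $\mathcal L^d$-conditionals propagates to $\mu$), and closes by quoting the $MCP(0,d)$ result of \cite{biacava:streconv} rather than the explicit ray-flow Jacobian estimate you sketch.
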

\begin{proof}

{\it Step 1.} Since $\mu\ll \mathcal{L}^{d}$, it is enough to prove that the set of initial points is $\mathcal{L}^{d}$-negligible and 
that the disintegration w.r.t. $H$ of $\mathcal{L}^{d}$ restricted to any compact set has continuous conditional probabilities. 
Indeed if $\mathcal{L}^{d}\llcorner_{K} = \int \eta_{y} m_{\mathcal{L}^{d}}(dy)$ and $\mu = \rho \mathcal{L}^{d}$ then $m_{\mu}\ll m_{\mathcal{L}^{d}}$ and
\[
\mu \llcorner_{K} = \int \rho \eta_{y} m_{\mathcal{L}^{d}}(dy) =  \int \rho \frac{d m_{\mathcal{L}^{d}}}{d m_{\mu}} \eta_{y} m_{\mu}(dy),
\]
where $m_{\mu}$ is the quotient measure of $\mu\llcorner_{K}$. 
It follows that the continuity of $\eta_{y}$ implies the continuity of conditional probabilities of $\mu$.
Hence the claim is to prove that $\mathcal{L}^{d}$ satisfies Assumption \ref{A:NDEatom}.

{\it Step 2.} 
Let $K \subset X$ be any compact set with $\mathcal{L}^{d}(K)>0$. 
Possibly intersecting $K$ with  $B_{r}(x)$ for some $x \in \erre^{d} \setminus C$ and $r>0$, 
we can assume w.l.o.g. that $K\subset B_{\ve}(x)$ and $B_{2\ve}(x) \cap M = \emptyset$. 
Since $d_{M}\geq d$, $K_{t}\subset B_{2\ve}(x)$ for all $t\leq\ve$. 
Since $d_{M}= |\cdot|$ in $B_{2\ve}(x)$, 
it follows that inside $B_{2\ve}(x) \times B_{2\ve}(x)$ $d_{M}$-cyclically monotonicity is 
equivalent to $|\cdot|$-cyclically monotonicity. It follows that the set $H \cap G(K) \times G^{-1}(K_{\ve})$ is $|\cdot|$-cyclically monotone.

{\it Step 3.} The following is proved in \cite{biacava:streconv}: consider a metric measure space $(X,d,m)$ with $d$ non-branching geodesic distance, 
 $m \in \mathcal{P}(X)$ and assume that $(X,d,m)$ satisfies $MCP(K,N)$, for the definition of $MCP(K,N)$ we refer to \cite{Sturm:MGH2}. Let $\Gamma$ be a $d$-cyclically monotone set and consider the evolution of sets induced by $\Gamma$, then $m$ satisfies Assumption \ref{A:NDEatom} w.r.t. this evolution of sets. 
 
Since $B_{2\ve}(x)$ is a convex set, it follows that $(B_{2\ve}(x),|\cdot|, \mathcal{L}^{d})$ satisfies $MCP(0,d)$. Therefore 
$\mathcal{L}^{d}$ satisfies Assumption \ref{A:NDEatom} w.r.t. the evolution of sets induced by $H \cap G(K) \times G^{-1}(K_{\ve})$.
The claim follows. 
\end{proof}

Hence we can assume w.l.o.g.  that $\mu(G^{-1}(M)) = \nu (G (M)) = 1$:
if $H(y)$ do not intersect the obstacle, it is a straight line and the marginal $\mu_{y}$ 
is continuous. Since the existence of an optimal transport map on a straight line with first marginal continuous is a standard fact in optimal transportation, the reduction follows.


Recall the two family of sets introduced in Example \ref{ss:esempio}:
\[
K_{y,t} : = G^{-1}(\gamma_{y}(t)) \setminus \bigcap_{s < t} G^{-1}(\gamma_{y}(s)), \qquad 
Q_{y,t} : = G(\gamma_{y}(t)) \setminus \bigcap_{t < s} G(\gamma_{y}(s)).
\]
It follows from Lemma \ref{L:geoH} and Example \ref{ss:esempio} that 
\[
\mu_{y} = \int \mu_{y,t} m_{\mu_{y}}(dt), \quad \nu_{y} = \int \nu_{y,t} m_{\nu_{y}}(dt).
\]
with $\mu_{y,t}(K_{y,t}) =  \nu_{y,t}(Q_{y,t}) =1$. 
Moreover using the increasing curve $\gamma_{y}$, we can assume that  $m_{\mu_{y}} \in \mathcal{P}(M)$, indeed
\begin{equation}\label{E:primaM}
\mu_{y} = \int_{[0,1]} \mu_{y,t} m_{\mu_{y}}(dt) = \int_{\gamma_{y}([0,1])} \mu_{y, \gamma^{-1}_{y}(z)} (\gamma_{y \, \sharp} m_{\mu_{y}})(dz).
\end{equation}
And the same calculation holds true for $\nu_{y}$ and $m_{\nu_{y} } $.
Therefore in the following 
\begin{equation}\label{E:primaM2}
\mu_{y} = \int_{M} \mu_{y,z} m_{\mu_{y}}(dz), \quad \nu_{y} = \int_{M} \nu_{y,z} m_{\nu_{y}}(dz)
\end{equation}
with $\mu_{y,z}( K_{y, \gamma_{y}^{-1}(z)} ) = \nu_{y,z}( Q_{y, \gamma_{y}^{-1}(z)} ) = 1$ and $m_{\mu_{y}}(\gamma_{y}([0,1])) =  m_{\nu_{y}}(\gamma_{y}([0,1]))=1$.

Moreover w.l.o.g. we can assume that $S = f^{y}(\erre^{d}) \subset M$, in particular we can assume that for all  $y \in S$ there exists $t(y) \in [0,1]$ 
such that $y = \gamma_{y}(t(y))$.

According to Proposition \ref{P:alternativa},
to obtain the existence of an optimal map on $H(y)$ it is enough to prove that
$m_{\mu_{y}}$ is continuous and $\mu_{y,z}$ is continuous for $m_{\mu_{y}}$-a.e. $z \in M$.
Recall that $m_{\mu_{y}}( \gamma_{y}([0,t])) \geq m_{\nu_{y}} (\gamma_{y}([0,t]))$ is a straightforward consequence of 
the increasing property of $\gamma_{y}$.

\begin{remark}\label{R:tuttoM}
Consider the following $\mathcal{A}$-measurable map: 
\[
G^{-1}(M) \setminus (a(M) \cap M) \ni w \mapsto  f^{M}(w): = \textrm{Argmin} \{ d(z,w) : z \in M \cap G(w) \} \in M.
\]
Consider the measure $m : = f^{M}_{\sharp} \mu \in \mathcal{P}(M)$. 
Observing that $f^{M}(H(y)) = \gamma_{y}([0,1])$, it follows that 
the support of $m$ is partitioned by a $d_{M}$-cyclically monotone equivalence relation: 
\[
m \Big(  \bigcup_{y \in S}  \gamma_{y}([0,1]) \Big)=1, \qquad  \bigcup_{y \in S}  \gamma_{y}([0,1]) \times \bigcup_{y \in S}  \gamma_{y}([0,1]) \cap G \ \ \textrm{is $d_{M}$-cyclically monotone}
\]
Moreover $f^{y}$ is a quotient map also for this equivalence relation. 
Note that $f^{y}_{\sharp} m = m_{H}$: consider $I \subset S$

\begin{eqnarray*} 
(f^{y}_{\sharp} m )(I) = &~ m \Big(\bigcup_{y \in I}  \gamma_{y}([0,1])  \Big)  =  \mu \Big(  G^{-1}(\bigcup_{y \in I}  \gamma_{y} ([0,1]) ) \Big) \crcr
=&~  \mu \Big( \bigcup_{y \in I} H(y) \Big) = (f^{y}_{\sharp}\mu)(I) = m_{H}(I).
\end{eqnarray*}
It follows that  
\[
m = \int_{S} ( f^{M}_{\sharp} \mu_{y} ) m_{H}(dy)
\]
and from (\ref{E:primaM2})  $f^{M}_{\sharp} \mu_{y} = m_{\mu_{y}}$. 
Hence the final disintegration formula for $m$ is the following one: 
\begin{equation}\label{E:dopoM}
m = \int_{S}  m_{\mu_{y}}  m_{H}(dy).
\end{equation}
\end{remark}

\begin{proposition}\label{P:fondamentale}
The measure $m$ is absolutely continuous w.r.t. the Hausdorff measure $\haus^{d-1}$ restricted to M.
\end{proposition}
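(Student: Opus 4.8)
The plan is to unwind $m=f^M_\sharp\mu$ geometrically and reduce the claim to an area-formula estimate. Since $\mu\ll\elle^d$ and $f^M$ is $\mathcal A$-measurable (so $(f^M)^{-1}(A)$ is universally measurable, and by Borel regularity of $\haus^{d-1}$ we may take $A$ Borel), it suffices to prove that $\elle^d\big((f^M)^{-1}(A)\big)=0$ whenever $\haus^{d-1}(A)=0$; then $m(A)=\mu\big((f^M)^{-1}(A)\big)=0$.

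First I would describe $(f^M)^{-1}(A)$ geometrically. By convexity and smoothness of the obstacle every transport ray consists of an incoming straight segment, a geodesic arc of $M$, and an outgoing straight segment, the two straight parts being tangent to $M$ at the junctions; and since $\Gamma=G=\{(x,y):\varphi(x)-\varphi(y)=d_M(x,y)\}$ with $\varphi$ Lipschitz and $|\nabla_M\varphi|=1$ at $\haus^{d-1}$-a.e.\ point $p$ of $M$ lying on a ray, at such a $p$ the ray direction is the single-valued unit vector $U(p):=-\nabla_M\varphi(p)\in T_pM$, which depends on $p$ alone and, restricted to each chain $H(y)$, equals $\dot\gamma_y$ and is smooth. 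Discarding the $\mu$-negligible sets — the initial points (Lemma \ref{L:appl}), $M$ itself, and the $w$ whose ray leaves $M$ going forward (this set is disjoint from $G^{-1}(M)$, hence $\mu$-null since $\mu(G^{-1}(M))=1$) — $\mu$-a.e.\ $w$ lies on the incoming segment of its ray, so $w=p-t\,U(p)$ with $p=f^M(w)$ and $t=|w-p|>0$. Stratifying by $t\le n$, it thus suffices to show, for each $n\in\N$, that $F_n\big(A\times[0,n]\big)$ is $\elle^d$-null, where $F_n(p,t):=p-t\,U(p)$.

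When $d=2$ this is immediate: $T_pM$ is one-dimensional, so $U$ equals the smooth unit tangent field of $M$ up to a locally constant sign, $F_n$ is Lipschitz, and a Lipschitz image of the $\haus^2$-null set $A\times[0,n]$ is $\elle^2$-null. For $d\ge 3$ the direction field $U=-\nabla_M\varphi$ is a priori only bounded, so $F_n$ cannot be differentiated directly; here I would pass to the disintegration $m=\int_S m_{\mu_y}\,m_H(dy)$ of Remark \ref{R:tuttoM}, reducing to a single chain $H(y)$, where $F_n$ restricts to the smooth map $\Psi_y(t,s):=\gamma_y(t)-s\,\dot\gamma_y(t)$ and $(f^M)^{-1}(A)\cap H(y)=\Psi_y\big(\gamma_y^{-1}(A)\times[0,n]\big)$, a Lipschitz image of a two-dimensional set, hence $\elle^d$-null; one then recovers the global statement by integrating in $y$ against $m_H$ via a coarea disintegration of $\haus^{d-1}\llcorner_M$ along the foliation $\{\gamma_y\}_{y\in S}$ (with conditionals of $\haus^1\llcorner_{\gamma_y}$ type), which is available because $M=\partial C$ is a smooth compact manifold of nonnegative sectional curvature (convexity of $C$ forces $\mathrm{II}\succeq 0$, hence $K_M\ge 0$ by the Gauss equation) on which the intrinsic geodesics are non-branching, so the non-branching/$MCP$ tools already used in Lemma \ref{L:appl} apply on $M$. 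The main obstacle is exactly this last coarea/regularity step — controlling how the $\elle^d$-mass carried by the bundle of tangent segments over $A$ is distributed over $A\subset M$, i.e.\ justifying a change of variables for a ray-direction field that is only Borel. This is where both the smoothness of the obstacle — hence of the tangent-hyperplane family $\{p+T_pM\}_{p\in M}$ and of each geodesic $\gamma_y$ — and the disjointness of transport rays furnished by $d_M$-cyclical monotonicity (which makes $w\mapsto(f^M(w),|w-f^M(w)|)$ $\elle^d$-a.e.\ injective with inverse $F_n$) play the decisive role.
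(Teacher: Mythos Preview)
Your geometric picture is right: $\mu$-a.e.\ point $w$ sits on the incoming straight segment of its transport ray, so $w=p+t\,\nabla_M\varphi(p)$ with $p=f^M(w)$, and the problem reduces to controlling the image of $A\times[0,n]$ under $F_n(p,t)=p+t\,\nabla_M\varphi(p)$. The genuine gap is your treatment of $d\ge 3$. You declare that $U=-\nabla_M\varphi$ is \emph{a priori} only bounded and then try to compensate by disintegrating along the chains $H(y)$. That compensation does not close: from ``$(f^M)^{-1}(A)\cap H(y)$ is $\elle^d$-null for each $y$'' you cannot conclude $\mu\big((f^M)^{-1}(A)\big)=0$ via $\mu=\int\mu_y\,m_H(dy)$, because nothing in the paper gives $\mu_y\ll\elle^d$; and the alternative --- disintegrating $\elle^d$ (or $\haus^{d-1}\llcorner_M$) along the foliation and matching the quotient with $m_H$ --- is exactly the across-ray regularity you are trying to avoid. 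The $MCP$/non-branching tools you invoke on $M$ yield continuity of the conditionals, not that they are $\haus^1\llcorner_{\gamma_y}$, and in any case they say nothing about how $\elle^d$ in the ambient space sits over the leaves. So the ``coarea disintegration'' step is a restatement of the difficulty, not a resolution of it; you yourself flag this as the main obstacle, and it is.

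What the paper does is remove the obstacle at the source: $\nabla_M\varphi$ is in fact \emph{Lipschitz} on the relevant part of $M$, by the transport-ray regularity of Feldman--McCann (Proposition~15 of \cite{feldcann:mani}) applied to the Monge problem on $M$ with the geodesic distance of $M$, for which $\varphi\llcorner_M$ is a Kantorovich potential. With $|\nabla_M\varphi(x)-\nabla_M\varphi(y)|\le L\,d_M(x,y)$, the map $(p,s)\mapsto p+(t+s)\nabla_M\varphi(p)$ is bi-Lipschitz and injective for small $s$, and a direct Area-Formula computation in local charts gives $f^M_\sharp\big(\elle^d\llcorner_B\big)\ll\haus^{d-1}\llcorner_M$ on slabs $B=\{t-\delta\le d(\cdot,M)\le t+\delta\}\cap G^{-1}(M)$; covering by countably many such slabs finishes. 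In particular, once you know $U$ is Lipschitz, your own $F_n$ is Lipschitz on $M\times[0,n]$ and your $d=2$ argument works verbatim in every dimension --- the chain-by-chain detour is unnecessary. The missing idea in your proof is precisely this Lipschitz regularity of the ray-direction field on $M$.
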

\begin{proof}
Recall that $\f \in \lip_{d_{M}}(\erre^{d})$ is the potential associated to $\Gamma$ and consider the following set
\[
M_{2}: = P_{1} \Big(   \{(x,y) \in M\times M : | \f(x)- \f(y) | = d_{M}(x,y) \}  \setminus \{x=y \}  \Big).
\]

{\it Step 1.} Define the following map: $M_{2} \ni w \mapsto \Xi(w) : = \textrm{Argmin}\{ \f(w) - \f(z) : z\in M_{2} \}$.
Then the function $\f$ is a potential for the Monge minimization problem on $M$ with cost the geodesic distance, that coincides with $d_{M}$, 
with first marginal $m$ and as second marginal $\Xi_{\sharp} m$.

It follows from Proposition 15 of \cite{feldcann:mani} 
that $\nabla \f$ is a Lipschitz function: for all $x,y \in M_{2}$
\[
| \nabla \f (x) - \nabla \f (y)| \leq L d_{M}(x,y).
\] 
In \cite{feldcann:mani} the Lipschitz constant $L$ is uniform for $x,y$ belonging to sets uniformly far from the starting and ending points of the geodesics on $M$ of the transport set. Since in our setting the geodesics on $M$ do not intersect, $L$ is uniform on the whole $M$.
Moreover note that if $z = \gamma_{y}(t)$, then 
\[
\nabla \f(z) = - \frac{\dot{\gamma}_{y}(t) }{ |\dot{\gamma}_{y}(t)|}.
\]

{\it Step 2.} 
For $t \geq 0$, define the following map 
\[
M_{2} \ni x \mapsto \psi_{t} (x) : = x + \nabla \f (x) t.
\]
Possibly restricting $\psi_{t}$ to a subset of $M$ of points coming from transport rays of uniformly positive length,
since $ t \mapsto \psi_{t}(x)$ is a parametrization of the transport ray touching $M$ in $x$,
by $d_{M}$-cyclical monotonicity of $\Gamma$, we can assume that $\psi_{t}$ is injective.
Moreover $\psi_{t}$ is bi-Lipschitz, provided $t$ is small enough: indeed
\[
|x + \nabla \f (x) t - y - \nabla \f (y) t| \geq |x- y | (1 - L t).
\]
It follows that 
\[
M_{2} \times [-\delta,\delta] \ni (x,s) \mapsto \psi(x,s) : = x  + \nabla \f (x) (t + s)
\]
is  bi-Lipschitz and injective provided $\delta \leq  1/L + t$. Hence the Jacobian determinant of $\f$,  $J d \f $, is uniformly positive. 

{\it Step 3.}
Consider the following set
\[
B : =   \{ x \in \erre^{d} :  t - \delta \leq  d(M,x)  \leq t + \delta  \} \cap   G^{-1}(M)
\]
where $d$ is the euclidean distance. Clearly $B$ is the range of $\psi$ and $\mathcal{L}^{d}(B)> 0$. 
Since $M$ is a smooth manifold, we can pass to local charts: let $U_{\alpha} \subset \erre^{d-1}$ be an open set and  
$h_{\alpha} : U_{\alpha} \to M$ the corresponding parametrization map.
The map 
\[
U_{\alpha}\times [-\delta,\delta]\ni (x,s) \mapsto \psi_{\alpha}(x,s) : = \psi (h_{\alpha}(x),s)
\]
is a bi-Lipschitz parametrization of the set $B_{\alpha} : = B \cap G^{-1}( h_{\alpha}(U_{\alpha}))$. 

It follows directly from the Area Formula, see for example \cite{ambfuspal:bv}, that 
\[
\mathcal{L}^{d}\llcorner_{B_{\alpha}} = \psi_{\alpha\, \sharp} \Big(  J d \psi_{\alpha}( \mathcal{L}^{d-1}\times dt) \llcorner_{U_{\alpha} \times [-\delta,\delta]}   \Big), 
\]
hence $f^{M}_{\sharp} \mathcal{L}^{d}\llcorner_{B_{\alpha}} \ll \haus^{d-1} \llcorner_{M}$.
Since $B$ can be covered with a finite number of $B_{\alpha}$ and $\mathcal{L}^{d}\llcorner_{B_{\alpha}}$ is equivalent to $m$, the claim
follows.
\end{proof}

Recall the following result.
Let $(M,g)$ be a $n$-dimensional compact Riemannian manifold, let $d_{M}$ be the geodesic distance induced by $g$
and $\eta$ the volume measure.
Then the disintegration of $\eta$ w.r.t. any $d_{M}$-cyclically monotone set is strongly consistent and the conditional probabilities are continuous. 
This result is proved in \cite{biacava:streconv}, Theorem 9.5, in the more general setting of metric measure space satisfying the measure contraction property.

\begin{corollary}\label{C:boh}
For $m_{H}$-a.s. $y \in S$, the quotient measure $m_{\mu_{y}}$ is continuous.
\end{corollary}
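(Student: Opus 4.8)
The plan is to read off the continuity of $m_{\mu_{y}}$ from Proposition~\ref{P:fondamentale} together with the recalled disintegration theorem for the Riemannian volume measure of $M$, by the same absolute-continuity mechanism used in Step~1 of Lemma~\ref{L:appl}.

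First I would reduce to the volume measure of $M$. Because $M$ is a smooth compact $(d-1)$-dimensional submanifold of $\erre^{d}$, the measure $\haus^{d-1}\llcorner_{M}$ is equivalent to the Riemannian volume measure $\eta$ of $(M,g)$, where $g$ is the metric induced by the Euclidean one; and, by convexity of $C$, the restriction of $d_{M}$ to $M\times M$ coincides with the geodesic distance of $(M,g)$, exactly as already used in Step~1 of Proposition~\ref{P:fondamentale}. Hence Proposition~\ref{P:fondamentale} yields $m\ll\eta$, and I write $m=\rho\,\eta$ with $0\le\rho\in\LL^{1}(\eta)$.

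Next I would disintegrate $\eta$ along the same partition. By Remark~\ref{R:tuttoM}, $m$ is concentrated on $E:=\bigcup_{y\in S}\gamma_{y}([0,1])$, the set $E\times E\cap G$ is $d_{M}$-cyclically monotone with equivalence classes the geodesic arcs $\gamma_{y}([0,1])$ (on $M$ geodesics do not branch, so the chain-of-transport-rays relation reduces to $R$), $f^{y}$ is a quotient map for it, and the induced disintegration of $m$ is exactly~(\ref{E:dopoM}), i.e. $m=\int_{S}m_{\mu_{y}}\,m_{H}(dy)$ with $f^{y}_{\sharp}m=m_{H}$. Since $m$ is a probability measure with $m\ll\eta$, necessarily $\eta(E)>0$, so I may apply the recalled theorem (\cite{biacava:streconv}, Theorem~9.5) to the $d_{M}$-cyclically monotone set $E\times E\cap G$: the disintegration of $\eta\llcorner_{E}$ with respect to the partition $\{\gamma_{y}([0,1])\}$ is strongly consistent,
\[
\eta\llcorner_{E}=\int \eta_{y}\,\tilde m(dy),\qquad \eta_{y}(\gamma_{y}([0,1]))=1,
\]
with each $\eta_{y}$ continuous.

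Finally, arguing as in Step~1 of Lemma~\ref{L:appl}, from $m=\rho\,\eta\llcorner_{E}$ and uniqueness of the disintegration I would get $m_{H}\ll\tilde m$, with $\tfrac{dm_{H}}{d\tilde m}(y)=\int\rho\,d\eta_{y}>0$ for $m_{H}$-a.e.\ $y$, and $m_{\mu_{y}}=\bigl(\int\rho\,d\eta_{y}\bigr)^{-1}\rho\,\eta_{y}$ for $m_{H}$-a.e.\ $y\in S$; in particular $m_{\mu_{y}}\ll\eta_{y}$, hence $m_{\mu_{y}}$ is continuous. The substantive ingredients are just Proposition~\ref{P:fondamentale} and the recalled Riemannian disintegration result; the only thing to check is that the partition $\{\gamma_{y}([0,1])\}_{y\in S}$ is the one produced by the $d_{M}$-cyclically monotone set of Remark~\ref{R:tuttoM}, which is built into that remark, so I foresee no real obstacle.
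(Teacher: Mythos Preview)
Your proposal is correct and follows essentially the same approach as the paper's proof. The paper's argument is terse---it simply states that the claim follows from Remark~\ref{R:tuttoM}, Theorem~9.5 of \cite{biacava:streconv}, and Proposition~\ref{P:fondamentale}---leaving implicit exactly the absolute-continuity transfer (Step~1 of Lemma~\ref{L:appl}) that you have spelled out; your version is a faithful, more detailed rendering of the same idea.
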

\begin{proof}
We have proved in Remark \ref{R:tuttoM} that the measures $m_{\mu_{y}}$ are the conditional probabilities of the disintegration of $m$ w.r.t. the equivalence relation 
given by the membership to geodesics $\gamma_{y}$ and $m_{H}$ is the corresponding quotient measure. 
Hence the claim follows directly from Theorem 9.5 of \cite{biacava:streconv} and Proposition \ref{P:fondamentale}.
\end{proof}

\begin{proposition}\label{P:mappaostacolo1}
For $m_{H}$-a.e. $y \in S$, the measures
$\mu_{y,z}$ are continuous 
 for $m_{\mu_{y}}$-a.e. $z \in M$.
\end{proposition}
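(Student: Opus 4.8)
The plan is to reduce the statement, via the disintegration in \eqref{E:primaM2}, to a statement about the $d$-dimensional Lebesgue measure, exactly as in the proof of Lemma \ref{L:appl}. Fix $y \in S$. The measure $\mu_{y}$ lives on $H(y)$, which by Lemma \ref{L:geoH} has the hourglass geometry of Example \ref{ss:esempio}, with the branching locus confined to the increasing curve $\gamma_{y}$ lying on $M$. The disintegration $\mu_{y} = \int_{M} \mu_{y,z} m_{\mu_{y}}(dz)$ sends $z = \gamma_{y}(t)$ to the fiber $K_{y,t} = G^{-1}(\gamma_{y}(t)) \setminus \bigcap_{s<t} G^{-1}(\gamma_{y}(s))$, which is precisely the set of points whose (unique, by convexity/smoothness) transport ray hits $M$ for the first time at $z$. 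Away from $M$ each such fiber is a single Euclidean segment, so that on each chart the fibers of $\{K_{y,t}\}$ foliate a region of $X$ exactly as the transport rays foliate the region $B$ in Step 3 of Proposition \ref{P:fondamentale}.

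First I would, as in Step 1 of Lemma \ref{L:appl}, reduce to showing that the corresponding disintegration of $\mathcal{L}^{d}$ (restricted to a suitable compact piece of $G^{-1}(M)$) has continuous conditional probabilities for $\mathcal{H}^{d-1}$-a.e.\ $z$, since $\mu \ll \mathcal{L}^{d}$ implies $\mu_{y} \ll$ (the relevant $\mathcal{L}^{d}$-conditional) with an $m_{\mu_{y}}$-measurable Radon--Nikodym factor, and the continuity of the conditional probabilities passes from $\mathcal{L}^{d}$ to $\mu$. Then I would invoke the bi-Lipschitz straightening map $\psi_{\alpha}(x,s) = h_{\alpha}(x) + \nabla\f(h_{\alpha}(x))(t+s)$ of Step 2--3 of Proposition \ref{P:fondamentale}, whose Jacobian $Jd\psi_{\alpha}$ is bounded above and below: by the Area Formula this map transports $\mathcal{L}^{d}\llcorner_{B_{\alpha}}$ to (a bounded density times) $\mathcal{H}^{d-1}\llcorner_{M}\otimes dt$ on $U_{\alpha}\times[-\delta,\delta]$. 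Under this change of variables the fibers $K_{y,t}$ become the coordinate slices $\{z\}\times[-\delta,\delta]$, so the conditional probability $\mu_{y,z}$ of $\mathcal{L}^{d}$ along the fiber through $z$ is, up to the bounded Jacobian factor, one-dimensional Lebesgue measure on a segment — which is continuous (atomless). Hence $\mu_{y,z}$ is continuous for $m_{\mu_{y}}$-a.e.\ $z$.

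Two points need care, and the second is the main obstacle. The minor point is measurability in $y$: all the sets $K_{y,t}$, the curves $\gamma_{y}$, and the straightening maps must depend measurably on $y \in S$ so that the "for $m_{H}$-a.e.\ $y$" quantifier is legitimate; this follows from the analyticity of $G$ and of $H$ together with Von Neumann selection (Theorem \ref{T:vanneuma}), along the lines already used to extract $f^{y}$ and $\gamma_{y}$. The main obstacle is handling the part of $H(y)$ near $M$ itself, where the segment structure of the fibers degenerates: a fiber $K_{y,t}$ that meets $M$ contains, besides an incoming segment, possibly a sub-arc of the geodesic network on $M$ and an outgoing segment, so the straightening of Proposition \ref{P:fondamentale} applies only off a neighborhood of $M$. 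To deal with this I would exhaust $X \setminus M$ by the sets $B = \{t-\delta \le d(M,\cdot) \le t+\delta\} \cap G^{-1}(M)$ for $t>0$ as in Step 3 there, apply the argument on each such $B$, and note that $\bigcup_{t>0} B$ carries full $\mu_{y}$-mass once we have reduced (as in Lemma \ref{L:appl}, using Assumption \ref{A:NDEatom} for $\mathcal{L}^{d}$) to the case $\mu_{y}(M)=0$; the residual contribution of the on-manifold arcs is then $\mu_{y}$-negligible and does not affect atomlessness of the conditionals.
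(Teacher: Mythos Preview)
Your ingredients are right, but you take a detour that creates the two difficulties you then have to defuse. The paper's route is shorter: it never fixes $y$. Observe that the family of fibers $\{K_{y,t}\}_{y\in S,\,t\in[0,1]}$ is nothing but the partition of $G^{-1}(M)$ into level sets of the first-hitting map $f^{M}$ of Remark~\ref{R:tuttoM}; each fiber is a straight Euclidean segment, and $G$ restricted to $G^{-1}(M)\setminus a(M)$ is $|\cdot|$-cyclically monotone. One therefore disintegrates $\mu$ \emph{once, globally}, by $f^{M}$:
\[
\mu=\int_{M}\mu_{z}\,m(dz),
\]
and the Euclidean non-branching theory (the same mechanism already used in Lemma~\ref{L:appl}) gives $\mu_{z}$ continuous for $m$-a.e.\ $z$. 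This is exactly the content of your bi-Lipschitz computation, but read globally rather than inside a single $H(y)$. The second step is the one your write-up skips: by uniqueness of the disintegration, the iterated formula $\mu=\int_{S}\bigl(\int_{M}\mu_{y,z}\,m_{\mu_{y}}(dz)\bigr)m_{H}(dy)$ must coincide with the global one, which (using $f^{M}_{\sharp}\mu_{y}=m_{\mu_{y}}$ from Remark~\ref{R:tuttoM}) forces $\mu_{y}=\int_{M}\mu_{z}\,m_{\mu_{y}}(dz)$ and hence $\mu_{y,z}=\mu_{z}$ for $m_{H}$-a.e.\ $y$ and $m_{\mu_{y}}$-a.e.\ $z$.

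Your argument, as written, establishes continuity of the global ray-conditionals --- this is what the straightening $\psi_{\alpha}$ actually yields, since $\psi_{\alpha}$ parametrizes all of $B_{\alpha}$, not $B_{\alpha}\cap H(y)$ --- but the passage from these to the doubly conditional $\mu_{y,z}$ is precisely the uniqueness step above, which you do not invoke. (Note also that for $d\geq 3$ each $H(y)$ is $\mathcal{L}^{d}$-null, so ``disintegrate $\mathcal{L}^{d}$ inside a fixed $H(y)$'' is not a well-posed reduction; only the global disintegration makes sense.) Once the uniqueness step is inserted, both of your flagged obstacles disappear: the $m_{H}$-a.e.\ quantifier is delivered automatically by the global disintegration, with no measurable selection in $y$ needed, and the near-$M$ degeneracy is absorbed by $\mu(M)=0$ exactly as you say.
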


\begin{proof}
Recall that $f^{M}_{\sharp} \mu = m$.

{\it Step 1.} 
The measure $\mu$ can be disintegrated w.r.t. the partition given by the family of pre-images of the $\mathcal{A}$-measurable map $f^{M}$:
$\{ (f^{M})^{-1}(p) \}_{p \in f^{M}(\erre^{d})}$. Clearly $f^{M}$  is a possible quotient map, hence
\begin{equation}\label{E:conto}
\mu = \int \mu_{z} m(dz),
\end{equation}

The set $G^{-1}(M)\setminus a(M) \times G^{-1}(M)\setminus a(M) \cap G$ is $|\cdot |$-cyclically monotone and $\mu \ll \mathcal{L}^{d}$, 
hence it follows that for $m$-a.e. $z \in f^{M}(\erre^{d})$, $\mu_{z}$ is continuous.

{\it Step 2.} 
From Lemma \ref{L:appl} $\mu = \int \mu_{y} m_{H}(dy)$, therefore
\[
m = f^{M}_{\sharp}\mu = \int (f^{M}_{\sharp}\mu_{y}) m_{H}(dy),
\] 
hence using (\ref{E:conto}) and the uniqueness of the disintegration
\[
\mu = \int \bigg( \int \mu_{z} (f^{M}_{\sharp} \mu_{y})(dz)  \bigg) m_{H}(dy), \qquad \mu_{y} = \int \mu_{z} (f^{M}_{\sharp} \mu_{y})(dz), 
\]
where the last equality holds true for $m_{H}$-a.e. $y \in S$. 
Hence for $m_{H}$-a.e. $y \in S$ the measures $\mu_{y,z}$ are continuous for $m_{\mu_{y}}$-a.e. $z \in M$.
\end{proof}

Finally we can prove the existence of an optimal map for the Monge minimization problem with obstacle.

\begin{theorem}
There exists a solution for the Monge minimization problem with cost $d_{M}$ and marginal $\mu, \nu$ with $\mu \ll \mathcal{L}^{d}$. 
\end{theorem}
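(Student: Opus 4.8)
The plan is to assemble the results of this section and run them through the reduction machinery of Section~\ref{S:Solution}. First I would use Lemma~\ref{L:cont}: since $C$ is compact and convex, $X$ (the closure of $\erre^d\setminus C$) is connected, so $d_M$ is finite and continuous on $X\times X$. Hence $d_M$ is lower semicontinuous with finite optimal cost, and Theorem~\ref{T:kanto} yields an optimal transference plan $\pi\in\Pi(\mu,\nu)$ concentrated on a closed $d_M$-cyclically monotone set $\Gamma$, with $\Gamma=\Gamma'=G=\{(x,y):\varphi(x)-\varphi(y)=d_M(x,y)\}$ for a Lipschitz potential $\varphi$. From now on $R$, $\mathcal T$, $\mathcal T_e$, $a$, $b$, $H$ are the structures attached to this $\pi$.

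Next I would verify the hypotheses of the reduction scheme. By Remark~\ref{O:sempre}, $\Gamma$ satisfies Assumption~\ref{A:assu1}, so Proposition~\ref{P:sicogrF} gives a $\mu$-measurable cross-section $f^y$ of $H$ and strongly consistent disintegrations of $\mu$, $\nu$, $\pi$ along $H$. By Lemma~\ref{L:appl}, the set of initial points is $\mu$-negligible (equivalently $\mu(\mathcal T_e\setminus\mathcal T)=0$) and the conditional probabilities $\mu_y$ are continuous for $m_H$-a.e.\ $y$; hence $\mu=\int\mu_y\,m_H(dy)$, $\nu=\int\nu_y\,m_H(dy)$, $\pi=\int\pi_y\,m_H(dy)$ with $\pi_y\in\Pi(\mu_y,\nu_y)$ optimal for $m_H$-a.e.\ $y$. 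The classes $H(y)$ not meeting the obstacle are single segments with $\mu_y$ continuous, so the elementary one-dimensional argument already produces an optimal $T_y$ there; thus I may assume $\mu(G^{-1}(M))=\nu(G(M))=1$ and concentrate on the classes touching $M$.

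For those classes, Lemma~\ref{L:geoH} shows $H(y)$ has the geometry of Example~\ref{ss:esempio}, so it satisfies Assumption~\ref{A:clessidra3} with the families $K_{y,t}$, $Q_{y,t}$ built from an increasing curve $\gamma_y$; this gives the further disintegrations $\mu_y=\int\mu_{y,z}\,m_{\mu_y}(dz)$, $\nu_y=\int\nu_{y,z}\,m_{\nu_y}(dz)$ over $M$, and the increasing property of $\gamma_y$ yields $m_{\mu_y}(\gamma_y([0,t]))\geq m_{\nu_y}(\gamma_y([0,t]))$. Corollary~\ref{C:boh} gives that $m_{\mu_y}$ is continuous and Proposition~\ref{P:mappaostacolo1} gives that $\mu_{y,z}$ is continuous for $m_{\mu_y}$-a.e.\ $z$, so for $m_H$-a.e.\ $y$ all three hypotheses of Proposition~\ref{P:alternativa} hold; this produces a $d_M$-cyclically monotone, $\mu_y$-measurable map $T_y$ with $T_{y\,\sharp}\mu_y=\nu_y$ and $\int d_M(x,T_y(x))\,\mu_y(dx)=\int d_M(x,z)\,\pi_y(dx\,dz)$. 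Finally, discarding a saturated $m_H$-negligible set and applying Theorem~\ref{T:finale}, the maps $T_y$ glue to a $\mu$-measurable $T:X\to X$ with $T_\sharp\mu=\nu$ and $\int d_M(x,T(x))\,\mu(dx)=\int d_M(x,z)\,\pi(dx\,dz)$; extending by the identity off $\mathcal T_e$ as in~(\ref{E:extere}) preserves cost and marginals, and since $\pi$ is optimal, $T$ minimizes the Monge functional.

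The only genuinely delicate point in this assembly — and the reason the whole of Section~\ref{S:application} is needed — is ensuring that \emph{every} hypothesis of Proposition~\ref{P:alternativa} holds simultaneously for $m_H$-a.e.\ $y$: the continuity of the nested conditionals $\mu_{y,z}$ relies on the absolute continuity of $m$ on $M$ (Proposition~\ref{P:fondamentale}, via the Feldman–McCann Lipschitz regularity of $\nabla\varphi$ and the Area Formula) together with the continuity result for Riemannian volume under $d_M$-monotone disintegrations, and on the compatibility of the two successive disintegrations of $\mu$ (via $H$ and then via $f^M$). Everything else is bookkeeping that has already been carried out in the preceding lemmas.
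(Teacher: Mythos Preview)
Your proposal is correct and follows essentially the same route as the paper: disintegrate via $H$ (Lemma~\ref{L:appl} and Remark~\ref{O:sempre}), reduce to the classes touching $M$, verify Assumption~\ref{A:clessidra3} via Lemma~\ref{L:geoH}, check the hypotheses of Proposition~\ref{P:alternativa} through Proposition~\ref{P:fondamentale}, Corollary~\ref{C:boh} and Proposition~\ref{P:mappaostacolo1}, and glue with Theorem~\ref{T:finale}. Your write-up is in fact more explicit than the paper's own short proof, but the logical skeleton is identical.
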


\begin{proof}
From Lemma \ref{L:appl} it follows that $\mu$ can be disintegrated w.r.t. the equivalence relation $H$.
From Theorem \ref{T:finale} it follows that to prove the claim it is enough to prove the existence of an optimal map on each equivalence class $H(y)$.
Hence we restrict the analysis to the classes $H(y)$ such that $H(y)\neq R(y)$ and for them we proved in 
Lemma \ref{L:geoH} that Assumption \ref{A:clessidra3} holds true. 
In Proposition \ref{P:fondamentale}, Corollary \ref{C:boh} and Proposition \ref{P:mappaostacolo1} we proved that 
for $m_{H}$-a.e. $y \in S$ the measures $m_{\mu_{y}}$ and $\mu_{y,z}$ verify the hypothesis of Proposition \ref{P:alternativa}. 
Therefore the claim follows.
\end{proof}

\medskip
\centerline{\sc Acknowledgements}
\vspace{2mm}
I would like to express my gratitude to Stefano Bianchini for his support during the preparation of this paper. 
I wish to thank Luigi De Pascale for drawing my attention on the obstacle problem.

\appendix

\section{Notation}
\label{S:notation}

\begin{tabbing}
\hspace{4cm}\=\kill
$P_{i_1\dots i_I}$ \> projection of $x \in \Pi_{k=1,\dots,K} X_k$ into its $(i_1,\dots,i_I)$ coordinates, keeping order
\\
$\mathcal{P}(X)$ or $\mathcal{P}(X,\Omega)$ \> probability measures on a measurable space $(X,\Omega)$
\\
$\mathcal{M}(X)$ or $\mathcal{M}(X,\Omega)$ \> signed measures on a measurable space $(X,\Omega)$
\\
$f \llcorner_A$ \> the restriction of the function $f$ to $A$
\\
$\mu \llcorner_A$ \> the restriction of the measure $\mu$ to the $\sigma$-algebra $A \cap \Sigma$
\\
$\mathcal{L}^d$ \> Lebesgue measure on $\R^d$
\\
$\mathcal{H}^k$ \> $k$-dimensional Hausdorff measure
\\
$\Pi(\mu_1,\dots,\mu_I)$ \> $\pi \in \mathcal{P}(\Pi_{i=1}^I X_i, \otimes_{i=1}^I \Sigma_i)$ with marginals $(P_i)_\sharp \pi = \mu_i \in \mathcal{P}(X_i)$
\\
$\mathcal{I}(\pi)$ \> cost functional (\ref{E:Ifunct})
\\
$c$ \> cost function $ : X \times Y \mapsto [0,+\infty]$
\\
$\mathcal{I}$ \> transportation cost (\ref{E:Ifunct})
\\
$\phi^c$ \> $c$-transform of a function $\phi$ (\ref{E:ctransf})
\\
$\partial^c \f$ \> $d$-subdifferential of $\f$ (\ref{E:csudiff})
\\
$\Phi_c$ \> subset of $L^1(\mu) \times L^1(\nu)$ defined in (\ref{E:Phicset})
\\
$J(\phi,\psi)$ \> functional defined in (\ref{E:Jfunct})
\\
$C_b$ or $C_b(X,\R)$ \> continuous bounded functions on a topological space $X$
\\
$(X,d)$ \> Polish space
\\
$(X,d_L)$ \> non-branching geodesic separable metric space
\\
$D_N(x)$ \> the set $\{y : d_N(x,y) < +\infty\}$
\\
$L(\gamma)$ \> length of the Lipschitz curve $\gamma$, Definition \ref{D:lengthstr}
\\
$B_r(x)$ \> open ball of center $x$ and radius $r$ in $(X,d)$
\\
$B_{r,L}(x)$ \> open ball of center $x$ and radius $r$ in $(X,d_L)$
\\
$\mathcal{K}(X)$ \> space of compact subsets of $X$
\\
$d_H(A,B)$ \> Hausdorff distance of $A$, $B$ w.r.t. the distance $d$
\\
$A_x$, $A^y$ \> $x$, $y$ section of $A \subset X \times Y$ (\ref{E:sectionxx})
\\
$\mathcal{B}$, $\mathcal{B}(X)$ \> Borel $\sigma$-algebra of $X$ Polish
\\
$\Sigma^1_1$, $\Sigma^1_1(X)$ \> the pointclass of analytic subsets of Polish space $X$, i.e.~projection of Borel sets
\\
$\Pi^1_1$ \> the pointclass of coanalytic sets, i.e.~complementary of $\Sigma^1_1$
\\
$\Sigma^1_n$, $\Pi^1_n$ \> the pointclass of projections of $\Pi^1_{n-1}$-sets, its complementary
\\
$\Delta^1_n$ \> the ambiguous class $\Sigma^1_n \cap \Pi^1_n$
\\
$\mathcal{A}$ \> $\sigma$-algebra generated by $\Sigma^{1}_{1}$
\\
$\mathcal{A}$-function \> $f : X \to \R$ such that $f^{-1}((t,+\infty])$ belongs to $\mathcal A$
\\
$h_\sharp \mu$ \> push forward of the measure $\mu$ through $h$, $h_\sharp \mu(A) = \mu(h^{-1}(A))$
\\
$\textrm{graph}(F)$ \> graph of a multifunction $F$ (\ref{E:graphF})
\\
$F^{-1}$ \> inverse image of multifunction $F$ (\ref{E:inverseF})
\\
$F_x$, $F^y$ \> sections of the multifunction $F$ (\ref{E:sectionxx})
\\
$\mathrm{Lip}_1(X)$ \> Lipschitz functions with Lipschitz constant $1$
\\
$\Gamma'$ \> transport set (\ref{E:gGamma})
\\
$G$, $G^{-1}$ \> outgoing, incoming transport ray, Definition \ref{D:Gray}
\\
$R$ \> set of transport rays (\ref{E:Rray})
\\
$\mathcal{T}$, $\mathcal{T}_e$ \> transport sets (\ref{E:TR0})
\\
$a,b : \mathcal{T}_e \to \mathcal{T}_e$ \> endpoint maps (\ref{E:endpoint0})
\\
$\mathcal{Z}_{m,e}$, $\mathcal Z_m$ \> partition of the transport set $\Gamma$ (\ref{E:Zkije}), (\ref{E:mapTijkF})
\\
$\mathcal S$ \> cross-section of $R \llcorner_{\mathcal T \times \mathcal T}$
\\
$A_t$ \> evolution of $A \subset \mathcal{Z}_{k,i,j}$ along geodesics (\ref{E:At})

\end{tabbing}

\bibliography{biblio}

\end{document}